\newtheorem{theorem}{Theorem}
\newtheorem{lemma}{Lemma}
\newtheorem{corollary}[theorem]{Corollary}
\newtheorem{remark}{Remark}
\newtheorem*{definition}{Definition}
\newtheorem{conjecture}{Conjecture}
\renewcommand*\backref[1]{}
\renewcommand*\backrefalt[4]{ \ifcase #1 \or (cited on page #2) \else (cited on pages #2) \fi}
\newcommand{\be}{\begin{equation}}
\newcommand{\ee}{\end{equation}}
\newcommand{\bea}{\begin{eqnarray}}
\newcommand{\eea}{\end{eqnarray}}
\newcommand{\vs}{\vspace{0.5cm}}
\def\XXint#1#2#3{{\setbox0=\hbox{$#1{#2#3}{\int}$ }
\vcenter{\hbox{$#2#3$ }}\kern-.6\wd0}}
\begin{document}

\title[Fino-Vezzoni conjecture on Lie algebras with abelian ideals of codimension two]{Fino-Vezzoni conjecture on Lie algebras with abelian ideals of codimension two}

\author{Kexiang Cao}
\address{Kexiang Cao. School of Mathematical Sciences, Chongqing Normal University, Chongqing 401331, China}
\email{{ caokx1214@qq.com}}\thanks{The corresponding author Zheng is partially supported by National Natural Science Foundations of China
with the grant No.12071050 and 12141101, Chongqing grant cstc2021ycjh-bgzxm0139 and CQYC2021059145, and is supported by the 111 Project D21024.}

\author{Fangyang Zheng}
\address{Fangyang Zheng. School of Mathematical Sciences, Chongqing Normal University, Chongqing 401331, China}
\email{20190045@cqnu.edu.cn; franciszheng@yahoo.com} \thanks{}

\subjclass[2020]{53C55 (primary), 53C05 (secondary)}
\keywords{Hermitian manifolds; balanced metrics; pluriclosed metrics; Fino-Vezzoni Conjecture}

\begin{abstract}
In this paper, we confirm the Fino-Vezzoni Conjecture for unimodular Lie algebras which contain abelian ideals of codimension two, a natural generalization to the class of almost abelian Lie algebras. This provides new evidence towards the validity of the conjecture on a very special type of $3$-step solvmanifolds. 
\end{abstract}

\maketitle

\tableofcontents

\section{Introduction and statement of results}\label{intro}

An important open question in non-K\"ahler geometry is the following interesting conjecture raised by Fino and Vezzoni (\cite{FV15, FV16}):
\begin{conjecture}[{\bf Fino-Vezzoni}] \label{conj1}
Let $M^n$ be a compact complex manifold. If $M^n$ admits a balanced Hermitian metric and a pluriclosed Hermitian metric, then it must admit a K\"ahler metric. 
\end{conjecture}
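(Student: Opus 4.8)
The plan is to argue by contradiction and to exploit the fact that the balanced and pluriclosed conditions are, in a precise sense, dual weakenings of the K\"ahler condition, so that their simultaneous presence should collapse both weakenings. Write $\omega_b$ for the balanced metric, so that $\omega_b^{n-1}$ is a strictly positive, $d$-closed $(n-1,n-1)$-form and in particular $\partial(\omega_b^{n-1})=0$, and write $\omega_p$ for the pluriclosed metric, so that $\partial\bar\partial\omega_p=0$. The first tool I would bring to bear is the analytic integration-by-parts identity that already settles the \emph{single-metric} case, namely that a Hermitian metric which is simultaneously balanced and pluriclosed is K\"ahler: there $\partial(\omega^{n-1})=0$ forces $\partial\omega$ to be a primitive $(2,1)$-form, and integrating $\partial\bigl(\omega\wedge\bar\partial\omega\wedge\omega^{n-3}\bigr)$ over $M^n$ (an $(n-1,n)$-form, so the integral of its $\partial$ vanishes by Stokes) kills the interior term through $\partial\bar\partial\omega=0$ and converts the remainder, via the Hodge identity for primitive forms, into a multiple of $\int_M|\partial\omega|^2\,dV$, which must then vanish. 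My first step would be to search for a genuine two-metric version of this identity, probing integrals such as $\int_M \partial\bar\partial\omega_p\wedge\omega_b^{n-2}$ and $\int_M \partial\bigl(\omega_p\wedge\bar\partial\omega_p\wedge\omega_b^{n-3}\bigr)$, in which the pluriclosed form supplies the vanishing of $\partial\bar\partial\omega_p$ and the balanced form supplies $\partial(\omega_b^{n-1})=0$, with the aim of forcing $\partial\omega_p=0$ or $\partial\omega_b=0$.

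In parallel I would erect the global, current-theoretic framework, which converts the statement into a comparison of positive cones. By the Harvey--Lawson theorem, ``$M^n$ carries no K\"ahler metric'' is equivalent to the existence of a nonzero positive $d$-exact current of the appropriate bidimension; by Michelsohn's duality the existence of $\omega_b$ excludes nonzero positive $\partial\bar\partial$-exact $(1,1)$-currents; and the existence of $\omega_p$ forces the Aeppli class $[\omega_p]\in H^{1,1}_A(M)$ to be positive. Using the Bott--Chern/Aeppli duality $H^{1,1}_A(M)\cong H^{n-1,n-1}_{BC}(M)^\ast$, the balanced class $[\omega_b^{n-1}]$ pairs against $[\omega_p]$ with strictly positive value, and the goal is to show that the pluriclosed metric forces the hypothetical K\"ahler-obstructing current to be, simultaneously, of the type excluded by $\omega_b$, yielding the contradiction. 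Should one be able to establish the $\partial\bar\partial$-lemma, or even a weaker Bott--Chern/de Rham comparison, on the manifolds under consideration, the positive Aeppli class of $\omega_p$ could be transferred to a closed positive $(1,1)$-form and the argument would close immediately.

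The main obstacle, and the reason the conjecture remains open in full generality, is exactly that the balanced and pluriclosed metrics need not coincide. The single-metric integral identity does not survive the substitution of two distinct metrics: the Leibniz expansion of $\partial\bigl(\omega_p\wedge\bar\partial\omega_p\wedge\omega_b^{n-3}\bigr)$ generates cross terms in which the torsions of $\omega_b$ and $\omega_p$ are entangled and no longer carry a definite sign, so the computation fails to reduce to a single nonnegative quantity; and without the $\partial\bar\partial$-lemma one cannot pass freely between the Aeppli class of $\omega_p$ and the Bott--Chern class of $\omega_b^{n-1}$. I therefore expect the decisive step to be the construction of a distinguished Hermitian metric built canonically from $\omega_b$ and $\omega_p$ --- for instance as a critical point of a natural functional, or as a limit of a geometric flow coupling the balanced and pluriclosed conditions --- for which the torsion cross terms regain a sign and the two-metric problem collapses onto the tractable single-metric identity. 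Controlling such a construction, and guaranteeing that the resulting metric retains both the balanced and the pluriclosed features, is where the genuine difficulty lies.
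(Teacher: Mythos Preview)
The statement you are attempting to prove is the Fino--Vezzoni Conjecture itself, which is \emph{open}; the paper states it as Conjecture~\ref{conj1} and does not claim a proof. What the paper actually proves is the special case of Theorem~\ref{thm2} (and, combined with Theorem~\ref{thm1}, Corollary~\ref{cor3}): the conjecture holds on unimodular Lie algebras containing an abelian ideal of codimension two. So there is no ``paper's own proof'' of this statement to compare against.

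Your proposal is, by your own admission, not a proof: the two-metric integration-by-parts identity you look for does not close (the cross terms in $\partial\bigl(\omega_p\wedge\bar\partial\omega_p\wedge\omega_b^{n-3}\bigr)$ have no sign), and the current-theoretic argument requires a $\partial\bar\partial$-type comparison that is simply unavailable in general. These are exactly the known obstructions, and nothing in your outline circumvents them; the suggestion of constructing a canonical intermediate metric via a flow or variational principle is a hope, not a step. So as a proof attempt this has a genuine gap at the decisive point.

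It may still be worth noting the contrast with what the paper \emph{does} do. The paper's method is purely algebraic and has no overlap with your analytic/current-theoretic framework: one fixes an admissible unitary frame adapted to the abelian ideal ${\mathfrak a}$, writes the balanced and pluriclosed conditions as explicit polynomial constraints on the structure constants $C^j_{ik}$, $D^j_{ik}$, and then shows case-by-case (according to whether $J{\mathfrak a}={\mathfrak a}$, and whether ${\mathfrak g}/{\mathfrak a}$ is abelian) either that the two sets of constraints are incompatible, or that their joint solution forces enough vanishing to build a K\"ahler metric by hand. The key algebraic input is that certain commutator relations (e.g.\ $[Y_1,Y_2]=cY_1$ with $c\neq 0$) force nilpotency, which in turn pins down signs of invariants such as $\sigma b$ or $\det(A_x)$ that the balanced and pluriclosed conditions push in opposite directions. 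None of this generalizes beyond the Lie-algebra setting, and conversely your analytic outline gives no leverage in that setting because on a Lie group the averaging trick already reduces everything to constant-coefficient linear algebra.
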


Recall that a Hermitian metric $g$ on $M^n$ is said to be {\em balanced} if $d(\omega^{n-1})=0$, and $g$ is said to be {\em pluriclosed} if $\partial \overline{\partial} \omega =0$.  Here $\omega$ is the K\"ahler form of $g$. Balanced metrics and pluriclosed metrics are natural extensions of K\"ahler metrics, and they are two of the special Hermitian structures that are extensively studied. See \cite{FGV, Fu, LU, Paradiso, ZZ-JGP} for a sample of discussion on balanced metrics and see \cite{AL, EFV, FinoTomassini, ST, STian} for  pluriclosed metrics.

The aforementioned conjecture simply says that these two types of special Hermitian metrics cannot coexist on any compact complex manifold, unless the manifold is K\"ahlerian (i.e., admitting a  K\"ahler metric). 

A few quick remarks about the conjecture. The first one is that the dimension $n$ starts with $3$ here,  as in $n=2$ the balanced condition coincides with the K\"ahler condition, which is $d\omega =0$. Also, the compactness assumption in the conjecture is necessary, and there are counterexamples in the non-compact case. Finally, if a Hermitian metric $g$ is both balanced and pluriclosed, then it must be K\"ahler by a theorem of Alexandrov and Ivanov \cite{AI}. So the main difficulty of the conjecture is caused by the fact that the balanced metric and the pluriclosed metric in the assumption are allowed to be  different. 

Since its inception, the conjecture has drawn attention from many complex geometers and it has been confirmed in a number of special situations. For example, Verbitsky in \cite{Verbitsky} showed that any twistor space of non-K\"ahlerian type cannot admit any pluriclosed metric, so the Fino-Vezzoni Conjecture holds for all twistor spaces. In \cite{Chiose},  Chiose proved that the conjecture holds for manifolds in the Fujiki ${\mathcal C}$ class, which means compact complex manifolds that are bimeromorphic to compact K\"ahler manifolds. Fu, Li, and Yau in \cite{FuLiYau} proved that the conjecture holds for a special type of non-K\"ahler Calabi-Yau threefolds. Also, Fei \cite{Fei} confirmed the conjecture for generalized Calabi-Gray manifolds, and Otiman \cite{Otiman} proved  that the conjecture  holds for all Oeljeklaus-Toma manifolds. 

In \cite{ZZ-pluriclosed}, Zhao and the second named author proved that the conjecture holds on any compact non-K\"ahler Bismut K\"ahler-like (BKL) manifolds. Here BKL means that the curvature tensor of the Bismut connection obeys all K\"ahler symmetries. Such a metric is always pluriclosed, and when a compact complex manifold admits a non-K\"ahler BKL metric, then it cannot admit any balanced metric.

Next we consider {\em Lie-complex manifolds,} which means compact complex manifolds with universal cover $(G,J)$, where $G$ is a Lie group and $J$ a left-invariant complex structure on $G$. So Lie-complex manifolds are in the form $M=G/\Gamma$ where $\Gamma$ is a discrete subgroup in the automorphism group of $(G,J)$. A commonly occurred special case is when $\Gamma$ is a discrete subgroup of $G$ (acting as left-multiplications on $G$) with smooth compact quotient (i.e., uniform lattice). When $G$ is nilpotent or solvable, $M$ is  called a nilpotent Lie-complex manifold or solvable Lie-complex manifold. In this article, we will simply call it a {\em nilmanifold}  or {\em solvmanifold}. 

Nilpotent groups  admit uniform lattices if and only if they have rational structure constants by the result of Malcev. But for a given solvable group, whether it admits a uniform lattice or not is often a challenging problem in Lie group theory. See \cite{Bock, CM, Garland} as a glimpse of examples. Also, given an even-dimensional Lie group, whether or not it admits a left-invariant complex structure might be complicated as well and it depends on the structure of the Lie group. Clearly, left-invariant complex  structures on $G$ are in one-one correspondence with the {\em complex structures} on its Lie algebra ${\mathfrak g}$, namely, linear maps $J: {\mathfrak g}\rightarrow  {\mathfrak g}$ satisfying $J^2=-I$ and the integrability condition
\begin{equation} \label{integrability}
[x,y] - [Jx,Jy] + J\,[Jx, y] + J\,[x,Jy] = 0, \ \ \ \ \ \forall \ x,y \in {\mathfrak g} 
\end{equation}

By the work of Fino-Grantcharov \cite{FG04}  and Ugarte \cite{Ugarte}, when the complex structure is left-invariant, if the compact quotient $M$ admits a balanced metric, then through the averaging process it also admits a balanced metric that is left-invariant on $G$. Similarly, if $M$ admits a pluriclosed metric, then it also admits a pluriclosed metric whose lift on $G$ is left-invariant. So in order to verify the Fino-Vezzoni Conjecture for a Lie-complex manifold, it suffices to consider {\em Hermitian structures} $(J, g)$ on the Lie algebra ${\mathfrak g}$, where $J$ is a complex structure and $ g = \langle , \rangle $  is an inner product on ${\mathfrak g}$ compatible with $J$ (i.e., making $J$ orthogonal). 

In \cite{FV15}, Fino and Vezzoni proved that their conjecture  holds for all $6$-dimensional nilpotent groups and all $6$-dimensional solvable groups of Calabi-Yau type. In \cite{FV16}, they confirmed the conjecture  for all $2$-step nilpotent groups in any (even) dimension. They also conjectured that if a nilpotent group admits a pluriclosed metric, then it must be of step at most $2$. This last statement was proved recently by Arroyo and Nicolini in \cite{AN}. So in particular we know that Conjecture \ref{conj1} holds for all nilmanifolds.

Beyond nilmanifolds, \cite{FGV} and \cite{Podesta} confirmed the conjecture for all compact semi-simple Lie groups. Giusti and Podest\`a  \cite{GPodesta} proved the conjecture for all regular complex structures on non-compact real semi-simple Lie groups. In their recent work \cite{FP1, FP2, FP3}, Fino and Paradiso confirmed the conjecture in the {\em almost abelian} case (meaning that ${\mathfrak g}$ contains an abelian ideal of codimension $1$) and the {\em almost nilpotent solvable} case (meaning that the nilradical of ${\mathfrak g}$ has $1$-dimensional commutator). In \cite{FSwann22, FSwann}, Freibert and Swann systematically studied the Hermitian structures on $2$-step  solvable Lie algebras, and they proved that Conjecture \ref{conj1} holds for a large subset, namely for those that are `pure type'.

As a natural generalization to the almost abelian case, one could consider those Lie algebras ${\mathfrak g}$ which contain an abelian ideal ${\mathfrak a}$ of codimension $2$. Suppose $J$ is a complex structure on ${\mathfrak g}$. Then ${\mathfrak a}_J= {\mathfrak a} \cap J {\mathfrak a}$ is an ideal in ${\mathfrak g}$ of codimension equal to either $2$ or $4$, depending on whether $J {\mathfrak a} = {\mathfrak a}$ or not. Of course the Hermitian geometric study in the first case (i.e., when ${\mathfrak a}$ is $J$-invariant) is much easier.  

In \cite{GuoZ}, Guo and the second named author studied Hermitian structures on ${\mathfrak g}$ with $J$-invariant ${\mathfrak a}$ and obtained characterizations for balanced, pluriclosed, and other special types of Hermitian metrics. Utilizing this, Li and the second named author in \cite{LiZ} confirmed Conjecture \ref{conj1} in this special case:

\begin{theorem}[Li-Z\,\cite{LiZ}] \label{thm1}
Let ${\mathfrak g}$ be a unimodular Lie algebra which contains an abelian ideal ${\mathfrak a}$ of codimension $2$, and $J$ a complex structure on ${\mathfrak g}$ satisfying $J {\mathfrak a} = {\mathfrak a}$. Then the Fino-Vezzoni Conjecture holds, namely, if $({\mathfrak g} ,J)$ admits compatible metrics $g$ and $h$ so that $g$ is balanced and $h$ is pluriclosed, then it must admit a compatible K\"ahler metric. 
\end{theorem}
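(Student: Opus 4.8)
The plan is to pass to the Lie algebra, normalize the structure equations using the abelian $J$-invariant ideal, show that the pluriclosed metric forces ${\mathfrak g}$ to be either abelian or a Heisenberg-type algebra, and then rule out the Heisenberg case using the balanced metric.

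\textbf{Step 1: reduction and structure equations.} By the averaging results of Fino--Grantcharov \cite{FG04} and Ugarte \cite{Ugarte} recalled above, it suffices to exhibit a compatible left-invariant K\"ahler form on $({\mathfrak g},J)$ itself. Since $J{\mathfrak a}={\mathfrak a}$, pick a unitary coframe $\varphi_1,\dots,\varphi_n$ of $(1,0)$-forms with $\varphi_1,\dots,\varphi_{n-1}$ spanning the annihilator of a $J$-invariant complement of ${\mathfrak a}$ and $\varphi_n$ annihilating ${\mathfrak a}^{\mathbb C}$. Because ${\mathfrak a}$ is an abelian ideal, $[{\mathfrak g},{\mathfrak g}]\subseteq{\mathfrak a}$ and $[{\mathfrak a},{\mathfrak a}]=0$; hence ${\mathfrak g}$ is $2$-step solvable, $\varphi_n$ kills $[{\mathfrak g},{\mathfrak g}]$, so $d\varphi_n=0$, and using the integrability condition \eqref{integrability} one finds
\[
d\varphi_\alpha=-\sum_\beta D_{\alpha\beta}\,\varphi_n\wedge\varphi_\beta-\sum_\beta G_{\alpha\beta}\,\varphi_n\wedge\bar\varphi_\beta-\sum_\beta H_{\alpha\beta}\,\bar\varphi_n\wedge\varphi_\beta-\xi_\alpha\,\varphi_n\wedge\bar\varphi_n,\qquad 1\le\alpha\le n-1,
\]
where $D,G,H$ are the blocks of $\mathrm{ad}$ of the transverse $(1,0)$-direction $X$ acting on ${\mathfrak a}^{\mathbb C}$ (the $(1,0)$-to-$(0,1)$ block being zero by integrability) and the vector $\xi$ records $[X,\bar X]=\tfrac{\mathrm i}{2}[e_n,Je_n]\in{\mathfrak a}^{\mathbb C}$. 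Unimodularity reads $\tr(\mathrm{ad}_{e_n}|_{\mathfrak a})=\tr(\mathrm{ad}_{Je_n}|_{\mathfrak a})=0$, and the Jacobi identity gives $[\mathrm{ad}_{e_n}|_{\mathfrak a},\mathrm{ad}_{Je_n}|_{\mathfrak a}]=0$ together with quadratic relations such as $G\bar G=0$.

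\textbf{Step 2: what pluriclosedness forces.} Write $\omega_h=\tfrac{\mathrm i}{2}\sum h_{\alpha\bar\beta}\varphi_\alpha\wedge\bar\varphi_\beta$ with $(h_{\alpha\bar\beta})$ positive Hermitian. Substituting the structure equations into $\partial\bar\partial\omega_h=0$ and using the characterization of pluriclosed metrics from \cite{GuoZ}, the coefficients of the independent $(2,2)$-forms $\varphi_n\wedge\varphi_\mu\wedge\bar\varphi_n\wedge\bar\varphi_\nu$ turn out to be positive-semidefinite Hermitian expressions in $D,G,H$ (of the shapes $D^{*}\widetilde hD$, $G^{T}\widetilde h\bar G$, $H^{*}\widetilde hH$ built from the positive matrix $h$), possibly together with cross-terms involving $\xi$; each such expression is therefore forced to vanish. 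After disposing of the cross-terms by a case analysis, this yields $D=G=H=0$, i.e.\ $\mathrm{ad}_{e_n}|_{\mathfrak a}=\mathrm{ad}_{Je_n}|_{\mathfrak a}=0$. The only surviving bracket is then $[e_n,Je_n]=v\in{\mathfrak a}$, which is central, so ${\mathfrak g}$ is either abelian ($v=0$) or isomorphic to ${\mathfrak h}_3\oplus{\mathbb R}^{2n-3}$.

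\textbf{Step 3: balancedness excludes the Heisenberg case, and conclusion.} Suppose ${\mathfrak g}\cong{\mathfrak h}_3\oplus{\mathbb R}^{2n-3}$ with $v\ne0$. Then $J{\mathfrak a}={\mathfrak a}$ forces ${\mathfrak a}$ to be the center and $v^{1,0}\in{\mathfrak a}^{1,0}$, so after renumbering $d\varphi_1=\xi_1\,\varphi_n\wedge\bar\varphi_n$ with $\xi_1\ne0$ and $d\varphi_\alpha=0$ for $\alpha\ge2$. For any compatible metric $\omega_g=\tfrac{\mathrm i}{2}\sum g_{\alpha\bar\beta}\varphi_\alpha\wedge\bar\varphi_\beta$, a direct computation of $d(\omega_g^{\,n-1})$ (equivalently, of the Lee form) produces a nonzero multiple of $\xi_1\sum_\beta g_{1\bar\beta}\bar\varphi_\beta$ plus its conjugate, which cannot vanish since $\xi_1\ne0$ and $g_{1\bar1}>0$; hence ${\mathfrak g}$ admits no balanced metric, contradicting the hypothesis on $g$. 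Therefore $v=0$, ${\mathfrak g}$ is abelian, $({\mathfrak g},J)$ is the flat model of a complex torus, and every compatible left-invariant inner product is K\"ahler; transporting back through the averaging gives the desired K\"ahler metric on $M$.

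\textbf{Main obstacle.} The delicate point is that the balanced $g$ and the pluriclosed $h$ are allowed to differ, so the two conditions cannot simply be added: one must extract from the pluriclosed equation a metric-independent conclusion (``$D=G=H=0$'', obtained through the positive-semidefinite quantities above, using only $h>0$) and then a second metric-independent conclusion (``$v=0$'', from the non-vanishing of the Lee form of $g$, using only $g>0$). Carrying this out requires careful bookkeeping of the four pieces $D,G,H,\xi$ under the integrability condition \eqref{integrability} and the Jacobi relations (in particular $[\mathrm{ad}_{e_n}|_{\mathfrak a},\mathrm{ad}_{Je_n}|_{\mathfrak a}]=0$ and $G\bar G=0$), and especially a case-by-case treatment of the configurations in which the cross-terms in $\partial\bar\partial\omega_h$ could conspire to cancel; this is exactly where the codimension-two case is substantially heavier than the almost abelian ($\mathrm{codim}=1$) situation handled by Fino and Paradiso in \cite{FP1,FP2,FP3}.
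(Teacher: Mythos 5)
Your Step 2 contains a genuine error, and it propagates to a false final conclusion. You claim that the pluriclosed condition on $h$ alone forces $\mathrm{ad}_{e_n}|_{\mathfrak a}=\mathrm{ad}_{Je_n}|_{\mathfrak a}=0$, so that ${\mathfrak g}$ is abelian or Heisenberg-type. This is not what the pluriclosed equation says. In the normalization of \cite{GuoZ} (transverse direction $e_1$, with $X=(C^j_{1i})$, $Y=(D^j_{i1})$, $Z=(D^1_{ij})$, $\lambda=D^1_{11}$, $v=(D^1_{i1})$), pluriclosedness reads $\lambda B+B^{\ast}B+[X^{\ast},B]+{}^t\!Z\overline{Z}-Z\overline{Z}=0$ with $B=Y-X$: it constrains only the \emph{difference} of the two adjoint blocks and the failure of $Z$ to be symmetric, not $X$ and $Y$ separately. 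A concrete counterexample to your claim: take $\lambda=0$, $Z=0$, $v=0$ and $X=Y$ a nonzero skew-Hermitian matrix. The Jacobi identities and unimodularity hold, the metric is K\"ahler (hence pluriclosed), yet the adjoint action on ${\mathfrak a}$ is nonzero and ${\mathfrak g}$ is a non-abelian solvable algebra. For the same reason your final conclusion that ${\mathfrak g}$ must be abelian is false; the theorem only asserts the existence of a compatible K\"ahler metric, and there are genuinely non-abelian examples (higher-dimensional analogues of hyperelliptic surfaces) admitting both balanced and pluriclosed metrics. Your argument, as structured, would ``prove'' the much stronger and false statement that a single pluriclosed metric already forces ${\mathfrak g}$ abelian or Heisenberg.

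The missing idea is precisely the interplay between the two different metrics, which you flag as the ``main obstacle'' but never actually carry out. The paper's route is: the balanced metric $\tilde g$ together with unimodularity forces $\tilde\lambda=0$; a frame-change lemma ($\tilde\lambda=p\lambda$) transfers this to the pluriclosed metric, giving $\lambda=0$; the trace of the Jacobi identity then gives $\mathrm{tr}(Z\overline{Z})=0$; only now does the trace of the pluriclosed equation collapse to $\mathrm{tr}(B^{\ast}B)+\mathrm{tr}({}^t\!Z\overline{Z})=0$, a sum of nonnegative terms, yielding $B=0$ and $Z=0$ (so $X=Y$ is normal, but need not vanish). Finally, the balanced condition $\tilde v=0$ shows that $v$ lies in the range of $Y$, and a last change of frame kills $v$ to produce a K\"ahler metric on the (possibly non-abelian) $({\mathfrak g},J)$. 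Without this two-metric bootstrapping your positive-semidefiniteness argument has nothing to act on, and your Step 3 dichotomy never arises.
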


Note that ${\mathfrak g}$ is unimodular if $\,\mbox{tr}(\mbox{ad}_x)=0$ for all $x\in {\mathfrak g}$. This is a necessary condition for $G$ to have  compact quotients. So in the study of Fino-Vezzoni Conjecture we always restrict ourselves to unimodular Lie algebras, and without this assumption there are plenty of counterexamples. 

It is  quite natural to wonder what happens when $J {\mathfrak a} \neq {\mathfrak a}$. In this case the Hermitian geometry on $({\mathfrak g},J)$ becomes much more complicated algebraically, and so far we are unable to obtain clear-cut characterizations parallel to  \cite{GuoZ}. However, we are able to get basic descriptions on balanced metrics and pluriclosed metrics, which enable us to confirm the Fino-Vezzoni Conjecture for such $({\mathfrak g} ,J)$. The following is the main result of this article:

\begin{theorem} \label{thm2}
Let ${\mathfrak g}$ be a unimodular Lie algebra which contains an abelian ideal ${\mathfrak a}$ of codimension $2$, and $J$ a complex structure on ${\mathfrak g}$ satisfying $J {\mathfrak a} \neq {\mathfrak a}$. Then the Fino-Vezzoni Conjecture holds. 
\end{theorem}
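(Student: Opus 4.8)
The plan is to set up an explicit adapted basis for $({\mathfrak g},J)$ exploiting the abelian ideal ${\mathfrak a}$ of codimension $2$ and the hypothesis $J{\mathfrak a}\neq{\mathfrak a}$. First I would record that ${\mathfrak a}_J={\mathfrak a}\cap J{\mathfrak a}$ is a $J$-invariant abelian ideal of codimension $4$, and that ${\mathfrak b}={\mathfrak a}+J{\mathfrak a}$ is a $J$-invariant subalgebra (in fact an ideal) of codimension $2$, with ${\mathfrak b}/{\mathfrak a}_J$ a $2$-dimensional abelian factor. Choosing $e_1\in{\mathfrak a}$, $e_2=Je_1$ with $e_2\notin{\mathfrak a}$, and completing to a unitary coframe with ${\mathfrak a}_J=\operatorname{span}\{e_3,\dots,e_{2n}\}$ in complex notation $\varphi^1=e_1+\I e_2$, $\varphi^2,\dots,\varphi^n$ spanning $({\mathfrak a}_J)^{1,0}\oplus{\mathbb C}\text{-span}$, the structure equations take a restricted form: because ${\mathfrak a}$ is abelian, all brackets among $e_1,e_3,\dots,e_{2n}$ vanish, so the only nonzero brackets involve $e_2$. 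This forces $d\varphi^j$ for $j\geq2$ to be a combination of $\varphi^1\wedge\bar\varphi^1$, $\varphi^1\wedge\varphi^k$, $\bar\varphi^1\wedge\varphi^k$ type terms, governed by a pair of (commuting, by Jacobi) endomorphisms $A,B$ of ${\mathfrak a}_J$ recording $[e_2,\cdot]$; integrability \eqref{integrability} and unimodularity translate into trace and symmetry conditions on $A,B$.

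Next I would write down what the balanced and pluriclosed conditions say in this coframe. Using $\omega=\frac{\I}{2}\sum h_{i\bar j}\varphi^i\wedge\bar\varphi^j$ for a general compatible metric, the balanced condition $d\omega^{n-1}=0$ and the pluriclosed condition $\partial\bar\partial\omega=0$ each reduce to finitely many linear/quadratic relations among the entries of $h$ and the structure constants. The key structural output I expect is parallel to \cite{GuoZ, LiZ}: the existence of a balanced metric should force the "non-abelian part" to satisfy a strong trace condition (roughly $\operatorname{tr}A=\operatorname{tr}B=0$ after using unimodularity, plus a cancellation making the torsion $1$-form vanish), while the existence of a pluriclosed metric should force $A,B$ to be of a very constrained type (e.g., the associated operators are skew-Hermitian, or have purely imaginary spectrum, on the pluriclosed metric's ${\mathfrak a}_J$). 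Combining the two, one shows $A=B=0$ on the relevant piece, i.e. ${\mathfrak g}$ is forced to be $2$-step with $[{\mathfrak g},{\mathfrak g}]\subseteq{\mathfrak a}_J$ acting trivially, at which point $({\mathfrak g},J)$ is a complex torus bundle and one exhibits an explicit Kähler metric; alternatively one may be able to reduce directly to Theorem \ref{thm1} by producing a $J$-invariant abelian ideal of codimension $2$ once the obstructions vanish.

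I would organize the argument by first treating the generic subcase where ${\mathfrak b}={\mathfrak a}+J{\mathfrak a}$ has codimension $2$ and is nonabelian, then the degenerate possibilities (e.g. ${\mathfrak a}_J$ itself failing to be all of $[{\mathfrak g},{\mathfrak g}]$, or $e_2$ acting nilpotently versus semisimply), since the Hermitian conditions behave differently in each. A useful intermediate lemma, which I would isolate, is that on $({\mathfrak g},J)$ with $J{\mathfrak a}\neq{\mathfrak a}$ the complex $(1,0)$-form $\varphi^1$ dual to ${\mathfrak a}^\perp$ satisfies $d\varphi^1=0$ (or is closed modulo a controlled term), so that $\varphi^1$ behaves like the fiber coordinate of a holomorphic fibration over a $1$-dimensional base; this is what makes the codimension-$2$ case genuinely tractable and mirrors the role of the almost-abelian structure in \cite{FP1, FP2, FP3}.

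The main obstacle I anticipate is the absence of the clean characterizations available in the $J{\mathfrak a}={\mathfrak a}$ case: here $J$ mixes ${\mathfrak a}$ with its complement, so the endomorphisms $A,B$ do not a priori commute with $J$ and the torsion has extra components. Disentangling the balanced and pluriclosed equations into a form where the "force both to hold" argument closes — in particular ruling out exotic solutions where partial cancellations occur between the mixed terms — will require a careful case analysis on the Jordan structure of the action of $e_2$ on ${\mathfrak a}_J$, and this is where the bulk of the computation lies. Once that linear-algebraic core is settled, producing the Kähler metric (or invoking the Alexandrov–Ivanov theorem \cite{AI} to finish) should be routine.
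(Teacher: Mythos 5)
Your frame setup is dimensionally inconsistent, and this propagates into the two substantive errors that follow. Since $\mathfrak{a}$ has codimension $2$ and $J\mathfrak{a}\neq\mathfrak{a}$, the ideal $\mathfrak{a}_J=\mathfrak{a}\cap J\mathfrak{a}$ has real codimension $4$ (as you note), so $\mathfrak{a}+J\mathfrak{a}=\mathfrak{g}$ --- it is not a codimension-$2$ subalgebra --- and a real basis transverse to $\mathfrak{a}_J$ requires four vectors: $x,y\in\mathfrak{a}$ together with $Jx,Jy$. Your frame $\{e_1,e_2=Je_1,e_3,\dots,e_{2n}\}$ with $\mathfrak{a}_J=\operatorname{span}\{e_3,\dots,e_{2n}\}$ allots only two; that is the almost-abelian (codimension-$1$) picture, not the present one. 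Consequently the bracket data is not ``a pair of commuting endomorphisms recording $[e_2,\cdot]$'': it is governed by $\mathrm{ad}_{Jx}$ and $\mathrm{ad}_{Jy}$ acting on $\mathfrak{a}$, and the Jacobi identity gives, for the induced $2\times2$ matrices $A,B$ on $\mathfrak{a}/\mathfrak{a}_J$, the relation $[A,B]=-\sigma A$ where $[Jx,Jy]\equiv\sigma Jx \pmod{\mathfrak{a}}$. These commute only when $\mathfrak{g}/\mathfrak{a}$ is abelian ($\sigma=0$). The non-commuting case $\sigma\neq0$ is the heart of the matter (it forces $A$ to be nilpotent via Lemma \ref{lemma2}), and your setup erases it from the start.

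The second gap is in the endgame. You propose to show that balanced plus pluriclosed forces the transverse action to vanish and then to exhibit a K\"ahler metric. That is the correct template for $J\mathfrak{a}=\mathfrak{a}$ (Theorem \ref{thm1}) and for the $\mathfrak{g}/\mathfrak{a}$-abelian, rank-zero subcase, but it fails in the main case: when $\mathfrak{g}/\mathfrak{a}$ is non-abelian, $(\mathfrak{g},J)$ typically admits no K\"ahler metric at all, and what must be proved is that balanced and pluriclosed metrics cannot coexist. The mechanism is a sign obstruction attached to $J$ alone: writing $[Jx,x]\equiv ax+by$ and $[Jy,y]\equiv c'x+d'y$ modulo $\mathfrak{a}_J$, the existence of a balanced metric forces either the generic alternative $\sigma b>0$ or the degenerate alternative $d'=0$, while a pluriclosed metric forces $\sigma b<0$, respectively $d'\neq0$; since the sign of $\sigma b$ and the vanishing of $d'$ are independent of the choice of compatible metric, the two existence statements are incompatible. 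Nothing in your outline produces such a metric-independent invariant, and the trace and skew-Hermitian conditions you anticipate do not by themselves close the argument. Your proposed intermediate lemma that $\varphi^1$ is closed also fails here, since $C^1_{12}=-i\sigma/(\sqrt2\,\delta')\neq0$ when $\mathfrak{g}/\mathfrak{a}$ is non-abelian.
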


Combining the two theorems above, we get the following:

\begin{corollary} \label{cor3}
The Fino-Vezzoni Conjecture holds for any unimodular Lie algebra which contains an abelian ideal of codimension $2$. 
\end{corollary}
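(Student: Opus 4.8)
The plan is to reduce the theorem to its two constituent cases and then treat the genuinely new case, $J{\mathfrak a}\neq{\mathfrak a}$, by a direct structural analysis. The starting point is that the decomposition into cases is forced: if ${\mathfrak g}$ is a unimodular Lie algebra with an abelian ideal ${\mathfrak a}$ of codimension $2$, and $J$ is any complex structure on ${\mathfrak g}$, then as observed in the introduction the ideal ${\mathfrak a}_J={\mathfrak a}\cap J{\mathfrak a}$ has codimension $2$ or $4$ according to whether $J{\mathfrak a}={\mathfrak a}$ or not. When $J{\mathfrak a}={\mathfrak a}$, Theorem~\ref{thm1} (Li--Z) applies verbatim. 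When $J{\mathfrak a}\neq{\mathfrak a}$, Theorem~\ref{thm2} applies. Since every unimodular ${\mathfrak g}$ with such an ideal, equipped with any complex structure $J$, falls into exactly one of these two buckets, and since verifying Fino--Vezzoni for a Lie-complex manifold with universal cover $(G,J)$ reduces (via the averaging results of Fino--Grantcharov and Ugarte cited above) to the Lie-algebra statement for $({\mathfrak g},J)$, the corollary follows immediately from the conjunction of the two theorems. So the first two paragraphs of the proof will simply spell this out.

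The substance is therefore entirely in Theorem~\ref{thm2}, and that is where I would concentrate. First I would set up coordinates adapted to the flag ${\mathfrak a}_J\subset {\mathfrak a}\subset {\mathfrak g}$: pick a basis so that ${\mathfrak a}_J$ is $J$-invariant of real codimension $4$, choose a complementary $J$-invariant plane, and write the bracket in block form using that ${\mathfrak a}$ is abelian and an ideal. The adjoint action of the $4$-dimensional complement on ${\mathfrak a}$ and on itself, constrained by the Jacobi identity, the integrability condition~\eqref{integrability}, and unimodularity ($\mathrm{tr}(\mathrm{ad}_x)=0$), should cut the possibilities down to a manageable normal form — this is the analogue of what \cite{GuoZ} did in the $J$-invariant case, but messier because ${\mathfrak a}$ and $J{\mathfrak a}$ are transverse. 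Next I would write down, in this normal form, the defining conditions for a compatible metric to be balanced ($d(\omega^{n-1})=0$) and for a (possibly different) compatible metric to be pluriclosed ($\partial\bar\partial\omega=0$), expressing both as algebraic conditions on the structure constants together with the inner product data. The key step is then to show: if both a balanced metric and a pluriclosed metric exist on $({\mathfrak g},J)$, the structure constants are forced to satisfy the closedness condition $d\omega=0$ for some compatible metric, i.e.\ $({\mathfrak g},J)$ is Kähler. I would expect this to come out of a rank/trace argument — the balanced condition typically forces a certain ``co-closed'' vanishing among the torsion components while the pluriclosed condition forces a complementary vanishing, and unimodularity glues them — much in the spirit of the almost abelian analysis of Fino--Paradiso \cite{FP1, FP2, FP3}, which is the natural precedent since almost abelian is exactly the codimension-one case.

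The main obstacle I anticipate is precisely the classification/normal-form step for $J{\mathfrak a}\neq{\mathfrak a}$: because $J$ does not preserve ${\mathfrak a}$, the operator $\mathrm{ad}_x|_{\mathfrak a}$ for $x$ outside ${\mathfrak a}$ need not commute with $J$ in any simple way, so one cannot immediately diagonalize or put it in a clean Jordan form compatible with the Hermitian structure; moreover the two ``extra'' directions (the complement of ${\mathfrak a}$, which is $2$-dimensional and on which $J$ may act in a twisted fashion relative to ${\mathfrak a}$) interact with ${\mathfrak a}_J$ and with each other, producing several subcases. Handling unimodularity correctly across these subcases — ensuring no spurious Kähler conclusion and no missed non-Kähler structure — is the delicate bookkeeping. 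Once the normal form is secured, the balanced-plus-pluriclosed $\Rightarrow$ Kähler implication should reduce, case by case, to the theorem of Alexandrov--Ivanov \cite{AI} in spirit (a balanced pluriclosed metric is Kähler) applied after one shows the two metrics can be ``aligned'', or more directly to a short linear-algebra computation on the structure constants. With Theorem~\ref{thm2} in hand, Corollary~\ref{cor3} is then merely the union of the two disjoint cases, as described above.
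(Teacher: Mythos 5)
Your reduction is exactly the paper's proof of the corollary: every complex structure $J$ on ${\mathfrak g}$ satisfies either $J{\mathfrak a}={\mathfrak a}$ or $J{\mathfrak a}\neq{\mathfrak a}$, these are precisely the hypotheses of Theorem~\ref{thm1} and Theorem~\ref{thm2}, and the corollary is their conjunction. One caveat on your sketch of how Theorem~\ref{thm2} goes: in the $J{\mathfrak a}\neq{\mathfrak a}$, ${\mathfrak g}/{\mathfrak a}$ non-abelian case the paper does \emph{not} upgrade a balanced plus a pluriclosed metric to a K\"ahler one --- such a $J$ typically admits no K\"ahler metric at all --- but instead shows the two existence assumptions impose contradictory conditions on invariants of $J$ (e.g.\ $\sigma b>0$ for balanced versus $\sigma b<0$ for pluriclosed in the generic case, and $d'=0$ versus $d'\neq 0$ in the degenerate case), so the two metric types cannot coexist; the ``align the metrics and conclude K\"ahler'' mechanism you anticipate is what happens only in the $J{\mathfrak a}={\mathfrak a}$ case and in the rank-zero abelian-quotient subcase of Appendix~B.
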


We would like to point out that, despite the parallelness in the appearance of statement for  the above two theorems, the $J{\mathfrak a}= {\mathfrak a}$ case and the $J{\mathfrak a}\neq {\mathfrak a}$ case are actually quite different. In the former case, if a complex structures $J$ on ${\mathfrak g}$ admits both a balanced metric $g$ and a pluriclosed metric $h$, then through natural algebraic modifications one can  construct  another metric $g_0$ which is K\"ahler. In the latter case, however, for the most part $J$ does not admit any K\"ahler metric at all. So one has to show that $J$ admitting balanced metrics and $J$ admitting pluriclosed metrics will result in conflicting features, so these two types of Hermitian metrics cannot coexist on the same $J$.

The topic of this article lies in the area of non-K\"ahler geometry. The main focus has been  on studying the geometry and topology of various special types of Hermitian structures, where balanced and pluriclosed being two of the most extensively studied. We refer the readers to \cite{AOUV, AT, LW, Salamon, STW, Tosatti, TW,  YZ16, YZZ, ZZ-JGA, Zheng} as a sampler of references.

The article is organized as follows. In \S 2 we will set up notations and collect some basic formula for Lie-Hermitian manifolds. In \S 3 we will consider Lie algebras with codimension $2$ abelian ideals, and analyze properties involving the structure constants. In \S 4, we will turn our attention to balanced metrics on such Lie algebras. In \S 5 we study pluriclosed metrics and establish the proof of the aforementioned Theorem \ref{thm2} in the case when ${\mathfrak g}/{\mathfrak a}$ is non-abelian. Finally, we included two appendices for readers' convenience and to make the presentation more complete. In the first one we give a brief outline of the proof of Theorem \ref{thm1}, as the reference \cite{LiZ} is in Chinese. In the second appendix we prove the `$J{\mathfrak a} \neq {\mathfrak a}$ and ${\mathfrak g}/{\mathfrak a}$ abelian' case of Theorem \ref{thm2}, which was omitted before. Note that in this case ${\mathfrak g}$ must be $2$-step solvable. Hermitian metrics on $2$-step solvable Lie algebras were systematically studied by Freibert and Swann in \cite{FSwann22} and \cite{FSwann}. In particular, Fino-Vezzoni Conjecture was established for such Lie algebras of pure types.

\vspace{0.3cm}

\section{Preliminary on Hermitian Lie algebras}

We start by fixing some terminologies and notations, following \cite{VYZ} and \cite{GuoZ}. Let  $(M^n,g)$ be a  compact Hermitian manifold with universal cover $(G,J,g)$,  where $G$ is an even-dimensional Lie group,  $J$ is a left-invariant complex structure on $G$, and $g$ is a left-invariant Riemannian metric on $G$ compatible with $J$. As we mention before, the compactness of $M$ forces $G$ to be unimodular.

Let ${\mathfrak g}$ be the Lie algebra of $G$, and for convenience we will use the same letter $J$ or $g=\langle , \rangle $ to denote respectively the almost complex structure or inner product on ${\mathfrak g}$ corresponding to that of $G$. The  integrability of $J$ is characterized by the property (\ref{integrability}). 

Denote by ${\mathfrak g}^{\mathbb C}$ the complexification of ${\mathfrak g}$, and by ${\mathfrak g}^{1,0}= \{ x-\sqrt{-1}Jx \mid x \in {\mathfrak g}\} \subseteq {\mathfrak g}^{\mathbb C}$. The condition (\ref{integrability}) simply means that ${\mathfrak g}^{1,0}$ is a complex Lie subalgebra of ${\mathfrak g}^{\mathbb C}$. Let us extend $g=\langle , \rangle $ bilinearly over ${\mathbb C}$, and let $e=\{ e_1, \ldots , e_n\}$ be a unitary basis of ${\mathfrak g}^{1,0}$. Again following \cite{VYZ}, we will use
\begin{equation*} 
C^j_{ik} = \langle [e_i,e_k],\overline{e}_j \rangle, \ \ \ \ \ \  D^j_{ik} = \langle [\overline{e}_j, e_k] , e_i \rangle,
\end{equation*}
to denote the structure constants, or equivalently, under the unitary frame $e$ we have
\begin{equation} \label{CD}
[e_i,e_j] = \sum_k C^k_{ij}e_k, \ \ \ \ \ [e_i, \overline{e}_j] = \sum_k \big( \overline{D^i_{kj}} e_k - D^j_{ki} \overline{e}_k \big) .
\end{equation}
Note that ${\mathfrak g}$ is unimodular if and only if $\mbox{tr}(ad_x)=0$ for any $x\in {\mathfrak g}$, which is equivalent to
\begin{equation} \label{unimodular}
{\mathfrak g} \ \, \mbox{is unimodular}  \ \ \Longleftrightarrow  \ \ \sum_r \big( C^r_{ri} + D^r_{ri}\big) =0 , \, \ \forall \ i.
\end{equation}

Denote by $\nabla$ the Chern connection and by $T$ its torsion tensor. We have
\begin{equation*} \label{Gamma}
\nabla e_i = \sum_j \theta_{ij} e_j, \ \ \ \ \theta_{ij} = \sum_k \big( \Gamma^j_{ik} \varphi_k - \overline{\Gamma^i_{jk}}\, \overline{\varphi}_k \big), \ \ \ \ \ \Gamma^j_{ik} = D^j_{ik},
\end{equation*}
where $\{ \varphi_1, \ldots , \varphi_n\}$ is  the coframe dual to $e$ and $\theta$ is the connection matrix of $\nabla$ under $e$. The torsion tensor $T$ of $\nabla$ has components
\begin{equation} \label{torsion}
T( e_i, \overline{e}_j)=0, \ \ \ \ T(e_i,e_j)  = \sum_k T^k_{ij}e_k, \ \ \ \  \ \ T^j_{ik}= - C^j_{ik} -  D^j_{ik} + D^j_{ki}.
\end{equation}
Denote by $\omega = \sqrt{-1}\sum_i \varphi_i \wedge \overline{\varphi}_i$ the K\"ahler form of the metric $g$.   The structure equation takes the form:
\begin{equation} \label{structure}
d\varphi_i = -\frac{1}{2} \sum_{j,k} C^i_{jk} \,\varphi_j\wedge \varphi_k - \sum_{j,k} \overline{D^j_{ik}} \,\varphi_j \wedge \overline{\varphi}_k.
\end{equation}
Differentiate the above, we get the  first Bianchi identity, which is equivalent to the Jacobi identity in this case:
\begin{equation} \label{Jacobi}
\left\{ 
\begin{split} 
  \sum_r \big( C^r_{ij}C^{\ell}_{rk} + C^r_{jk}C^{\ell}_{ri} + C^r_{ki}C^{\ell}_{rj} \big) \ = \ 0,  \hspace{3.1cm}\\
  \sum_r \big( C^r_{ik}D^{\ell}_{jr} + D^r_{ji}D^{\ell}_{rk} - D^r_{jk}D^{\ell}_{ri} \big) \ = \ 0,  \hspace{2.9cm} \\
  \ \ \sum_r \big( C^r_{ik}\overline{D^r_{j \ell}}  - C^j_{rk}\overline{D^i_{r \ell}} + C^j_{ri}\overline{D^k_{r \ell}} -  D^{\ell}_{ri}\overline{D^k_{j r}} +  D^{\ell}_{rk}\overline{D^i_{jr}}  \big) \ = \ 0 . 
\end{split} 
\right. 
\end{equation}
From the structure equation (\ref{structure}), a direct computation  leads us to the following
\begin{eqnarray*}
\partial (\omega^{n-1}) & = & - \eta \wedge \omega^{n-1}, \ \ \ \ \ \ \eta \ =  \ \sum_i \eta_i \varphi_i, \ \ \ \ \eta_i \ = \ \sum_k T^k_{ki} \, = \, \sum_k D^k_{ki}; \\
\sqrt{-1}\partial \overline{\partial} \omega & = & \sum_{i,j,k,\ell } \sum_r \left( -\frac{1}{2} T^r_{ik}  \overline{C^r_{j\ell }} - T^j_{ir} \overline{D^k_{r\ell }} + T^j_{kr} \overline{D^i_{r\ell}} \right) \varphi_i\varphi_k \overline{\varphi}_j \overline{\varphi}_{\ell}.
\end{eqnarray*}
Note that in the last equality for $\eta_i$, we used the assumption that ${\mathfrak g}$ is unimodular (\ref{unimodular}). In particular, one has $\partial \overline{\partial} \omega^{n-1} = 0$,  namely, {\em any unimodular Hermitian Lie algebra $({\mathfrak g}, J, g)$ is always Gauduchon.} By definition and the proof of Lemma 2 in \cite{GuoZ}, we have the following:

\begin{lemma} \label{lemma1}
Let ${\mathfrak g}$ be a unimodular Lie algebra with a Hermitian structure $(J,g)$.  Then the  metric $g$ is balanced if and only if
\begin{equation} \label{balanced}
\sum_{k} D^k_{ik} = 0 , \ \ \ \ \forall \ 1\leq i\leq n.
\end{equation}
The metric $g$ is pluriclosed if and only if
\begin{equation} \label{SKT}
\sum_r \left( - T^r_{ik}  \overline{C^r_{j\ell }} - T^j_{ir} \overline{D^k_{r\ell }} + T^j_{kr} \overline{D^i_{r\ell}}  +  T^{\ell}_{ir} \overline{D^k_{rj }} - T^{\ell}_{kr} \overline{D^i_{rj}}  \right) = 0,
\end{equation}
for any $1\leq i<k\leq n$ and any $1\leq j< \ell \leq n$.  
\end{lemma}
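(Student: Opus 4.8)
The plan is to obtain both characterizations by reading them off the two identities for $\partial(\omega^{n-1})$ and $\sqrt{-1}\,\partial\overline{\partial}\omega$ displayed just above, together with two elementary inputs: the reality of $\omega$, and the fact that wedging with $\omega^{n-1}$ is a Lefschetz isomorphism on the exterior algebra of ${\mathfrak g}^{\ast}$.

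For the balanced part, I would first note that since $\omega$ is a real form, $\overline{\partial(\omega^{n-1})}=\overline{\partial}(\omega^{n-1})$, and these two forms lie in the complementary bidegrees $(n,n-1)$ and $(n-1,n)$; hence $d(\omega^{n-1})=0$ if and only if $\partial(\omega^{n-1})=0$. By the identity $\partial(\omega^{n-1})=-\eta\wedge\omega^{n-1}$ this is equivalent to $\eta\wedge\omega^{n-1}=0$, and since $\alpha\mapsto\alpha\wedge\omega^{n-1}$ is a linear isomorphism from $(1,0)$-forms onto $(n,n-1)$-forms — immediate from a computation in a unitary coframe, or a special case of hard Lefschetz on $\Lambda^{\bullet}{\mathfrak g}^{\ast}$ — this forces $\eta=0$, i.e. $\eta_i=0$ for every $i$. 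Finally I would identify $\eta_i$ from \eqref{torsion}: $T^k_{ki}=-C^k_{ki}-D^k_{ki}+D^k_{ik}$, so summing over $k$ and applying the unimodularity relation \eqref{unimodular} in the form $\sum_k(C^k_{ki}+D^k_{ki})=0$ yields $\eta_i=\sum_k T^k_{ki}=\sum_k D^k_{ik}$. Thus $g$ is balanced precisely when \eqref{balanced} holds.

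For the pluriclosed part, recall that a unimodular Hermitian Lie algebra is automatically Gauduchon, so $\partial\overline{\partial}\omega=0$ if and only if the $(2,2)$-form $\sqrt{-1}\,\partial\overline{\partial}\omega$ vanishes identically. Since $\{\varphi_i\wedge\varphi_k\wedge\overline{\varphi}_j\wedge\overline{\varphi}_\ell : i<k,\ j<\ell\}$ is a basis of $\Lambda^{2,2}{\mathfrak g}^{\ast}$, this vanishing is equivalent to the vanishing, for each such quadruple, of the coefficient obtained by collecting from the displayed sum the four terms with index patterns $(i,k;j,\ell)$, $(k,i;j,\ell)$, $(i,k;\ell,j)$, $(k,i;\ell,j)$ carrying the signs dictated by the wedge product. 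Using the antisymmetries $T^r_{ik}=-T^r_{ki}$ and $C^r_{j\ell}=-C^r_{\ell j}$, the three summands $-\tfrac12 T^r_{ik}\overline{C^r_{j\ell}}$, $-T^j_{ir}\overline{D^k_{r\ell}}$ and $T^j_{kr}\overline{D^i_{r\ell}}$ contribute, after this antisymmetrization, exactly twice the left-hand side of \eqref{SKT}; setting it equal to zero gives \eqref{SKT}.

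I expect the main bookkeeping obstacle to be this last antisymmetrization: keeping track of which of the four terms reproduce expressions already present in the formula for $\sqrt{-1}\,\partial\overline{\partial}\omega$ and which produce the two new terms $T^\ell_{ir}\overline{D^k_{rj}}$ and $-T^\ell_{kr}\overline{D^i_{rj}}$ in \eqref{SKT}, and keeping all the signs consistent. Apart from that, the argument is a direct transcription of the structure equation \eqref{structure} and the two differential identities recorded above, which is why the statement is essentially a corollary of them (as in the proof of Lemma~2 in \cite{GuoZ}).
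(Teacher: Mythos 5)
Your argument is correct and is essentially the paper's own: the paper simply reads both characterizations off the two displayed identities for $\partial(\omega^{n-1})$ and $\sqrt{-1}\,\partial\overline{\partial}\omega$ (citing the proof of Lemma~2 in \cite{GuoZ}), exactly as you do. Your antisymmetrization over the index pairs $(i,k)$ and $(j,\ell)$ does produce twice the left-hand side of \eqref{SKT}, and your use of unimodularity to rewrite $\sum_k T^k_{ki}$ as $\sum_k D^k_{ik}$ matches the paper's remark following the display.
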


Another algebraic fact that will be used later is the following:

\begin{lemma} \label{lemma2}
Suppose $A$ and $B$ are two $n\times n$ matrices satisfying $[A,B]=c A$ for some constant $c\neq 0$. Then $A$ must be nilpotent. 
\end{lemma}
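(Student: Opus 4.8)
The plan is to exploit the commutation relation $[A,B]=cA$ to show that all eigenvalues of $A$ vanish, which (together with $A$ being a matrix over an algebraically closed field, or by passing to the complexification) forces nilpotency. First I would observe that the relation $[A,B]=cA$ can be rewritten as $BA = AB - cA = A(B - c\,\mathrm{1\hspace{-0.6ex}l})$, so that conjugation/multiplication by $A$ intertwines $B$ with $B - c\,\mathrm{1\hspace{-0.6ex}l}$ on the relevant subspaces. The cleanest route is to prove by induction that $[A^m, B] = mc\,A^m$ for all $m \geq 1$: the base case is the hypothesis, and the inductive step follows from $[A^{m+1},B] = A[A^m,B] + [A,B]A^m = A(mcA^m) + (cA)A^m = (m+1)c\,A^{m+1}$.

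Next I would take traces. Since $\mathrm{tr}([A^m,B]) = 0$ for every $m$, the identity $[A^m,B] = mc\,A^m$ gives $mc\,\mathrm{tr}(A^m) = 0$, and as $c \neq 0$ this yields $\mathrm{tr}(A^m) = 0$ for all $m \geq 1$. The power sums of the eigenvalues of $A$ (counted with multiplicity, over $\C$) therefore all vanish; by Newton's identities this forces every elementary symmetric function of the eigenvalues to vanish, so the characteristic polynomial of $A$ is $\lambda^n$, i.e. $A$ is nilpotent. Alternatively, one can argue directly: if $\lambda$ is a nonzero eigenvalue of $A$ with eigenvector $v$, then from $BA v = A(B - c\,\mathrm{1\hspace{-0.6ex}l})v$ one shows that $\lambda, \lambda$ cannot be constrained this way without more care, so the trace-of-powers argument is the more robust one to present.

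The main obstacle, such as it is, is purely bookkeeping: making sure the induction $[A^m,B]=mc\,A^m$ is stated correctly and that the passage from "all power sums $\mathrm{tr}(A^m)$ vanish" to "$A$ is nilpotent" is justified — this is standard over a field of characteristic zero (which is the setting here, since the Lie algebras are real), via Newton's identities relating power sums $p_k = \sum \lambda_i^k$ to elementary symmetric polynomials $e_k$; one concludes $e_1 = \cdots = e_n = 0$, hence $\det(\lambda I - A) = \lambda^n$, hence $A^n = 0$ by Cayley-Hamilton. I would remark that the hypothesis $c \neq 0$ is essential (for $c=0$ one only gets that $A$ and $B$ commute, and $A$ need not be nilpotent), and that no assumption on $B$ beyond the commutation relation is needed.
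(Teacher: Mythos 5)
Your proof is correct, and it is the standard argument: the induction $[A^m,B]=mc\,A^m$ is verified correctly via $[A^{m+1},B]=A[A^m,B]+[A,B]A^m$, taking traces gives $\mathrm{tr}(A^m)=0$ for all $m\ge 1$ since $c\neq 0$, and Newton's identities in characteristic zero then force the characteristic polynomial to be $\lambda^n$. The paper itself omits the proof, calling the lemma a well-known fact of linear algebra, so there is no alternative argument to compare against; what you have written is precisely the canonical proof being alluded to. One small remark: in the paper's applications the matrices involved (e.g.\ $E_1,E_2$ and $Y_1,Y_2$) are complex rather than real, but your argument applies verbatim over $\C$, and the aside in your second paragraph about a direct eigenvector argument can simply be dropped since you correctly identify the trace-of-powers route as the one to present.
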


Since this is a standard fact in linear algebra and is well-known, we will omit its proof here. 

\vspace{0.3cm}

\section{Structure constants}

Throughout this section, we will assume that {\em  ${\mathfrak g}$ is a unimodular Lie algebra of real dimension $2n$ containing an abelian ideal ${\mathfrak a}$ of codimension $2$,  $J$  a complex structure on ${\mathfrak g}$, and $g=\langle , \rangle $ an inner product on ${\mathfrak g}$ compatible with $J$. }

We will also make a couple of assumptions that will be used throughout the rest of the article:
\begin{equation} \label{assumption}
  J{\mathfrak a} \neq {\mathfrak a}, \ \  \mbox{and} \ \ {\mathfrak g}/{\mathfrak a} \ \mbox{is non-abelian}.
  \end{equation}

\begin{remark}
First of all, note that the $J{\mathfrak a} = {\mathfrak a}$ case has been treated in \cite{GuoZ} and \cite{LiZ}. Since \cite{LiZ} is in Chinese, we will  outline the proof of Fino-Vezzoni Conjecture for this case in Appendix A. Secondly, for the ``$J{\mathfrak a} \neq {\mathfrak a}$ but ${\mathfrak g}/{\mathfrak a}$ abelian" case, the Lie algebra ${\mathfrak g}$ must be $2$-step solvable. The Hermitian structures on $2$-step solvable Lie algebras were treated systematically by Freibert and Swann in \cite{FSwann22} and \cite{FSwann} using powerful algebraic methods. In particular they established Fino-Vezzoni Conjecture for all $2$-step solvable Lie algebras of pure types. Since our ${\mathfrak g}$ here might not be of pure types, we included the proof of Theorem \ref{thm2} for this case in Appendix B, which is somewhat lengthy but methodwise analogous to the ${\mathfrak g}/{\mathfrak a}$ non-abelian case.  
\end{remark}
Write
\begin{equation} \label{a'andb}
  {\mathfrak a}_J:= {\mathfrak a} \cap J {\mathfrak a}, \ \ \ \ \ {\mathfrak a}':=  {\mathfrak a} + [{\mathfrak g}, {\mathfrak g}], \ \ \ \ \ {\mathfrak b} = {\mathfrak a} \cap J {\mathfrak a}'.
\end{equation}
Under our assumption (\ref{assumption}),  ${\mathfrak a}_J$  and ${\mathfrak a}'$ are  of codimension $4$ and $1$ in ${\mathfrak g}$, respectively. Using the integrality condition (\ref{integrability}), it is not hard to show that

\begin{lemma} \label{lemma3}
Let $({\mathfrak g},J,g)$ be a unimodular Hermitian Lie algebra which contains an abelian ideal of codimension $2$ satisfying (\ref{assumption}). Then ${\mathfrak a}_J$ is an ideal of ${\mathfrak g}$ and 
$${\mathfrak a}_J \subsetneq {\mathfrak b}  \subsetneq {\mathfrak a} \subsetneq {\mathfrak a}' \subsetneq {\mathfrak g} .$$
Furthermore, if $x\in {\mathfrak b}\setminus {\mathfrak a}_J$, then $Jx \in {\mathfrak a}' \setminus {\mathfrak a}$; while if $y\in {\mathfrak a}\setminus {\mathfrak b}$, then $Jy \notin {\mathfrak a}' $.
\end{lemma}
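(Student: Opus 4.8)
The plan is to pin down all the relevant dimensions by linear algebra, then read off the chain of strict inclusions, then prove that $\mathfrak{a}_J$ is an ideal using the integrability condition, and finally dispatch the two ``furthermore'' assertions by a short remark on how $J$ interacts with subspaces.

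First I would record the dimensions. We have $\dim \mathfrak{a} = \dim J\mathfrak{a} = 2n-2$, and $\mathfrak{a}_J = \mathfrak{a}\cap J\mathfrak{a}$ is automatically $J$-invariant (indeed $J\mathfrak{a}_J = J\mathfrak{a}\cap J^2\mathfrak{a} = J\mathfrak{a}\cap\mathfrak{a} = \mathfrak{a}_J$), hence of even dimension. Since $J\mathfrak{a}\neq\mathfrak{a}$ forces $\dim(\mathfrak{a}+J\mathfrak{a})\geq 2n-1$, the identity $\dim\mathfrak{a}_J = 2(2n-2) - \dim(\mathfrak{a}+J\mathfrak{a})$ together with the parity constraint yields $\mathfrak{a}+J\mathfrak{a} = \mathfrak{g}$ and $\dim\mathfrak{a}_J = 2n-4$. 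Because $\mathfrak{g}/\mathfrak{a}$ is two-dimensional and non-abelian, its derived subalgebra is one-dimensional, so $\mathfrak{a}'/\mathfrak{a}$ is one-dimensional and $\dim\mathfrak{a}' = 2n-1$; and by the dimension formula for intersections, $\dim\mathfrak{b} = \dim(\mathfrak{a}\cap J\mathfrak{a}') \geq \dim\mathfrak{a} + \dim J\mathfrak{a}' - 2n = 2n-3$. The inclusions $\mathfrak{a}_J\subseteq\mathfrak{b}\subseteq\mathfrak{a}\subseteq\mathfrak{a}'\subseteq\mathfrak{g}$ are immediate from the definitions (for the first, $\mathfrak{a}\subseteq\mathfrak{a}'$ gives $J\mathfrak{a}\subseteq J\mathfrak{a}'$, hence $\mathfrak{a}_J = \mathfrak{a}\cap J\mathfrak{a}\subseteq\mathfrak{a}\cap J\mathfrak{a}' = \mathfrak{b}$). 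For strictness: $\mathfrak{a}'\subsetneq\mathfrak{g}$ and $\mathfrak{a}\subsetneq\mathfrak{a}'$ by the codimension count; $\mathfrak{a}_J\subsetneq\mathfrak{b}$ because $2n-4 < 2n-3 \leq \dim\mathfrak{b}$; and $\mathfrak{b}\subsetneq\mathfrak{a}$ because $\mathfrak{b} = \mathfrak{a}$ would mean $\mathfrak{a}\subseteq J\mathfrak{a}'$, i.e.\ $J\mathfrak{a}\subseteq\mathfrak{a}'$, whence $\mathfrak{g} = \mathfrak{a}+J\mathfrak{a}\subseteq\mathfrak{a}'$, contradicting $\mathfrak{a}'\subsetneq\mathfrak{g}$. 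In particular $\dim\mathfrak{b} = 2n-3$, so each step of the chain has codimension exactly one.

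The real content is showing that $\mathfrak{a}_J$ is an ideal. Since $\mathfrak{a}_J\subseteq\mathfrak{a}$ and $\mathfrak{a}$ is abelian, $[\mathfrak{a},\mathfrak{a}_J] = 0$, so it suffices to treat brackets $[x,z]$ with $x\in\mathfrak{g}$ and $z\in\mathfrak{a}_J$. Using $\mathfrak{g} = \mathfrak{a}+J\mathfrak{a}$, write $x = a + Jc$ with $a,c\in\mathfrak{a}$; then $[x,z] = [Jc,z]$. Applying the integrability condition (\ref{integrability}) to the pair $(Jc,z)$, and observing that $J(Jc) = -c\in\mathfrak{a}$ and $Jz\in\mathfrak{a}_J\subseteq\mathfrak{a}$ so that the terms $[c,z]$ and $[c,Jz]$ vanish by abelianness, the identity collapses to $[Jc,z] = -J[Jc,Jz]$. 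Since $Jz\in\mathfrak{a}$ and $\mathfrak{a}$ is an ideal, $[Jc,Jz]\in\mathfrak{a}$, so $[x,z]\in J\mathfrak{a}$; as also $[x,z]\in[\mathfrak{g},\mathfrak{a}]\subseteq\mathfrak{a}$, we conclude $[x,z]\in\mathfrak{a}\cap J\mathfrak{a} = \mathfrak{a}_J$. This is the single step where the integrability of $J$ is genuinely used, and I expect it to be the main obstacle: the trick is that conjugating an arbitrary $x$ into $J\mathfrak{a}$ makes three of the four Nijenhuis terms disappear precisely because everything else involved lies in the abelian ideal.

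Finally, for the ``furthermore'' part I would use only the elementary fact that for a linear subspace $W$ one has $Jv\in W$ if and only if $v\in JW$ (since $J^{-1} = -J$). If $x\in\mathfrak{b}\setminus\mathfrak{a}_J$, then $x\in J\mathfrak{a}'$ gives $Jx\in\mathfrak{a}'$, while $Jx\in\mathfrak{a}$ would force $x\in\mathfrak{a}\cap J\mathfrak{a} = \mathfrak{a}_J$, a contradiction; hence $Jx\in\mathfrak{a}'\setminus\mathfrak{a}$. If $y\in\mathfrak{a}\setminus\mathfrak{b}$ and $Jy\in\mathfrak{a}'$, then $y\in J\mathfrak{a}'$, so $y\in\mathfrak{a}\cap J\mathfrak{a}' = \mathfrak{b}$, a contradiction; hence $Jy\notin\mathfrak{a}'$. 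This completes the plan.
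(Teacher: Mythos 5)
Your proof is correct and complete: the dimension counts, the strictness of each inclusion, the use of the integrability condition with one argument pushed into $J\mathfrak{a}$ so that the abelianness of $\mathfrak{a}$ kills three of the four terms, and the elementary $J$-translation argument for the final assertions all check out. The paper omits this proof entirely (stating only that it follows from the integrability condition), and your argument is exactly the one its hint points to.
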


\begin{definition} [{\bf admissible frame}]
Let $({\mathfrak g},J,g)$ be a unimodular Hermitian Lie algebra which contains an abelian ideal ${\mathfrak a}$ of codimension $2$ satisfying (\ref{assumption}). Suppose $\dim_{ \mathbb R}{\mathfrak g}=2n$. A unitary basis $\{ e_1, \ldots , e_n\}$ of ${\mathfrak g}^{1,0}$ is called an {\bf admissible frame} of ${\mathfrak g}$, if 
\begin{equation*} \label{def}
{\mathfrak a}_J = \mbox{span}\{ e_j+\overline{e}_j, i(e_j-\overline{e}_j); \ 3\leq j\leq n\}, \ \ \ \ {\mathfrak a} ={\mathfrak a}_J + \mbox{span}\{x,y\}, 
\end{equation*} 
where $\{ x,y\}$ is perpendicular to ${\mathfrak a}_J$ and orthonormal, with $x\in {\mathfrak b}$ and 
\begin{equation*} \label{def2}
e_1=\frac{1}{\sqrt{2}}(x-iJx), \ \ \ Y= \frac{1}{\sqrt{2}}(y-iJy) = i\delta e_1 + \delta'e_2.
\end{equation*} 
Here $\delta = \langle Jx, y\rangle \in (-1,1)$ and $\delta '=\sqrt{1-\delta^2} \in (0,1]$. 
\end{definition}

Note that once the metric $g$ is given on $({\mathfrak g}, J)$, both $x$ and $y$ are uniquely determined up to $\pm $ sign, as $x\in {\mathfrak b}\cap  {\mathfrak a}_J^{\perp}$ and $y\in {\mathfrak a}\cap  {\mathfrak b}^{\perp}$.

\begin{lemma} \label{lemma4}
Let $({\mathfrak g},J,g)$ be a unimodular Hermitian Lie algebra which contains an abelian ideal ${\mathfrak a}$ of codimension $2$ satisfying (\ref{assumption}), and $e$ an admissible frame. Then the structure constants $C$ and $D$ satisfy 
\begin{eqnarray*}
&& C^{\ast}_{ij} = D^{\ast}_{\ast i} = C^{\alpha }_{\beta i} = D^i_{\alpha\beta} =0,   \label{eqCD1}\\
&& C^{\ast}_{1i}= \overline{D^i_{\ast 1}} , \ \ \ C^{\ast}_{2i}= \overline{D^i_{\ast 2}} -2t \overline{D^i_{\ast 1}}, \ \ \ C^{\ast}_{12}= \overline{D^2_{\ast 1}} -  \overline{D^1_{\ast 2}} + 2t \overline{D^1_{\ast 1}},  \label{eqCD2}
\end{eqnarray*}
for any $1\leq \alpha , \beta \leq 2$ and any $3\leq i,j\leq n$, any $1\leq \ast \leq n$. Here $t=\frac{i\delta}{\delta'}$.
\end{lemma}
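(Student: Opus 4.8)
The plan is to work entirely inside the complexification ${\mathfrak g}^{\mathbb C}$, using only two structural inputs: that ${\mathfrak a}$ is abelian (hence ${\mathfrak a}^{\mathbb C}$, and the $J$-invariant subspace ${\mathfrak a}_J^{\mathbb C}={\mathfrak a}_J\otimes{\mathbb C}$, are abelian), and that ${\mathfrak a}_J$ is an ideal of ${\mathfrak g}$ (Lemma \ref{lemma3}), so that ${\mathfrak a}_J^{\mathbb C}=\mbox{span}_{\mathbb C}\{e_3,\dots,e_n,\overline{e}_3,\dots,\overline{e}_n\}$ is an ideal of ${\mathfrak g}^{\mathbb C}$ with ${\mathfrak a}_J^{1,0}=\mbox{span}_{\mathbb C}\{e_3,\dots,e_n\}$. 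In the admissible frame $x=\frac{1}{\sqrt2}(e_1+\overline{e}_1)$ and, since $Y=\frac{1}{\sqrt2}(y-iJy)=i\delta e_1+\delta'e_2$, also $y=\frac{1}{\sqrt2}\big(i\delta(e_1-\overline{e}_1)+\delta'(e_2+\overline{e}_2)\big)$; both lie in ${\mathfrak a}$, so $[x,e_i]=[y,e_i]=[x,y]=0$ and $[e_i,e_j]=0$ for $i,j\ge 3$. Throughout I read off each structure constant as a single coefficient in the unitary (co)frame: by (\ref{CD}), $C^k_{ij}$ is the $e_k$-coefficient of $[e_i,e_j]$, and $[\overline{e}_j,e_i]=\sum_k\big(D^j_{ki}\overline{e}_k-\overline{D^i_{kj}}\,e_k\big)$, so $D^j_{ki}$ is the $\overline{e}_k$-coefficient and $-\overline{D^i_{kj}}$ the $e_k$-coefficient of $[\overline{e}_j,e_i]$.

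\emph{The vanishing identities.} For $i,j\ge 3$ the brackets $[e_i,e_j]$ and $[\overline{e}_j,e_i]$ vanish, being brackets inside the abelian algebra ${\mathfrak a}_J^{\mathbb C}$; hence $C^{\ast}_{ij}=0$, and $D^{\ast}_{\ast i}=0$ whenever the upper index of $D$ is at least $3$. For $\alpha,\beta\le 2$ and $i\ge 3$, since ${\mathfrak a}_J$ is an ideal and $e_i\in{\mathfrak a}_J^{\mathbb C}$, both $[e_\beta,e_i]$ and $[\overline{e}_i,e_\beta]$ lie in ${\mathfrak a}_J^{\mathbb C}$, which involves no $e_\alpha$ or $\overline{e}_\alpha$ with $\alpha\le 2$; reading off coefficients gives $C^{\alpha}_{\beta i}=0$ and $D^i_{\alpha\beta}=0$. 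It remains to kill $D^1_{ki}$ and $D^2_{ki}$ for $i\ge 3$. From $[x,e_i]=0$ we get $[\overline{e}_1,e_i]=-[e_1,e_i]\in{\mathfrak g}^{1,0}$, so $[\overline{e}_1,e_i]\in{\mathfrak g}^{1,0}\cap{\mathfrak a}_J^{\mathbb C}={\mathfrak a}_J^{1,0}$ and all its $\overline{e}_k$-coefficients vanish, i.e.\ $D^1_{ki}=0$ for every $k$. Feeding this into $[y,e_i]=0$, rewritten as $\delta'[\overline{e}_2,e_i]=-i\delta[e_1,e_i]+i\delta[\overline{e}_1,e_i]-\delta'[e_2,e_i]$, we see $\delta'[\overline{e}_2,e_i]\in{\mathfrak g}^{1,0}$; since $\delta'=\sqrt{1-\delta^2}>0$ this gives $[\overline{e}_2,e_i]\in{\mathfrak a}_J^{1,0}$ and $D^2_{ki}=0$ for every $k$. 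This is the first line of the Lemma.

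\emph{The relations between $C$ and $D$.} These come from the same three vanishing brackets, now by extracting the full $e_k$-coefficient. From $[x,e_i]=\frac{1}{\sqrt2}([e_1,e_i]+[\overline{e}_1,e_i])=0$ the $e_k$-coefficient is $C^k_{1i}-\overline{D^i_{k1}}=0$, i.e.\ $C^{\ast}_{1i}=\overline{D^i_{\ast 1}}$. The $e_k$-coefficient of $[y,e_i]=0$ reads $i\delta\big(C^k_{1i}+\overline{D^i_{k1}}\big)+\delta'\big(C^k_{2i}-\overline{D^i_{k2}}\big)=0$; substituting the previous identity and dividing by $\delta'$ gives $C^{\ast}_{2i}=\overline{D^i_{\ast 2}}-2t\,\overline{D^i_{\ast 1}}$ with $t=i\delta/\delta'$. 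Finally $[x,y]=0$ expands to $-2i\delta[e_1,\overline{e}_1]+\delta'\big([e_1,e_2]+[e_1,\overline{e}_2]+[\overline{e}_1,e_2]+[\overline{e}_1,\overline{e}_2]\big)=0$; since $[\overline{e}_1,\overline{e}_2]\in{\mathfrak g}^{0,1}$ contributes no $e_k$-term, the $e_k$-coefficient reads $-2i\delta\,\overline{D^1_{k1}}+\delta'\big(C^k_{12}+\overline{D^1_{k2}}-\overline{D^2_{k1}}\big)=0$, i.e.\ $C^{\ast}_{12}=\overline{D^2_{\ast 1}}-\overline{D^1_{\ast 2}}+2t\,\overline{D^1_{\ast 1}}$. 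This is the second line.

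\emph{Main difficulty.} Conceptually the argument is light: apart from Lemma \ref{lemma3} it uses only that ${\mathfrak a}$ is abelian together with linear algebra over ${\mathbb C}$. The real work is bookkeeping --- keeping straight which of $C,D,\overline{C},\overline{D}$ appears as the $e_k$- or $\overline{e}_k$-coefficient of $[e_i,e_j]$, $[e_i,\overline{e}_j]$ or $[\overline{e}_i,e_j]$ via (\ref{CD}), and splitting the relevant brackets into their $(1,0)$ and $(0,1)$ parts. The one substantive point is the normalization built into the admissible frame, namely that $Y$ lies in the $e_1,e_2$-plane with $\delta'>0$: this is exactly what legitimizes the division producing $t=i\delta/\delta'$ and makes $[y,e_i]=0$ and $[x,y]=0$ collapse to the stated clean identities; without it the analysis of the non-$J$-invariant directions of ${\mathfrak a}$ would be considerably messier.
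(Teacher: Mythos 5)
Your proof is correct, and it follows the same route the paper indicates: everything is read off from the brackets of elements of ${\mathfrak a}^{\mathbb C}$ (which vanish since ${\mathfrak a}$ is abelian and ${\mathfrak a}_J$ is an ideal) by comparing $e_k$- and $\overline{e}_k$-coefficients via (\ref{CD}), using $x=\frac{1}{\sqrt2}(e_1+\overline{e}_1)$ and $Y=i\delta e_1+\delta'e_2$. The paper only sketches this ("direct consequences... we omit the straightforward calculations"), so your write-up, including the slightly less obvious steps $D^1_{\ast i}=D^2_{\ast i}=0$ obtained from $[x,e_i]=[y,e_i]=0$ and the $(1,0)$-subalgebra property, is a faithful completion of the intended argument.
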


\begin{proof}
These properties are direct consequences of the fact that ${\mathfrak a}$ is abelian and is an ideal. For instance, from the fact that $[{\mathfrak a}_J, {\mathfrak a}_J]=0$ we get $[e_i,e_j]=[e_i, \overline{e}_j]=0$ for any $3\leq i,j\leq n$, hence by (\ref{CD}) we have $C^{\ast }_{ij}=D^i_{\ast j}=0$. The other properties follows similarly, and  we  omit the straight forward calculations here.
\end{proof}

Based on the above lemma, under an admissible frame $e$ all the  $C$ components can be expressed in terms of $D$ components, and the only possibly non-zero components of $D$ are the following:
\begin{equation} \label{11}
D_{\alpha} = (D^{\,\ast}_{\ast \alpha}) = \left[ \begin{array}{ll} E_{\alpha} & 0 \\ V_{\alpha} & Y_{\alpha}  \end{array} \right] , \ \ \ \ \ E_{\alpha} = \left[ \begin{array}{ll} D^{\,1}_{1\alpha} &  D^{\,2}_{1\alpha} \\  D^{\,1}_{2\alpha} &  D^{\,2}_{2\alpha}  \end{array} \right] , \ \ \ \ \ V_{\alpha} = [v^1_{\alpha} , v^2_{\alpha}] .\ \ \ \ 
\end{equation}
for $\alpha =1$ and $2$, where each $v^{\beta}_{\alpha}$ (for $\beta =1,2$) is a column vector in ${\mathbb C}^{n-2}$, while each $Y_{\alpha} =(D^{j}_{i\alpha})$ is an $(n-2)\times (n-2)$ matrix. In order to prove the Fino-Vezzoni Conjecture, we need to  find the exact value of the entries of $E_1$ and $E_2$. Towards this end, let us write

\begin{equation} \label{abcd}
\left\{ \begin{split} 
[Jx, x] =  ax + by + {\mathfrak a}_J ;\ \,\\ 
[Jx, y] =  cx + dy + {\mathfrak a}_J ;\ \,\\ 
\ [Jy, x] =  a'x + b'y + {\mathfrak a}_J ;\\
\ [Jy, y] =  c'x + d'y + {\mathfrak a}_J.
\end{split}
\right.
\end{equation}
where $a, \ldots , d'$ are real constants. By the choice of admissible frames, $x\in {\mathfrak b}={\mathfrak a} \cap J {\mathfrak a}' $,  so $Jx \in J {\mathfrak b} \subseteq {\mathfrak a}'$. Thus  $[Jx,Jy]=\sigma Jx + {\mathfrak a}$ for some constant $\sigma$, and this constant cannot be zero as otherwise ${\mathfrak g}/{\mathfrak a}$ would be abelian, contradicting with  (\ref{assumption}). By the integrability condition (\ref{integrability}) and the fact that $[x,y]=0$, we get
\begin{equation*} \label{JxJy}
[Jx, Jy] =  J \{ [Jx,y] - [Jy, x] \} = (c-a')Jx + (d-b')Jy + {\mathfrak a}_J = \sigma Jx + {\mathfrak a}.
\end{equation*}
Therefore we must have
\begin{equation} \label{JxJy}
[Jx, Jy] =   \sigma Jx + {\mathfrak a}_J \ \ \ \ \mbox{and} \ \ \ \ a'=c-\sigma , \ \ b'=d.
\end{equation}
On the other hand, by Jacobi identity, the equation
$$ [[Jx, Jy], z] = [ Jx, [Jy, z]] - [Jy , [Jx, z]] $$
holds for any $z \in {\mathfrak g}$. Applying this to $z=x$ and $z=y$, we get
$$ [ A, B] = - \sigma A, \ \ \ \mbox{where} \ \ A= \left[ \begin{array}{ll} a & b \\ c & d \end{array} \right] , \ \ B= \left[ \begin{array}{ll} a' & b' \\ c' & d' \end{array} \right]  .$$

By Lemma \ref{lemma2}, we know that the  $2\times 2$ matrix $A$ must be nilpotent, so it has vanishing trace and determinant. Therefore we have
\begin{equation} \label{abcd1}
d=-a, \ \ \ a^2+bc = 0, \ \ \ a'=c-\sigma , \ \ \ b'= -a, 
\end{equation}
while the equation $[A,B]=-\sigma A$ becomes 
\begin{equation} \label{abcd2}
\left\{ \begin{split} \,bc' = -a (c+\sigma ) ; \\
\ bd' = a^2 - 2b \sigma ;\ \,\\
2ac' + c d' = c^2 .\ 
\end{split} \right.
\end{equation}
For convenience in future discussions, let us introduce the following terminology:

\begin{definition} [{\bf types of $J$}]
Let ${\mathfrak g}$ be a unimodular Lie algebra containing a codimension $2$ abelian ideal ${\mathfrak a}$ such that ${\mathfrak g}/{\mathfrak a}$ is not abelian, and $J$ a complex structure on ${\mathfrak g}$ such that $J{\mathfrak a} \neq {\mathfrak a}$. We will call $({\mathfrak g},J)$ to be  {\bf generic} if $[J{\mathfrak b}, {\mathfrak b}] \not\subseteq {\mathfrak b}$;  call it {\bf half-generic} if  $[J{\mathfrak b}, {\mathfrak b}] \subseteq {\mathfrak b}$ and  $[J{\mathfrak b}, {\mathfrak a}] \not\subseteq {\mathfrak a}_J$; and call it {\bf degenerate} if    $[J{\mathfrak b}, {\mathfrak a}] \subseteq {\mathfrak a}_J$.
\end{definition}

Clearly, these three cases are mutually disjoint, and any $J$ with $J{\mathfrak a} \neq {\mathfrak a}$ has to be one of them. Also,  for any given metric $g$ compatible with $J$, then under any admissible frame we have
$$ \left\{ \begin{split} J \ \mbox{is generic} \, \Longleftrightarrow \, b\neq 0; \hspace{2.2cm} \\
J \ \mbox{is half-generic} \, \Longleftrightarrow \, b=0 \ \mbox{and} \ c\neq 0; \\
J \ \mbox{is degenerate} \, \Longleftrightarrow \, b=c= 0. \hspace{1.1cm}
\end{split} \right.  $$

\begin{lemma} \label{lemma5}
Let $({\mathfrak g},J,g)$ be a unimodular Hermitian Lie algebra which contains an abelian ideal ${\mathfrak a}$ of codimension $2$ satisfying (\ref{assumption}), and $e$ an admissible frame. Then we have 
\begin{equation*} \label{abcd3}
\left\{ \begin{split} \mbox{When} \ J \ \mbox{is generic}: \  \  \ b\neq 0, \ c=-\frac{a^2}{b}, \ c'=-\frac{a}{b}(c+\sigma ), \ d'= -c -2\sigma ;  \ \\
\mbox{When} \ J \ \mbox{is half-generic}: \ \   c\neq 0, \ a=b=0, \ d'=c, \ \mbox{and} \ c' \ \mbox{is artbitrary}; \\
\mbox{When} \ J \ \mbox{is degenerate}:  \ \ \  a=b=c= 0, \ c' \ \mbox{and} \ \, d' \ \mbox{are artbitrary.} \hspace{1.2cm}
\end{split} \right.  
\end{equation*}
\end{lemma}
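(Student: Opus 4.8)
The plan is to derive Lemma \ref{lemma5} purely by bookkeeping from the constraints already assembled, specializing the general relations (\ref{abcd1}) and (\ref{abcd2}) according to which of the three cases for $J$ we are in, as dictated by the trichotomy ($b\neq 0$; $b=0,\,c\neq 0$; $b=c=0$) recorded just before the lemma. Nothing new needs to be introduced: equations (\ref{abcd1}) give $d=-a$, $a^2+bc=0$, $a'=c-\sigma$, $b'=-a$ unconditionally, and (\ref{abcd2}) gives the three scalar identities $bc'=-a(c+\sigma)$, $bd'=a^2-2b\sigma$, $2ac'+cd'=c^2$. So the entire proof is: plug in, divide by $b$ or by $c$ when they are known to be nonzero, and observe which constants become free.

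First I would handle the generic case $b\neq 0$. From $a^2+bc=0$ we immediately get $c=-a^2/b$. Dividing the first identity of (\ref{abcd2}) by $b$ gives $c'=-\tfrac{a}{b}(c+\sigma)$. Dividing the second by $b$ gives $d'=\tfrac{a^2}{b}-2\sigma=-c-2\sigma$, using $a^2/b=-c$. It remains to check that the third identity $2ac'+cd'=c^2$ is then automatically satisfied and imposes no further restriction — this is the one genuine computation, substituting the expressions for $c'$ and $d'$ and using $a^2=-bc$; I expect it to collapse to an identity, which is the expected consistency and the reason the three displayed equations in the lemma are exactly the content of (\ref{abcd1})--(\ref{abcd2}) in this case. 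I would record that verification explicitly but briefly.

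Next, the half-generic case $b=0$, $c\neq 0$. From $a^2+bc=0$ with $b=0$ we get $a=0$, hence also $d=-a=0$ and $b'=-a=0$. The first two identities of (\ref{abcd2}) become $0=0$ (since $b=0$ and $a=0$), so they are vacuous. The third, $2ac'+cd'=c^2$, reduces with $a=0$ to $cd'=c^2$, and since $c\neq 0$ this forces $d'=c$, while $c'$ is unconstrained. In the degenerate case $b=c=0$: again $a^2+bc=0$ forces $a=0$, so $d=b'=0$; all three identities of (\ref{abcd2}) become $0=0$, leaving both $c'$ and $d'$ free. I would also note, for completeness, that the free parameters in the half-generic and degenerate cases are genuinely realizable (the Jacobi identity imposing nothing more on them at this stage), though the lemma only asserts they are "arbitrary" in the sense of being unconstrained by (\ref{abcd1})--(\ref{abcd2}).

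The main obstacle is the routine-but-necessary verification that the third equation of (\ref{abcd2}) is redundant in the generic case once $c,c',d'$ are expressed through $a,b,\sigma$; everything else is immediate substitution. There is no conceptual difficulty, so the proof will be short, essentially: "Substitute the case hypotheses into (\ref{abcd1}) and (\ref{abcd2}); in the generic case solve the first two equations of (\ref{abcd2}) for $c'$ and $d'$ and check the third holds identically; in the half-generic and degenerate cases the vanishing of $a$ (forced by $a^2+bc=0$) makes the first two equations trivial and the third determines $d'$ or nothing at all."
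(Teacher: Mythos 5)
Your proposal is correct and is exactly the argument the paper intends (the lemma is stated without proof precisely because it is an immediate case-by-case specialization of (\ref{abcd1}) and (\ref{abcd2})). Your verification that the third equation $2ac'+cd'=c^2$ collapses to an identity in the generic case, via $a^2/b=-c$, is the only nontrivial check and it goes through as you describe.
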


Note that we always have $\sigma \neq 0$ and $a'=c-\sigma$, $d=b'=-a$ by (\ref{JxJy}). In each of the above three cases, the data is determined by three independent real constants: by $b\sigma \neq 0$ and $a$ in the generic case; by $c\sigma \neq 0$ and $c'$ in the half-generic case; by  $\sigma \neq 0$ and $c'$, $d'$ in the degenerate case. We have the following:

\begin{lemma} \label{lemma6}
Let $({\mathfrak g},J,g)$ be a unimodular Hermitian Lie algebra which contains an abelian ideal ${\mathfrak a}$ of codimension $2$ satisfying (\ref{assumption}), and $e$ an admissible frame. Then we have 
\begin{equation} \label{C12}
C^1_{12} = -\frac{\ i\sigma }{\sqrt{2}\delta'}, \ \ \ C^2_{12}=0, 
\end{equation}
where $\sigma \neq 0$ is the constant given by $[Jx,Jy]=\sigma Jx + {\mathfrak a}_J$ in (\ref{JxJy}). 
\end{lemma}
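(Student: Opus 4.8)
The plan is to extract $C^1_{12}$ and $C^2_{12}$ directly from the relation between the structure constants of $\mathfrak g$ written in the real basis $\{x, y, Jx, Jy, \dots\}$ and those written in the unitary admissible frame $e$. Recall from the definition of admissible frame that $e_1 = \tfrac{1}{\sqrt 2}(x - iJx)$ and $Y := \tfrac{1}{\sqrt 2}(y - iJy) = i\delta e_1 + \delta' e_2$, so that
\[
e_2 = \frac{1}{\delta'}\Big( Y - i\delta e_1\Big) = \frac{1}{\delta'}\Big( \tfrac{1}{\sqrt 2}(y - iJy) - i\delta\,\tfrac{1}{\sqrt 2}(x - iJx)\Big).
\]
First I would compute $[e_1, e_2]$ by bilinearity, using $[e_1, Y]$ and the fact that $[e_1, e_1] = 0$; concretely $[e_1, e_2] = \tfrac{1}{\delta'}[e_1, Y]$. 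Then $[e_1, Y] = \tfrac12\big[\,x - iJx,\ y - iJy\,\big]$, which expands into the four brackets $[x,y], [x,Jy], [Jx,y], [Jx,Jy]$ (with coefficients $1, -i, -i, -1$). Since $\mathfrak a$ is abelian we have $[x,y] = 0$, and by the integrability identity \eqref{integrability} applied to the pair $x,y$ together with $[x,y]=0$ one gets $[Jx,Jy] = J([Jx,y] - [Jy,x])$, i.e. the relation already recorded in \eqref{JxJy}: $[Jx,Jy] = \sigma Jx + \mathfrak a_J$.

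The key simplification is that we only care about the components of $[e_1,e_2]$ along $e_1$ and $e_2$, i.e. the projection onto $\mathrm{span}\{e_1,e_2\} = \mathrm{span}_{\mathbb C}\{x,Jx,y,Jy\}^{1,0}$, and everything landing in $\mathfrak a_J^{\,\mathbb C}$ contributes nothing to $C^1_{12}, C^2_{12}$. So modulo $\mathfrak a_J$ I would use \eqref{abcd} to replace $[Jx,x], [Jx,y], [Jy,x], [Jy,y]$ by their $\{x,y\}$-parts with coefficients $a,\dots,d'$, and use $[Jx,Jy] \equiv \sigma Jx$. Assembling, $[e_1,Y] \equiv \tfrac12\big( -i[x,Jy] - i[Jx,y] - \sigma Jx\big)$ modulo $\mathfrak a_J$; writing $[x,Jy] = -[Jy,x] \equiv -(a'x + b'y)$ and $[Jx,y] \equiv cx + dy$, and substituting the relations $d = -a$, $b' = -a$, $a' = c - \sigma$ from \eqref{abcd1}/\eqref{JxJy}, the $x$- and $y$-coefficients should collapse so that $[e_1,Y]$ has no $x$ or $y$ component, leaving only a multiple of $Jx$, namely $[e_1,Y] \equiv -\tfrac{\sigma}{2} Jx \pmod{\mathfrak a_J + \mathbb R x + \mathbb R y}$. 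Finally I re-express $Jx$ in terms of $e_1, \overline e_1$ via $Jx = \tfrac{i}{\sqrt 2}(e_1 - \overline e_1)$ (so only the $e_1$ part matters for $C^1_{12}$, contributing nothing to $C^2_{12}$), divide by $\delta'$, and read off $C^1_{12} = -\dfrac{i\sigma}{\sqrt2\,\delta'}$ and $C^2_{12} = 0$.

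The main obstacle is purely bookkeeping: carefully tracking which terms fall into $\mathfrak a_J$ versus $\mathrm{span}\{x,y\}$ versus $\mathrm{span}\{Jx,Jy\}$, and making sure the cancellations of the $a,b,c,d$-terms really occur as predicted by \eqref{abcd1} and \eqref{JxJy} rather than leaving a spurious $e_1$ or $e_2$ contribution. A useful consistency check is to verify that the computation is independent of whether $J$ is generic, half-generic, or degenerate — in all three cases $b' = -a$, $a' = c-\sigma$, $d = -a$ hold, so the $x,y$-coefficients must cancel uniformly, and indeed the final answer depends only on $\sigma$ and $\delta'$, as claimed. No deeper input (Jacobi identity beyond what is encoded in \eqref{abcd1}, or unimodularity) should be needed for this particular lemma.
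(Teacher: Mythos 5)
Your overall strategy is exactly the paper's: reduce to $\delta'[e_1,e_2]=[e_1,Y]=\tfrac12[x-iJx,\,y-iJy]$, kill $[x,y]$, use integrability to get $[Jx,Jy]\equiv\sigma Jx$, and use $a'=c-\sigma$, $b'=d$ to collapse the remaining terms modulo ${\mathfrak a}_J$. However, there is a concrete bookkeeping error in the collapse step, and if one follows your write-up literally it produces $C^1_{12}=-\tfrac{i\sigma}{2\sqrt2\,\delta'}$, i.e.\ half of the correct value. The $y$-coefficient of $[e_1,Y]$ does vanish (because $b'=d$), and that is what gives $C^2_{12}=0$; but the $x$-coefficient does \emph{not} vanish. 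One has
\begin{equation*}
2[e_1,Y]\;=\;-[Jx,Jy]+i\{[Jy,x]-[Jx,y]\}\;\equiv\;-\sigma Jx-i\sigma x\;=\;-i\sigma(x-iJx)\;=\;-i\sigma\sqrt2\,e_1 ,
\end{equation*}
so $[e_1,Y]\equiv-\tfrac{\sigma}{2}Jx-\tfrac{i\sigma}{2}x$, not $-\tfrac{\sigma}{2}Jx$ alone. Since $x=\tfrac{1}{\sqrt2}(e_1+\overline e_1)$ and $Jx=\tfrac{i}{\sqrt2}(e_1-\overline e_1)$, the $x$-term contributes to the $e_1$-coefficient exactly as much as the $Jx$-term (each gives $-\tfrac{i\sigma}{2\sqrt2}$), and it is also what makes the $\overline e_1$-coefficient cancel, as it must since $[e_1,e_2]\in{\mathfrak g}^{1,0}$. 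So your assertion that ``$[e_1,Y]$ has no $x$ or $y$ component'' is false for $x$, and discarding that term loses half of $C^1_{12}$. Your sanity check that the $\overline e_1$-part should be absent would in fact have caught this: with only the $-\tfrac{\sigma}{2}Jx$ term the bracket would have a nonzero $\overline e_1$-component, which is impossible. With the $-\tfrac{i\sigma}{2}x$ term restored, everything assembles to $-i\sigma\sqrt2\,e_1$ and the stated values of $C^1_{12}$ and $C^2_{12}$ follow; the rest of your reasoning (in particular that $e_2$ can only appear through the $y$- and $Jy$-parts, whose coefficients vanish) is correct.
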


\begin{proof}
By the choice of admissible frames, we have
\begin{eqnarray*}
2 \delta' [e_1, e_2] & = & 2 [e_1, Y] \ = \ [x-iJx, y-iJy] \\
& = & - [Jx, Jy] + i \{ [Jy,x] - [Jx, y] \} \\
& \equiv & - \sigma Jx + i (-\sigma x) \\
& = & -i \sigma  (x - iJx) \ = \ -i \sigma  \sqrt{2} e_1.
\end{eqnarray*}
Here and from now on `$\equiv$' means equality modular ${\mathfrak a}_J \otimes {\mathbb C}$. On the other hand, by (\ref{CD}), we know that $[e_1, e_2]\equiv C^1_{12}e_1 + C^2_{12}e_2$, hence the above computation gives us the expression of $C^1_{12}$ and $C^2_{12}$, and the proof of Lemma \ref{lemma6} is completed.
\end{proof}

Following similar pattern of computation, we have
\begin{eqnarray*}
 [e_1, \overline{e}_1] & = & -i [Jx, x] \ \equiv  \ -iax-iby \\
& = & - \frac{\,ia}{\sqrt{2}} (e_1+ \overline{e}_1)  -  \frac{\,ib}{\sqrt{2}} (Y+ \overline{Y})   \\
& = & - \frac{\,ia}{\sqrt{2}} (e_1+ \overline{e}_1)  -  \frac{\,ib}{\sqrt{2}} (i\delta e_1 + \delta' e_2 - i\delta \overline{e}_1 + \delta' \overline{e}_2)   \\ 
& = & \frac{b\delta -ia}{\sqrt{2}} e_1   - \frac{\,ib\delta'}{\sqrt{2}} e_2    -    \frac{b\delta +ia}{\sqrt{2}}   \overline{e}_1 -  \frac{\,ib\delta'}{\sqrt{2}} \overline{e}_2.
\end{eqnarray*}
Compare this with formula (\ref{CD}) for $[e_1, \overline{e}_1] $, we obtain the following
\begin{equation} \label{D1}
D^1_{11} = \frac{b\delta +ia}{\sqrt{2}} , \ \ \ \ D^1_{21} = \frac{\,ib\delta'}{\sqrt{2}} .
\end{equation}
Analogously, we compute
\begin{eqnarray}
 2 [Y, \overline{e}_1] & = & [y-iJy, x+iJx] \ = \  - [Jx,Jy] - i \{ [Jx, y] + [Jy, x] \} \nonumber \\
& \equiv  & -\sigma Jx -i (2c-\sigma)x +i2ay  \nonumber \\
& = & i\sigma (x+iJx) -i2c x + i2a y  \nonumber \\ 
& = & i\sigma \sqrt{2}\, \overline{e}_1 -ic \sqrt{2} (e_1 + \overline{e}_1) +ia \sqrt{2} (Y + \overline{Y}) \nonumber \\
& = & i\sigma \sqrt{2}\, \overline{e}_1 -ic \sqrt{2} (e_1 + \overline{e}_1) +ia \sqrt{2} (i\delta e_1 + \delta' e_2 - i\delta \overline{e}_1 + \delta'  \overline{e}_2)  \nonumber \\
& = & -(ic+ a \delta ) \sqrt{2} \,e_1   + ia\delta' \sqrt{2}\, e_2    + \{ a\delta + i(\sigma -c) \}   \sqrt{2}   \, \overline{e}_1 + ia \delta' \sqrt{2}\,  \overline{e}_2. \label{Ye1}
\end{eqnarray}
On the other hand, we have
\begin{eqnarray*}
 \sqrt{2}\, [Y, \overline{e}_1] & = & i\delta \sqrt{2}\, [e_1,\overline{e}_1] + \delta' \sqrt{2}\, [e_2,\overline{e}_1]   \nonumber \\
& \equiv  & i\delta \{ (b\delta -ia) e_1   - ib\delta' e_2    -    (b\delta +ia)  \overline{e}_1 -  ib\delta' \overline{e}_2 \} + \delta' \sqrt{2}\, [e_2,\overline{e}_1]   \nonumber \\
& = & \{ (a\delta + ib\delta^2) e_1   + b\delta \delta' e_2    +    (a\delta - ib\delta^2)  \overline{e}_1 + b\delta \delta' \overline{e}_2 \} + \delta' \sqrt{2}\, [e_2,\overline{e}_1]   .
\end{eqnarray*}
Combining this with (\ref{Ye1}), we obtain
\begin{equation*}
  \delta'\sqrt{2}\, [e_2, \overline{e}_1] \ \equiv  \  -(2a\delta +ic+ib\delta^2) e_1   + (- b\delta +ia)\delta'  e_2  +i(\sigma -c +b\delta^2)  \overline{e}_1    +(ia-b\delta )\delta' \overline{e}_2 . 
\end{equation*}
Compare it with formula (\ref{CD}) for $[e_2, \overline{e}_1]$, which is
\begin{equation*}
  [e_2, \overline{e}_1] \ \equiv  \  \overline{D^2_{11} } e_1   + \overline{D^2_{21} } e_2  - D^1_{12} \overline{e}_1    -  D^1_{22} \overline{e}_2 , 
\end{equation*}
we derive at the following
\begin{eqnarray}
 && D^2_{11} \ = \ \frac{1}{\sqrt{2}\delta'} (-2a\delta +ic+ib\delta^2), \ \ \ D^2_{21} \ = \  - \frac{1}{\sqrt{2}} (b\delta + ia ) , \label{D2}\\
&& D^1_{12} \ = \ \frac{1}{\sqrt{2}\delta'} i(c-\sigma -b\delta^2),  \ \ \ \ \ \ D^1_{22} \ = \ \frac{1}{\sqrt{2}}(b\delta -ia).  \label{D3}
\end{eqnarray}
Analogously, consider
\begin{eqnarray*}
 [Y, \overline{Y}] & = & -i [Jy, y] \ \equiv \ -ic'x -id' y   \nonumber \\
& =  & -\frac{ic'}{\sqrt{2}} (e_1 + \overline{e}_1) -\frac{id'}{\sqrt{2}} \big( i\delta e_1 +\delta'e_2 - i\delta \overline{e}_1 + \delta' \overline{e}_2\big)  \nonumber \\
& = & \frac{d'\delta -ic'}{\sqrt{2}} e_1 - \frac{id'\delta'}{\sqrt{2}} e_2 - \frac{d'\delta +ic'}{\sqrt{2}} \overline{e}_1  - \frac{id'\delta'}{\sqrt{2}} \overline{e}_2  , 
\end{eqnarray*}
On the other hand,
\begin{eqnarray*}
  [Y, \overline{Y}] & = & -i\delta [Y, \overline{e}_1] + \delta' [Y, \overline{e}_2] \ = \  -i\delta [Y, \overline{e}_1] +i\delta \delta' [e_1, \overline{e}_2]  + \delta'^2 [e_2, \overline{e}_2]   \nonumber \\
& =  &  -i\delta [Y, \overline{e}_1] +   i\delta  [e_1, \overline{Y - i\delta e_1} ]   + \delta'^2 [e_2, \overline{e}_2]  \nonumber \\
& =  &  -i\delta \{ [Y, \overline{e}_1] +  [\overline{Y},e_1] \} - \delta^2  [e_1, \overline{e}_1]    + \delta'^2 [e_2, \overline{e}_2] . 
\end{eqnarray*}
So by the expressions for $[Y, \overline{e}_1]$, $[e_1, \overline{e}_1]$, and $[Y, \overline{Y}]$, we get
\begin{eqnarray*}
  \delta'^2 [e_2, \overline{e}_2] & = & [Y, \overline{Y}]   + \delta^2 [e_1, \overline{e}_1 ] + i\delta \{ [Y, \overline{e}_1] +  [\overline{Y},e_1] \}  \nonumber \\
& =  &  [Y, \overline{Y}]   + \delta^2 [e_1, \overline{e}_1 ]   + \frac{\sigma \delta }{\sqrt{2}} e_1 - \frac{\sigma \delta }{\sqrt{2}} \overline{e}_1 \nonumber \\
& =  &   [Y, \overline{Y}]   +  \frac{(b\delta^2+\sigma) \delta -ia\delta^2 }{\sqrt{2}} e_1-   \frac{(b\delta^2+\sigma) \delta + ia\delta^2 }{\sqrt{2}} \overline{e}_1  - \frac{ib\delta^2\delta' }{\sqrt{2}} (e_2+\overline{e}_2) \nonumber \\
& =  & \frac{1}{\sqrt{2}} \big( -i (c' + a \delta^2) + \delta (d'+b\delta^2 +\sigma ) \big) e_1 - \frac{1}{\sqrt{2}} i(d'+b\delta^2)\delta' e_2 \nonumber \\
& & - \frac{1}{\sqrt{2}} \big( i (c'+a \delta^2) + \delta (d'+b\delta^2 + \sigma ) \big)  \overline{e}_1 - \frac{1}{\sqrt{2}} i(d'+b\delta^2)\delta' \overline{e}_1 .
\end{eqnarray*}
Compare this with formula (\ref{CD}) for $[e_2, \overline{e}_2] $, which is
$$ [e_2, \overline{e}_2] \ \equiv \ \overline{D^2_{12}} e_1 + \overline{D^2_{22}} e_2 - D^2_{12} \overline{e}_1 - D^2_{22} \overline{e}_2, $$
we obtain the last two entries of $E_2$:
\begin{equation} \label{D4}
D^2_{12} \ = \ \frac{1}{\sqrt{2}\delta'^2} \big( i(c'+a\delta^2) + \delta (d'+b\delta^2+ \sigma ) \big) , \ \ \ \ D^2_{22} \ = \   \frac{1}{\sqrt{2}\delta'} i(d'+ b\delta^2 ) .
\end{equation}
Now (\ref{C12}), (\ref{D1}), (\ref{D2}), (\ref{D3}), (\ref{D4}) give us the key ingredients for our later discussions. Since $C$ can be expressed in $D$, and $D^{\ast}_{\ast i}=0$ for all $i\geq 3$, the Jacobi identities (\ref{Jacobi}) become 
\begin{equation*} \label{Jacobi2}
[D_1, D_2] = - C^1_{12} D_1 = \frac{i\sigma}{\sqrt{2}\delta'} D_1, 
\end{equation*}
or equivalently,
\begin{eqnarray}
  && [E_1, E_2] \ = \ \frac{i\sigma}{\sqrt{2}\delta'} E_1, \ \ \ \ \  \ \ \ [Y_1, Y_2] \ = \ \frac{i\sigma}{\sqrt{2}\delta'} Y_1,   \label{Jacobi1}\\
  && V_1E_2+Y_1V_2-V_2E_1 - Y_2V_1 \ = \ \frac{i\sigma}{\sqrt{2}\delta'} V_1. \label{Jacobi2}
\end{eqnarray}
In particular, by Lemma \ref{lemma2} we know that both $E_1$ and $Y_1$ must be nilpotent matrices.

\vspace{0.3cm}

\section{Balanced metrics}

Again let $({\mathfrak g}, J, g)$ be a Hermitian Lie algebra which contains an abelian ideal ${\mathfrak a}$ of codimension $2$ such that (\ref{assumption}) is satisfied. In this section we will analyze the conditions for the metric $g$ to be balanced. 

Recall that the Lie algebra ${\mathfrak g}$ is said to be {\em unimodular} if $\mbox{tr}(\mbox{ad}_x)=0$ holds for all $x\in {\mathfrak g}$. This is independent of the choice of complex structures or metrics, and is a necessary condition for the corresponding Lie group $G$ to have any compact quotient. When $(J, g)$ is a Hermitian structure on ${\mathfrak g}$, then  ${\mathfrak g}$ is unimodular when and only when the equality in (\ref{unimodular}) is satisfied. In our case, the condition is equivalent by Lemma \ref{lemma4} to the following:
$$ \mbox{tr}(E_1) + \mbox{tr}(Y_1)  - \overline{\mbox{tr}(Y_1) } =0, \ \ \ \ C^1_{12} + \mbox{tr}(E_2) + \mbox{tr}(Y_2)  - \overline{\mbox{tr}(Y_2) } +2t \, \overline{\mbox{tr}(Y_1) } =0. $$
Since $E_1$ and $Y_1$ are both nilpotent, their trace vanish, so we conclude the following

\begin{lemma} \label{lemma7}
Let $({\mathfrak g}, J, g)$ be a Hermitian Lie algebra containing an abelian ideal ${\mathfrak a}$ of codimension $2$ satisfying (\ref{assumption}). Then ${\mathfrak g}$ is unimodular if and only if 
\begin{equation*}
\mbox{tr}(Y_2)  - \overline{\mbox{tr}(Y_2) } = \frac{i}{\sqrt{2}\delta' } \big( 2 \sigma - d' -c \big). 
\end{equation*}
\end{lemma}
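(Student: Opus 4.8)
\textbf{Proof proposal for Lemma \ref{lemma7}.}

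The plan is to start from the unimodularity criterion (\ref{unimodular}), which under an admissible frame and Lemma \ref{lemma4} reduces to the two scalar equations
\[
\sum_r \big( C^r_{r1} + D^r_{r1}\big) = 0, \qquad \sum_r \big( C^r_{r2} + D^r_{r2}\big) = 0,
\]
since $D^{\ast}_{\ast i} = 0$ for $i \geq 3$ and the corresponding $C$-components also vanish by Lemma \ref{lemma4}. The first step is to rewrite each of these sums in terms of the block data in (\ref{11}). Using the relations $C^{\ast}_{1i} = \overline{D^i_{\ast 1}}$ and $C^{\ast}_{2i} = \overline{D^i_{\ast 2}} - 2t\,\overline{D^i_{\ast 1}}$ from Lemma \ref{lemma4} (with $t = i\delta/\delta'$), together with the block decomposition $D_\alpha = \left[\begin{smallmatrix} E_\alpha & 0 \\ V_\alpha & Y_\alpha \end{smallmatrix}\right]$, the trace-type sums $\sum_r D^r_{r\alpha}$ become $\mathrm{tr}(E_\alpha) + \mathrm{tr}(Y_\alpha)$, and the sums $\sum_r C^r_{r\alpha}$ become conjugate-linear combinations of these same traces plus the lone off-block contribution $C^1_{12}$ (which appears in the $\alpha = 2$ equation because $C^{\ast}_{12}$ is nonzero). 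Carrying this out yields exactly the two displayed identities stated just before the lemma:
\[
\mathrm{tr}(E_1) + \mathrm{tr}(Y_1) - \overline{\mathrm{tr}(Y_1)} = 0, \qquad
C^1_{12} + \mathrm{tr}(E_2) + \mathrm{tr}(Y_2) - \overline{\mathrm{tr}(Y_2)} + 2t\,\overline{\mathrm{tr}(Y_1)} = 0.
\]

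The second step is to invoke the nilpotency of $E_1$ and $Y_1$, established at the end of \S 3 via Lemma \ref{lemma2} applied to the Jacobi identities (\ref{Jacobi1}): since $[E_1, E_2] = \frac{i\sigma}{\sqrt{2}\delta'} E_1$ and $[Y_1, Y_2] = \frac{i\sigma}{\sqrt{2}\delta'} Y_1$ with $\sigma \neq 0$, both $E_1$ and $Y_1$ are nilpotent, hence $\mathrm{tr}(E_1) = \mathrm{tr}(Y_1) = 0$. This makes the first equation automatically satisfied and kills the term $2t\,\overline{\mathrm{tr}(Y_1)}$ in the second, leaving
\[
C^1_{12} + \mathrm{tr}(E_2) + \mathrm{tr}(Y_2) - \overline{\mathrm{tr}(Y_2)} = 0.
\]

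The final step is to substitute the explicit values of $C^1_{12}$ and $\mathrm{tr}(E_2)$. From Lemma \ref{lemma6} we have $C^1_{12} = -\frac{i\sigma}{\sqrt{2}\delta'}$. For $\mathrm{tr}(E_2) = D^1_{12} + D^2_{22}$, I read off $D^1_{12} = \frac{1}{\sqrt{2}\delta'}i(c - \sigma - b\delta^2)$ from (\ref{D3}) and $D^2_{22} = \frac{1}{\sqrt{2}\delta'}i(d' + b\delta^2)$ from (\ref{D4}), so that the $b\delta^2$ terms cancel and $\mathrm{tr}(E_2) = \frac{i}{\sqrt{2}\delta'}(c - \sigma + d')$. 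Plugging in and solving for $\mathrm{tr}(Y_2) - \overline{\mathrm{tr}(Y_2)}$ gives
\[
\mathrm{tr}(Y_2) - \overline{\mathrm{tr}(Y_2)} = \frac{i\sigma}{\sqrt{2}\delta'} - \frac{i}{\sqrt{2}\delta'}(c - \sigma + d') = \frac{i}{\sqrt{2}\delta'}\big(2\sigma - d' - c\big),
\]
which is the claimed identity. There is no real obstacle here beyond bookkeeping; the only point requiring care is tracking the conjugations correctly when expanding $\sum_r C^r_{r\alpha}$ (the $C$'s are conjugates of $D$'s, so the $E_\alpha$-traces enter the first display through $\mathrm{tr}(E_1)$ directly from the $D$-part while $\overline{\mathrm{tr}(Y_1)}$ comes from the $C$-part), and making sure the $b\delta^2$ contributions in $\mathrm{tr}(E_2)$ genuinely cancel — which they do. $\qed$
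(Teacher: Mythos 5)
Your proposal is correct and follows exactly the route the paper takes: reduce the unimodularity condition (\ref{unimodular}) via Lemma \ref{lemma4} to the two trace identities displayed just before the lemma, kill $\mathrm{tr}(E_1)$ and $\mathrm{tr}(Y_1)$ by the nilpotency coming from (\ref{Jacobi1}) and Lemma \ref{lemma2}, and substitute $C^1_{12}$ from Lemma \ref{lemma6} together with $\mathrm{tr}(E_2)=D^1_{12}+D^2_{22}$ from (\ref{D3}) and (\ref{D4}). The bookkeeping, including the cancellation of the $b\delta^2$ terms, is carried out correctly.
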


Next we want to find out when will the metric $g$ be balanced. By Lemma \ref{lemma1} and Lemma \ref{lemma4}, we know that this happens when 
$ \sum_s D^s_{\ast s} =  D^1_{\ast 1} + D^2_{\ast 2} = 0$ holds
for any $\ast$. For all  $\ast \geq 3$, this means that $v^1_1 + v^2_2=0$, while for $\ast =1$ and $2$, it means that
\begin{eqnarray*}
&& 0 \ = \  D^1_{11}+D^2_{12} \ = \ \frac{1}{\sqrt{2}\delta'^2} \big( i(c'+a) + \delta (d'+b+\sigma )\big) ,\\
&& 0 \ = \  D^1_{21}+D^2_{22} \ = \ \frac{1}{\sqrt{2}\delta'} i (d'+b).
\end{eqnarray*}
Here we used formula (\ref{D1}), (\ref{D4}), and the fact that $\delta^2+\delta'^2=1$. Since $\sigma \neq 0$, the above means 
\begin{equation*} \label{balanced1}
c'=-a, \ \ \ d'=-b, \ \ \ \delta =0.
\end{equation*}
Combining this with (\ref{abcd1}) and (\ref{abcd2}), we see that $a=c=0$ and $b(b-2\sigma )=0$. So either $b=0$, and we are in the $J$ degenerate case, or $b=2\sigma$ so we are in the $J$ generic case. In the latter case $b$ and $\sigma$ happen to have the same sign. In summary, we have proved the following:

\begin{lemma} \label{lemma8}
Let ${\mathfrak g}$ be a unimodular Lie algebra containing an abelian ideal ${\mathfrak a}$ of codimension $2$, and $J$ a complex structure on ${\mathfrak g}$. Assume that (\ref{assumption}) holds. If $({\mathfrak g}, J)$ admits a balanced Hermitian metric $g$, then either $J$ is degenerate with $d'=0$, or $J$ is generic and with $b$ and $\sigma $ having the same sign.
\end{lemma}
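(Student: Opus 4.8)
The plan is to convert the balanced condition of Lemma~\ref{lemma1} into equations on the $D$-components, using the explicit formulas collected in \S3, and then to intersect the resulting constraints with the Jacobi relations (\ref{abcd1})--(\ref{abcd2}).

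First, by Lemma~\ref{lemma1} the metric $g$ is balanced precisely when $\sum_k D^k_{ik}=0$ for every $i$. By Lemma~\ref{lemma4} the only possibly nonzero structure constants among the $D$'s are those of $D_1,D_2$, which have the block shape (\ref{11}); hence $\sum_k D^k_{ik}=D^1_{i1}+D^2_{i2}$. For $i\ge 3$ this says $v^1_1+v^2_2=0$ (not needed here), while $i=1,2$ give two scalar equations in the entries of $E_1$ and $E_2$. Substituting the closed forms (\ref{D1}) and (\ref{D4}) and using $\delta^2+\delta'^2=1$ to clear denominators, these become
\[
i(c'+a)+\delta(d'+b+\sigma)=0,\qquad i(d'+b)=0 .
\]

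Next I would split into real and imaginary parts, using that $a,b,c,c',d',\delta,\sigma$ are real and $\sigma\neq 0$. The second equation gives $d'=-b$; plugging into the first gives $i(c'+a)+\delta\sigma=0$, whose real part forces $\delta=0$ (since $\sigma\neq0$) and whose imaginary part gives $c'=-a$. So any balanced metric forces $\delta=0$, $c'=-a$, $d'=-b$. Substituting $c'=-a$, $d'=-b$ into (\ref{abcd2}): the third relation becomes $-2a^2-bc=c^2$, and combining with $a^2+bc=0$ from (\ref{abcd1}) (so $bc=-a^2$) yields $-a^2=c^2$; as $a,c$ are real this forces $a=c=0$. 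Then the second relation of (\ref{abcd2}), namely $bd'=a^2-2b\sigma$, becomes $-b^2=-2b\sigma$, i.e. $b(b-2\sigma)=0$. Hence either $b=0$, in which case $a=b=c=0$, so $J$ is degenerate and $d'=-b=0$; or $b=2\sigma\neq0$, in which case $b\neq0$, so $J$ is generic and $b,\sigma$ have the same sign. In particular the half-generic case ($b=0$, $c\neq0$) admits no balanced metric.

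The whole argument is essentially bookkeeping with the formulas of \S3; the one spot needing a small observation is the elimination step $-2a^2-bc=c^2$ together with $bc=-a^2$, which gives $-a^2=c^2$ and hence $a=c=0$, where it is crucial that the constants $a,c$ are real. I do not anticipate any genuine obstacle beyond keeping track of the block structure (\ref{11}) and of the numerous explicit constants introduced in \S3.
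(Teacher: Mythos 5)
Your proposal is correct and follows essentially the same route as the paper: reduce the balanced condition to the two scalar equations $i(c'+a)+\delta(d'+b+\sigma)=0$ and $i(d'+b)=0$ via (\ref{D1}) and (\ref{D4}), deduce $\delta=0$, $c'=-a$, $d'=-b$, and then combine with (\ref{abcd1})--(\ref{abcd2}) to get $a=c=0$ and $b(b-2\sigma)=0$. The elimination step you spell out ($-a^2=c^2$ forcing $a=c=0$ over the reals) is exactly the detail the paper leaves implicit, and your bookkeeping is accurate.
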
 

We recall that $J$ is degenerate when and only when $b=c=0$, and $J$ is generic when and only when $b\neq 0$. We claim that in the $J$ generic case, the sign of the non-zero constants $\sigma b$ depends only on $J$, while when $J$ is degenerate, $d'=0$ is a condition independent of the choice of metrics. In other words, we have the  following two lemmas: 

\begin{lemma} \label{lemma9}
Let ${\mathfrak g}$ be a Lie algebra containing an abelian ideal ${\mathfrak a}$ of codimension $2$, and $J$ a complex structure on ${\mathfrak g}$. Assume that (\ref{assumption}) holds. Suppose $J$  is generic and $g$ is any Hermitian metric on $({\mathfrak g}, J)$, then the sign of  $\sigma b $ depends only on $J$, and is independent of the choice of $g$. 
\end{lemma}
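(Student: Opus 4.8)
The goal is to show that the sign of $\sigma b$ is an invariant of $J$ alone, not of the chosen compatible metric. The plan is to track how the constants $a,b,c,d,\sigma$ appearing in (\ref{abcd}) and (\ref{JxJy}) transform when we replace one compatible metric $g$ by another compatible metric $\tilde g$. Both metrics give rise to admissible frames, and the key point is that the vectors $x\in {\mathfrak b}\cap {\mathfrak a}_J^{\perp_g}$ and $\tilde x\in {\mathfrak b}\cap {\mathfrak a}_J^{\perp_{\tilde g}}$ (each determined up to sign) span the same one-dimensional image of ${\mathfrak b}$ in ${\mathfrak b}/{\mathfrak a}_J$; likewise $Jx$ and $J\tilde x$ have the same image in ${\mathfrak a}'/{\mathfrak a}$, and $y,\tilde y$ have the same image in ${\mathfrak a}/{\mathfrak b}$. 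So modulo the lower pieces of the flag ${\mathfrak a}_J\subsetneq {\mathfrak b}\subsetneq {\mathfrak a}\subsetneq {\mathfrak a}'$ we are comparing two bases of the same small quotient spaces, related by a change of basis whose effect on $A=\left[\begin{smallmatrix} a & b\\ c & d\end{smallmatrix}\right]$, $\sigma$ we can compute explicitly.

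Concretely, I would argue as follows. First I would observe that $\sigma$ is intrinsic to $J$ in the following sense: $[J{\mathfrak b},{\mathfrak a}']\subseteq {\mathfrak a}'$ (since $x\in{\mathfrak b}$ gives $Jx\in{\mathfrak a}'$ and ${\mathfrak a}'$ is an ideal — it contains $[{\mathfrak g},{\mathfrak g}]$), and $\mbox{ad}_{Jx}$ descends to the $1$-dimensional quotient ${\mathfrak g}/{\mathfrak a}'$... actually the relevant operator is $\mbox{ad}$ of a generator of ${\mathfrak g}/{\mathfrak a}$ acting on the image of $Jx$ in ${\mathfrak a}'/{\mathfrak a}$. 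The scalar $\sigma$ is the eigenvalue of this action, up to the normalization coming from how we scale $x$; so its sign changes only if we rescale $x$ by a negative factor or rescale the generator $Jy\bmod{\mathfrak a}$ by a negative factor. Next, for the generic case $b\ne 0$, I claim $b$ is, up to a positive factor, the component of $[Jx,x]$ along $y$ in ${\mathfrak a}/{\mathfrak b}$, i.e. the matrix entry of $\mbox{ad}_{Jx}:{\mathfrak b}/{\mathfrak a}_J \to {\mathfrak a}/{\mathfrak b}$ with respect to our chosen generators $x\bmod{\mathfrak a}_J$ and $y\bmod{\mathfrak b}$. Its sign flips precisely when we flip the sign of $x$ or of $y$. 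The point is then a parity count: in the product $\sigma b$, the sign of $x$ enters $\sigma$ (via the $Jx$ normalization) and enters $b$, while the sign of $Jy\bmod{\mathfrak a}$ (equivalently of $y$) enters $\sigma$ and enters $b$ as well — so in both independent sign choices the two factors flip together and $\sigma b$ is unchanged. One must also check that the more general allowed frame changes (those that add elements of ${\mathfrak a}_J$, ${\mathfrak b}$, ${\mathfrak a}$ to $x, Jx, y$, as forced by moving from $\perp_g$ to $\perp_{\tilde g}$, together with a positive rescaling to keep things unit length) do not alter $b$ or $\sigma$ at all, only the lower-order data $c,d,c',d'$ and the vectors $V_\alpha$, $Y_\alpha$. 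This is where equations (\ref{abcd1})--(\ref{abcd2}) and the nilpotency of $A$ help: once $b$ is pinned down as above, $a$ changes by the same additive shift that a basis change of the $2$-dimensional space $\mbox{span}\{x,y\}\bmod{\mathfrak a}_J$ induces, and $\sigma b$ is readily seen to be insensitive to it.

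The cleanest route, which I would actually pursue, is to bypass admissible frames for this particular statement and give a frame-free description: define the line $L={\mathfrak b}/{\mathfrak a}_J\subset {\mathfrak g}/{\mathfrak a}_J$ and the line $L'={\mathfrak a}'/{\mathfrak a}$, note $J$ induces an isomorphism $\bar J:L\to L'$ (by the last part of Lemma \ref{lemma3}), and let $w\in{\mathfrak g}$ project to a generator of ${\mathfrak g}/{\mathfrak a}$. Then $\mbox{ad}_w$ preserves ${\mathfrak a}'$ and acts on $L'$ by a scalar $\mu$; choosing $w$ so that $w\equiv Jy$ modulo ${\mathfrak a}$ makes $\mu=\sigma$ up to a factor from normalizing $Jx$. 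Meanwhile $\mbox{ad}_{Jx}$ maps ${\mathfrak a}$ into ${\mathfrak a}$, hence induces ${\mathfrak a}/{\mathfrak b}\to{\mathfrak a}/{\mathfrak b}$ and also ${\mathfrak b}\to{\mathfrak g}/{\mathfrak a}$ pieces; the "off-diagonal" map ${\mathfrak b}/{\mathfrak a}_J\to{\mathfrak a}/{\mathfrak b}$ carries exactly the datum $b$ (up to positive scalar). The product $\sigma b$ then corresponds to a composition of linear maps between these one-dimensional spaces read in a cyclic loop $L\to{\mathfrak a}/{\mathfrak b}\to\cdots\to L$ whose overall sign is independent of all the generator choices — this is the honest content of the lemma. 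The main obstacle is organizing the bookkeeping of which sign choices are independent and verifying that the metric-induced perpendicular-complement changes really do only alter the "strictly lower" pieces of the relevant operators; once the flag structure from Lemma \ref{lemma3} is set up carefully this becomes a short linear-algebra verification, and the constraints (\ref{abcd1}), (\ref{abcd2}) together with Lemma \ref{lemma2} guarantee there is no hidden coupling that could change the sign of $\sigma b$.
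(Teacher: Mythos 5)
Your plan is essentially the paper's proof: one writes $\tilde{x}=\lambda_1 x+u$, $\tilde{y}=\mu x+\lambda_2 y+v$ with $u,v\in{\mathfrak a}_J$ (using that ${\mathfrak b}/{\mathfrak a}_J$ is a line and ${\mathfrak a}_J$ is a $J$-invariant ideal) and computes $\tilde{\sigma}=\lambda_2\sigma$, $\tilde{b}=\lambda_1^2 b/\lambda_2$, hence $\tilde{\sigma}\tilde{b}=\lambda_1^2\,\sigma b$. Your only bookkeeping slip is the claim that the rescalings leave $b$ and $\sigma$ unchanged ``at all'' and that the sign of $x$ enters $\sigma$ --- in fact both constants do rescale and the sign of $x$ cancels out of both --- but the cancellation of $\lambda_2$ in the product is exactly what makes the lemma true, so carrying out your outline reduces to the same short computation as in the paper.
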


\begin{proof}
Under the metric $g$, let us take an orthonormal pair $\{ x, y\}$ in ${\mathfrak a}\cap {\mathfrak a}_J^{\perp}$ so that $x\in {\mathfrak b}$. Then the equations (modular the ideal ${\mathfrak a}_J$)
$$ [Jx, Jy] = \sigma Jx + {\mathfrak a}_J, \ \ \ \ \ \ [Jx, x] = ax + by +  {\mathfrak a}_J ,$$
give us the constants $\sigma$ and $b$. Now suppose $\tilde{g}$ is another Hermitian metric on $({\mathfrak g}, J)$, and we take an $\tilde{g}$-orthonormal pair $\{ \tilde{x},\tilde{y} \}$ in ${\mathfrak a}$ so that $\tilde{x}\in {\mathfrak b}$ and both $\tilde{x}$ and $\tilde{y}$ are perpendicular to ${\mathfrak a}_J$ with respect to $\tilde{g}$. We have
$$ [J\tilde{x}, J\tilde{y}] = \tilde{\sigma} J\tilde{x} + {\mathfrak a}_J, \ \ \ \ \ \ [J\tilde{x}, \tilde{x}] = \tilde{a}\tilde{x} + \tilde{b}\tilde{y} +  {\mathfrak a}_J .$$
Since both $x$ and $\tilde{x}$ are in ${\mathfrak b}$, we have
$$ \tilde{x}=\lambda_1 x+u, \ \ \ \tilde{y} = \mu x + \lambda_2 y + v,  $$
for some constants $\lambda_1$, $\lambda_2$, $\mu$ with $\lambda_1\lambda_2\neq 0$ and some $u$, $v\in {\mathfrak a}_J$. Plug into the above defining equations for $\sigma$ and $b$, we get
$$ \tilde{\sigma } = \lambda_2 \sigma, \ \ \ \ \tilde{b} = \frac{\lambda_1^2}{\lambda_2} b.   $$
From this we get $\tilde{\sigma}\tilde{b}=\lambda_1^2 \sigma b$. This shows that for all Hermitian metrics on $({\mathfrak g}, J)$, the product $\sigma b$ have the same sign, which is either positive or negative, depending only on $J$ and not on the metric. 
\end{proof}

\begin{lemma} \label{lemma10}
Let ${\mathfrak g}$ be a Lie algebra containing an abelian ideal ${\mathfrak a}$ of codimension $2$, and $J$ a complex structure on ${\mathfrak g}$. Assume that (\ref{assumption}) holds. Suppose $J$  is degenerate and $g$ is a Hermitian metric on $({\mathfrak g}, J)$ satisfying $d'=0$, then for any other Hermitian metric $\tilde{g}$ on $({\mathfrak g}, J)$, the corresponding constant $\tilde{d'}=0$ as well. 
\end{lemma}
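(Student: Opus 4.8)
The plan is to mimic the change-of-frame argument used in the proof of Lemma \ref{lemma9}, but now tracking the constant $d'$ instead of the product $\sigma b$. Recall that in the degenerate case $b=c=0$, so by (\ref{abcd1}) and (\ref{JxJy}) we also have $a=d=a'=b'=0$, i.e. the only surviving structure constants among $a,\dots,d'$ on the ``base'' ${\mathfrak g}/{\mathfrak a}_J$ are $\sigma\neq 0$, $c'$, and $d'$. Concretely, modulo ${\mathfrak a}_J$ we have
\[
[Jx,x]\equiv 0,\quad [Jx,y]\equiv 0,\quad [Jx,Jy]\equiv \sigma Jx,\quad [Jy,x]\equiv -\sigma x,\quad [Jy,y]\equiv c'x+d'y .
\]
So $d'$ is precisely the $y$-component of $[Jy,y]$ modulo ${\mathfrak a}_J$, and we must show it does not depend on which compatible metric we use to pick out the orthonormal pair $\{x,y\}$.

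First I would set up the two frames exactly as in Lemma \ref{lemma9}: for $g$ take $\{x,y\}$ orthonormal in ${\mathfrak a}\cap {\mathfrak a}_J^\perp$ with $x\in{\mathfrak b}$, and for $\tilde g$ take $\{\tilde x,\tilde y\}$ the analogous $\tilde g$-orthonormal pair with $\tilde x\in{\mathfrak b}$. Since $x,\tilde x\in{\mathfrak b}$ and $y,\tilde y\in{\mathfrak a}$, Lemma \ref{lemma3} gives the nesting ${\mathfrak a}_J\subsetneq{\mathfrak b}\subsetneq{\mathfrak a}$, so we may write $\tilde x=\lambda_1 x+u$ and $\tilde y=\mu x+\lambda_2 y+v$ with $\lambda_1\lambda_2\neq 0$ and $u,v\in{\mathfrak a}_J$. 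Because $J{\mathfrak a}_J={\mathfrak a}_J$, applying $J$ gives $J\tilde x=\lambda_1 Jx+Ju$ and $J\tilde y=\mu Jx+\lambda_2 Jy+Jv$ with $Ju,Jv\in{\mathfrak a}_J$. Now I would plug these into the defining relation $[J\tilde y,\tilde y]\equiv \tilde c'\tilde x+\tilde d'\tilde y$ and expand, repeatedly using that ${\mathfrak a}$ is abelian (so every bracket with two factors from ${\mathfrak a}$, in particular $[x,y],[x,v],[u,y],[u,v],[\tilde y, \tilde x]$-type terms, vanishes) and that ${\mathfrak a}_J$ is an ideal (so brackets of anything with $Ju,Jv\in{\mathfrak a}_J$ lie in ${\mathfrak a}_J$ and hence are $\equiv 0$). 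What remains is $[J\tilde y,\tilde y]\equiv \lambda_2^2[Jy,y]+\lambda_2\mu[Jy,x]\equiv \lambda_2^2(c'x+d'y)-\lambda_2\mu\sigma x$. Matching the $y$-coefficient against $\tilde d'\tilde y\equiv \tilde d'\lambda_2 y$ (the $\tilde x=\lambda_1 x+u$ contributes no $y$-component mod ${\mathfrak a}_J$), I get $\tilde d'\lambda_2=\lambda_2^2 d'$, hence $\tilde d'=\lambda_2 d'$.

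The conclusion is then immediate: if $d'=0$ for the metric $g$, then $\tilde d'=\lambda_2\cdot 0=0$ for every other compatible metric $\tilde g$, which is exactly the claim. The only point requiring a little care — and the step I expect to be the main bookkeeping obstacle — is the expansion of $[J\tilde y,\tilde y]$: one must be systematic in discarding the many terms that vanish (either because two arguments lie in the abelian ${\mathfrak a}$, or because one argument lies in the ideal ${\mathfrak a}_J$), and one must also check that the cross term $[\lambda_2 Jy,\mu x]$ is correctly identified via $[Jy,x]\equiv -\sigma x$ from (\ref{abcd}) and (\ref{JxJy}) specialized to the degenerate case. I would also remark, as in Lemma \ref{lemma9}, that the same computation shows $\tilde\sigma=\lambda_2\sigma$ (from $[J\tilde x,J\tilde y]\equiv\sigma\lambda_1 J\tilde x$-matching), so all the relevant degenerate-case data transforms by the single scalar $\lambda_2$, which makes the metric-independence of the vanishing locus $\{d'=0\}$ transparent.
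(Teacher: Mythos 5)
Your proof is correct, but it takes a more computational route than the paper. The paper's own argument is a one-line invariance observation: in the degenerate case the only brackets of $\mathfrak g$ with $\mathfrak a$ that survive modulo ${\mathfrak a}_J$ are $[Jy,x]\equiv -\sigma x$ and $[Jy,y]\equiv c'x+d'y$, so $d'=0$ holds if and only if $[{\mathfrak g},{\mathfrak a}]\subseteq {\mathfrak b}$ --- a condition phrased entirely in terms of $J$ and the ideals ${\mathfrak a}_J\subsetneq{\mathfrak b}\subsetneq{\mathfrak a}$, hence manifestly independent of the metric. You instead rerun the change-of-frame computation from Lemma \ref{lemma9}: writing $\tilde x=\lambda_1 x+u$, $\tilde y=\mu x+\lambda_2 y+v$ with $u,v\in{\mathfrak a}_J$ and expanding $[J\tilde y,\tilde y]$ (correctly discarding all terms that land in ${\mathfrak a}_J$ because ${\mathfrak a}$ is abelian, ${\mathfrak a}_J$ is a $J$-invariant ideal, and $[Jx,x]\equiv[Jx,y]\equiv 0$ in the degenerate case), you obtain the explicit transformation law $\tilde d'=\lambda_2 d'$, from which the vanishing of $d'$ is metric-independent. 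Both arguments are valid; yours yields slightly more information (the precise scaling of $d'$ and of $\sigma$ under a change of compatible metric, consistent with Lemma \ref{lemma9}), while the paper's is shorter and identifies the intrinsic meaning of the condition $d'=0$. The only point worth making explicit in your write-up is that $\lambda_2\neq 0$, which follows because $\tilde x,\tilde y$ must span ${\mathfrak a}/{\mathfrak a}_J$ while ${\mathfrak b}/{\mathfrak a}_J$ is only one-dimensional.
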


\begin{proof} By definition, in the degenerate case we have
$$ [Jx, x] \equiv [Jx, y] \equiv 0, \ \ \ [Jy,x] \equiv -\sigma x, \ \ \ [Jy, y] \equiv c'x+d'y, $$
where `$\equiv$' means equality modular ${\mathfrak a}_J$. Therefore $d'=0$ when and only when  $[{\mathfrak g} , {\mathfrak a}] \subseteq {\mathfrak b}$. This condition is clearly independent of the choice of the metric. This completes the proof of the lemma. 
\end{proof}

\vspace{0.3cm}

\section{Pluriclosed metrics}

Let $({\mathfrak g}, J, g)$ be a Hermitian Lie algebra which contains an abelian ideal ${\mathfrak a}$ of codimension $2$ such that (\ref{assumption}) is satisfied. In this section we will analyze the conditions for the metric $g$ to be pluriclosed. By Lemma \ref{lemma1}, the metric $g$ is pluriclosed if and only if equation (\ref{SKT}) is satisfied. Let $k=\alpha$ and $\ell = \beta$ be in $\{ 1,2\}$, $i$ and $j$ be in $\{ 3, \ldots , n\}$, then by Lemma \ref{lemma4}, equation (\ref{SKT}) becomes
$$ \sum_{r=1}^n \{ -T^r_{i\alpha} \overline{C^r_{j\beta} } -  T^j_{ir} \overline{D^{\alpha}_{r\beta} }  +   T^j_{\alpha r} \overline{D^{i}_{r\beta} } \} = 0. $$
Since $C^{\sigma}_{j\beta}=D^i_{\sigma \beta}=0$ for any $\sigma =1,2$, we know that for the first and third terms on the left hand side of the above equation, $r$ sums from $3$ to $n$. Similarly, by $T^a_{bc}= -C^a_{bc} - D^a_{bc} + D^a_{cb}$, we have $T^{\ast}_{ij}=0$ for any $i,j\geq 3$, thus for the second term in the above equation $r$ runs from $1$ to $2$. Therefore, the above equation becomes:
$$ \sum_{r=3}^n \{ ( C^r_{i\alpha} + D^r_{i\alpha}) \overline{C^r_{j\beta} }  +  ( -C^j_{\alpha r} + D^j_{r\alpha } ) \overline{D^{i}_{r\beta} } \} + \sum_{\sigma =1}^2  (C^j_{i\sigma }+  D^j_{i\sigma }) \overline{ D^{\alpha}_{\sigma \beta} }  =0.     $$ 
Again by Lemma \ref{lemma4}, we know that
$$ C^{\ast}_{\alpha i} = \overline{ D^i_{\ast \alpha } } -2t \delta_{\alpha 2} \overline{ D^i_{\ast 1} } . $$
Plug into the previous line we get
\begin{eqnarray*}
&& \sum_{r=3}^n \{  \big( - \overline{ D^i_{r \alpha } } + 2t \delta_{\alpha 2} \overline{ D^i_{r 1 } } + D^r_{i\alpha} \big)  \big(- D^j_{r\beta} + 2\bar{t} \delta_{\beta 2} D^j_{r1} \big) + 
\big( - \overline{ D^r_{j \alpha } } + 2t \delta_{\alpha 2} \overline{ D^r_{j 1 } } + D^j_{r\alpha } \big) \overline{D^{i}_{r\beta} } \, \} + \\
&& + \ \,\big(- \overline{ D^i_{j 1 } }  + D^j_{i1} \big) \,\overline{ D^{\alpha}_{1\beta} } + \big(- \overline{ D^i_{j 2 } }  + 2t \overline{ D^i_{j 1 } } + D^j_{i2}  \big) \,\overline{ D^{\alpha}_{2\beta} } \  \ = \ \ 0.
\end{eqnarray*}
Since $\bar{t}=-t$, the matrix form of the above equation becomes
\begin{eqnarray}
&& \big( Y_{\alpha} - Y_{\alpha}^{\ast} + 2t \delta_{\alpha 2} Y_1^{\ast}  \big)  \big(- Y_{\beta} -2t  \delta_{\beta 2} Y_1 \big) + 
 Y^{\ast}_{\beta} \big( Y_{\alpha} - Y_{\alpha}^{\ast}  + 2t \delta_{\alpha 2} Y_1^{\ast} \big)  +   \nonumber \\
&& + \ \, \overline{ D^{\alpha}_{1\beta} }  \, \big( Y_1-Y_1^{\ast}\big)   + \overline{ D^{\alpha}_{2\beta} }\, \big(  Y_2-Y_2^{\ast}  +2t Y_1^{\ast}  \big) \, \  \ = \ \ 0,  \label{SKT2}
\end{eqnarray}
for any $1\leq \alpha , \beta \leq 2$. Here $Y_{\alpha}=(D^j_{i\alpha})$ is the $(n-2)\times (n-2)$ matrix which is the lower right corner of $D_{\alpha}$, $t=\frac{i\delta}{\delta'}$, and $A^{\ast}$ stands for the conjugate transpose of $A$. The above equation is a necessary condition for the metric $g$ to be pluriclosed.

\begin{lemma} \label{lemma11}
Let ${\mathfrak g}$ be a unimodular Lie algebra containing an abelian ideal ${\mathfrak a}$ of codimension $2$, and $J$ a complex structure on ${\mathfrak g}$. Assume that (\ref{assumption}) holds. Suppose $J$  is generic and $g$ is a pluriclosed metric on $({\mathfrak g}, J)$. Then $\sigma b <0$. In particular, in this case  $({\mathfrak g}, J)$ cannot admit any balanced metric. 
\end{lemma}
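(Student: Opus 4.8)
## Proof Proposal for Lemma \ref{lemma11}

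The plan is to extract sign information about $\sigma b$ from the pluriclosed condition (\ref{SKT2}) by specializing to the indices $\alpha=\beta=1$, where the equation simplifies dramatically. With $\alpha=\beta=1$ all Kronecker deltas $\delta_{\alpha 2},\delta_{\beta 2}$ vanish, so (\ref{SKT2}) reduces to
\begin{equation*}
\big(Y_1-Y_1^{\ast}\big)\big(-Y_1\big) + Y_1^{\ast}\big(Y_1-Y_1^{\ast}\big) + \overline{D^1_{11}}\,\big(Y_1-Y_1^{\ast}\big) + \overline{D^1_{21}}\,\big(Y_2-Y_2^{\ast}+2tY_1^{\ast}\big) = 0.
\end{equation*}
First I would simplify the quadratic part: $(Y_1-Y_1^{\ast})(-Y_1) + Y_1^{\ast}(Y_1-Y_1^{\ast}) = -Y_1^2 + (Y_1^{\ast})^2 + [Y_1^{\ast},Y_1] \cdot(\text{sign check})$ — more carefully, $-(Y_1-Y_1^{\ast})Y_1 + Y_1^{\ast}(Y_1-Y_1^{\ast}) = -Y_1^2 + Y_1^{\ast}Y_1 + Y_1^{\ast}Y_1 - (Y_1^{\ast})^2 = -Y_1^2 - (Y_1^{\ast})^2 + 2Y_1^{\ast}Y_1$. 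Now take the trace of the whole identity. Since $\tr(Y_1^2) = \overline{\tr((Y_1^{\ast})^2)}$, the term $\tr(-Y_1^2-(Y_1^{\ast})^2)$ is $-2\re\,\tr(Y_1^2)$, and $Y_1$ is nilpotent by (\ref{Jacobi1}), so $\tr(Y_1^2)=0$ and $\tr(Y_1)=0$. Likewise $\tr(Y_1^{\ast}Y_1) = \|Y_1\|^2 = \sum|D^j_{i1}|^2 \ge 0$. The linear terms: $\tr(Y_1-Y_1^{\ast}) = 2i\,\im\,\tr(Y_1) = 0$; and $\tr(Y_2-Y_2^{\ast}) = 2i\,\im\,\tr(Y_2)$, with $\tr(2tY_1^{\ast}) = 2t\,\overline{\tr(Y_1)} = 0$.

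So taking the trace kills everything except $2\,\overline{D^1_{21}}\cdot i\,\im\,\tr(Y_2) + 2\|Y_1\|^2 = 0$, i.e. $\|Y_1\|^2 = -i\,\overline{D^1_{21}}\,\im\,\tr(Y_2)$. Next I would substitute the known values: in the generic case $D^1_{21} = \frac{ib\delta'}{\sqrt2}$ from (\ref{D1}), so $\overline{D^1_{21}} = -\frac{ib\delta'}{\sqrt2}$ and $-i\,\overline{D^1_{21}} = -\frac{b\delta'}{\sqrt2}$; meanwhile Lemma \ref{lemma7} gives $\im\,\tr(Y_2) = \frac{1}{2}(\tr(Y_2)-\overline{\tr(Y_2)})/i = \frac{1}{2\sqrt2\,\delta'}(2\sigma-d'-c)$, and in the generic case Lemma \ref{lemma5} gives $d' = -c-2\sigma$, hence $2\sigma-d'-c = 2\sigma-(-c-2\sigma)-c = 4\sigma$, so $\im\,\tr(Y_2) = \frac{4\sigma}{2\sqrt2\,\delta'} = \frac{\sqrt2\,\sigma}{\delta'}$. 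Plugging in: $\|Y_1\|^2 = -\frac{b\delta'}{\sqrt2}\cdot\frac{\sqrt2\,\sigma}{\delta'} = -b\sigma$. Since $\|Y_1\|^2 \ge 0$ and $\sigma b \ne 0$ (as $\sigma\ne0$ and $b\ne0$ in the generic case), we conclude $\sigma b < 0$. The final sentence of the lemma is then immediate from Lemma \ref{lemma8}: a balanced metric in the generic case would force $b$ and $\sigma$ to have the \emph{same} sign, contradicting $\sigma b<0$; so $({\mathfrak g},J)$ admits no balanced metric.

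The main obstacle I anticipate is purely bookkeeping: getting the conjugate-transpose signs and the factors of $t=\frac{i\delta}{\delta'}$ exactly right when expanding the quadratic block $(Y_1-Y_1^{\ast})(-Y_1)+Y_1^{\ast}(Y_1-Y_1^{\ast})$ and verifying that its trace contributes precisely $2\|Y_1\|^2$ (using nilpotency of $Y_1$ to discard $\tr Y_1^2$). A secondary point requiring care is confirming that $\im\,\tr(Y_2)$ is correctly read off from Lemma \ref{lemma7} with the generic-case substitution $d'=-c-2\sigma$ from Lemma \ref{lemma5}; an error of sign there would flip the conclusion. Everything else — nilpotency of $Y_1$, the formulas (\ref{D1}) for $D^1_{21}$, and the unimodularity identity — is already available in the excerpt, so no new ingredient is needed beyond the trace trick applied to the $\alpha=\beta=1$ specialization of (\ref{SKT2}).
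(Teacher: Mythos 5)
Your proposal is correct and follows essentially the same route as the paper: specialize (\ref{SKT2}) to $\alpha=\beta=1$, take the trace, use nilpotency of $Y_1$ together with the unimodularity identity of Lemma \ref{lemma7} and the generic-case relation $d'=-c-2\sigma$ to obtain $\sigma b=-\mathrm{tr}(Y_1^{\ast}Y_1)<0$, then conclude via Lemma \ref{lemma8}. The only cosmetic difference is that the paper first splits $Y_\alpha$ into Hermitian and skew-Hermitian parts and phrases the result as $\sigma b=-2\,\mathrm{tr}(S_1S_1^{\ast})$, which coincides with your $-\|Y_1\|^2$ since $\mathrm{tr}(Y_1^2)=0$.
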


\begin{proof}
Let us write $Y_{\alpha} = H_{\alpha} + S_{\alpha}$ where $H_{\alpha}$ is Hermitian symmetric and $S_{\alpha}$ is skew-Hermitian symmetric. Take $\alpha =\beta =1$ in equation (\ref{SKT2}), we get
\begin{equation} \label{HS}
[H_1, S_1] - 2S^2_1 + \overline{ D^1_{11}} \, S_1 + \overline{ D^1_{21}}  \, (S_2+tH_1-tS_1) \ = \ 0. 
\end{equation}
Since ${\mathfrak g}$ is unimodular and $b\neq 0$, by Lemma \ref{lemma7} and (\ref{abcd1}), (\ref{abcd2}), we have
$$ \mbox{tr} (S_2) = \frac{1}{2} \frac{i4\sigma}{\sqrt{2}\delta'}  = \frac{i\sqrt{2}\sigma }{\delta'}. $$
Since $Y_1$ is nilpotent hence traceless, by taking trace in (\ref{HS}) we get
$$ 2 \mbox{tr}(S_1^2) = \overline{ D^1_{21}} \, \mbox{tr}(S_2) = \frac{-ib\delta'}{\sqrt{2}} \frac{i\sqrt{2}\sigma }{\delta'} = \sigma b. $$ 
Thus $\sigma b = -2 \mbox{tr} (S_1S_1^{\ast}) \leq 0$, hence $<0$. This completes the proof of the lemma.
\end{proof}

Next let us turn our attention to the degenerate case. We prove the following:

\begin{lemma} \label{lemma12}
Let ${\mathfrak g}$ be a unimodular Lie algebra containing an abelian ideal ${\mathfrak a}$ of codimension $2$, and $J$ a complex structure on ${\mathfrak g}$. Assume that (\ref{assumption}) holds. Suppose $J$  is degenerate  and $g$ is a pluriclosed metric on $({\mathfrak g}, J)$. Then one must have $d'\neq 0$. In particular, in this case  $({\mathfrak g}, J)$ cannot admit any balanced metric. 
\end{lemma}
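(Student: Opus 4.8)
The plan is to mimic the structure of the proof of Lemma \ref{lemma11}: assume $g$ is pluriclosed with $d'=0$ and derive a contradiction by taking traces in a suitable specialization of the pluriclosed equation \eqref{SKT2}. First I would record what the degenerate case plus $d'=0$ forces on the structure constants. By Lemma \ref{lemma5} we have $a=b=c=0$ in the degenerate case, so by \eqref{abcd1} also $d=b'=0$, $a'=-\sigma$, and $c'$, $d'$ are free; imposing $d'=0$ then kills $D^2_{22}$ by \eqref{D4}, makes $D^2_{12}=\frac{i c'}{\sqrt{2}\delta'^2}$ (using $\delta^2+\delta'^2=1$ is not even needed since $a=b=0$), and by \eqref{D1}, \eqref{D2}, \eqref{D3} we get $D^1_{11}=0$, $D^1_{21}=0$, $D^1_{22}=0$, $D^2_{21}=0$, and $D^2_{11}=\frac{ic}{\sqrt{2}\delta'}\cdot 0$... let me recompute: with $a=b=0$ these give $D^1_{11}=0$, $D^1_{21}=0$, $D^2_{21}=0$, $D^1_{22}=0$, while $D^1_{12}=\frac{i(c-\sigma)}{\sqrt{2}\delta'}=\frac{-i\sigma}{\sqrt{2}\delta'}$ and $D^2_{11}=\frac{ic}{\sqrt{2}\delta'}\cdot\ldots$ — the point is that $E_1$ is essentially zero off the relevant entry, and crucially $v^1_\alpha$, $E_\alpha$ simplify dramatically. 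I would then feed these into the unimodularity relation of Lemma \ref{lemma7}, which with $d'=c=0$ reads $\operatorname{tr}(Y_2)-\overline{\operatorname{tr}(Y_2)}=\frac{i}{\sqrt 2\,\delta'}\,2\sigma$, i.e. $\operatorname{tr}(S_2)=\frac{i\sqrt 2\,\sigma}{\delta'}$ in the notation $Y_\alpha=H_\alpha+S_\alpha$ of Lemma \ref{lemma11}, which in particular is \emph{nonzero} since $\sigma\neq 0$.

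Next I would extract a trace identity forcing $\operatorname{tr}(S_2)=0$, contradicting the above. The natural place to look is \eqref{SKT2} with $\alpha=\beta=2$, or perhaps the pair $(\alpha,\beta)=(2,1)$ and $(1,2)$ combined. Taking $\alpha=\beta=2$ in \eqref{SKT2} and using that in the degenerate $d'=0$ case the relevant $D^2_{i2}$, $D^2_{1\beta}$ coefficients vanish or simplify, one gets an equation of Riccati-type in $Y_2$ (analogous to \eqref{HS}); I expect its Hermitian/skew-Hermitian decomposition to yield, upon taking trace and using $\operatorname{tr}(Y_1^2)\le 0$ with $Y_1$ nilpotent (hence $\operatorname{tr}(Y_1^2)$ could still be nonzero — here I would instead use that $Y_1$ is actually \emph{zero or} very constrained, since by \eqref{Jacobi1} $[Y_1,Y_2]=\frac{i\sigma}{\sqrt 2\delta'}Y_1$ and $Y_1$ is nilpotent), a relation of the form $\lambda\operatorname{tr}(S_2)=\mu\operatorname{tr}(S_2 S_2^\ast)$ or $\operatorname{tr}(S_2)=(\text{something manifestly }\le 0\text{ or }=0)$. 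If $\alpha=\beta=2$ alone is not enough, I would also use $\alpha=\beta=1$ (which in the degenerate $d'=0$ case forces $S_1$-type quantities to vanish, simplifying the $\alpha=\beta=2$ equation) and the Jacobi relations \eqref{Jacobi1}, \eqref{Jacobi2} to eliminate cross terms.

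The main obstacle I anticipate is that, unlike the generic case where the single relation $2\operatorname{tr}(S_1^2)=\sigma b$ falls out immediately, in the degenerate case with $d'=0$ the equation \eqref{SKT2} with $\alpha=\beta=2$ involves $Y_1$, $Y_2$, and the vector/corner data $V_\alpha$ in a more entangled way, so I may need to combine it with the $\alpha=1$ equations and with the nilpotency of $Y_1$ to isolate a clean scalar identity. A second subtlety is bookkeeping the factor $t=\frac{i\delta}{\delta'}$ and the off-diagonal $D^2_{11}$, $D^2_{12}$ terms, which do not vanish; I would handle this by noting that those terms multiply $(Y_1-Y_1^\ast)$ or $(Y_2-Y_2^\ast+2tY_1^\ast)$, so after taking trace and using $\operatorname{tr}(Y_1-Y_1^\ast)=0$ (true since $Y_1$ is traceless and so is $Y_1^\ast$) they contribute only through $\operatorname{tr}(S_2)$ and $\operatorname{tr}(S_1)$, keeping the final identity scalar. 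Once the contradiction is reached, the ``in particular'' clause is immediate: a balanced metric in the degenerate case would require $d'=0$ by Lemma \ref{lemma8}, which is impossible when a pluriclosed metric exists, and by Lemma \ref{lemma10} the condition $d'=0$ is metric-independent, so $({\mathfrak g},J)$ admits no balanced metric at all.
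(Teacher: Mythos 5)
Your plan follows essentially the same route as the paper's proof: in the degenerate case with $d'=0$ the only surviving entries of $E_1,E_2$ are $D^1_{12}$ and $D^2_{12}$, so the $\alpha=\beta=1$ instance of \eqref{SKT2} reduces to $[H_1,S_1]-2S_1^2=0$, whose trace gives $S_1=0$; then $Y_1$ is Hermitian and (by \eqref{Jacobi1} and Lemma \ref{lemma2}) nilpotent, hence $Y_1=0$, after which the $\alpha=\beta=2$ instance reduces to $[H_2,S_2]-2S_2^2=0$, whose trace gives $\operatorname{tr}(S_2S_2^{\ast})=0$, i.e.\ $S_2=0$, contradicting $\operatorname{tr}(S_2)=\tfrac{i\sigma}{\sqrt{2}\delta'}\neq 0$ from Lemma \ref{lemma7} (your value is off by a harmless factor of $2$). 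The steps you leave hedged do work out exactly as you anticipate, and your handling of the ``in particular'' clause via Lemmas \ref{lemma8} and \ref{lemma10} is the intended one.
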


\begin{proof} 
Let us assume the contrary that $d'=0$. We want to derive at a contradiction. Under this assumption, the only possibly non-zero entries of $E_1$ and $E_2$ are
$$ D^1_{12} =-\frac{i\sigma}{\sqrt{2}\delta'} , \ \ \ \  \ \ \ D^2_{12} =\frac{ic' +\delta \sigma}{\sqrt{2}\delta'^2}. $$
Hence equation (\ref{HS}) become 
$$ [H_1, S_1] - 2S^2_1 = 0.$$
Taking trace in the above equation, we get $0=- 2\mbox{tr}(S_1^2)= 2\mbox{tr}(S_1S_1^{\ast})$. Hence $S_1=0$, so $Y_1=H_1$ is Hermitian and can be diagonalized. Alternatively, 
since $S_1$ is skew-Hermitian, by taking a unitary change of  $\{ e_3, \ldots , e_n\}$ if necessary, we may assume that $S_1$ is diagonal, say
$$ S_1 = \left[ \begin{array}{lll} i\lambda_1I_{n_1} &  & \\ & \ddots & \\ & & i\lambda_rI_{n_r} \end{array} \right], $$
where $\lambda_1, \ldots , \lambda_r$ are distinct real constants, and $n_1+\cdots +n_r=n-2$. Now $[H_1,S_1]=2S_1^2$ is diagonal so $H_1$ must be block-diagonal, and by unitary change of basis in each block, we may assume that both $S_1$ and $H_1$ are diagonal, so $Y_1$ is diagonal. 

On the other hand, by the condition $[Y_1, Y_2]=\frac{i\sigma}{\sqrt{2}\delta'} Y_1$ and $\sigma \neq 0$, we  know by Lemma \ref{lemma2} that $Y_1$ must be nilpotent. Therefore we conclude that $Y_1=0$.  

Now the only non-trivial case of equation (\ref{SKT2}) occur when $\alpha=\beta =2$ and in this case the equation becomes 
$$ [H_2, S_2] - 2S^2_2 = 0.$$
Taking trace, we get $0 = 2\mbox{tr}(S_2^2) = - 2\mbox{tr}(S_2S_2^{\ast})$, hence $S_2=0$. On the other hand, since $c=d'=0$, by Lemma \ref{lemma7} we get
$$ \mbox{tr}(S_2) = \frac{i\sigma}{\sqrt{2}\delta'} \neq 0, $$
which is a contradiction. This shows that the constant $d'$ cannot be zero, namely, the existence of a pluriclosed metric on $({\mathfrak g}, J)$ with $J$ degenerate would force $[{\mathfrak g}, {\mathfrak a}] \not\subseteq {\mathfrak b}$. This completes the proof of the lemma.
\end{proof}

Now we are ready to prove Theorem \ref{thm2}.

\begin{proof}[{\bf Proof of Theorem \ref{thm2} when ${\mathfrak g}/{\mathfrak a}$ is non-abelian}] 
Given a unimodular Lie algebra ${\mathfrak g}$ which contains an abelian ideal ${\mathfrak a}$ of codimension $2$, let $J$ be a complex structure on ${\mathfrak g}$ with $J{\mathfrak a} \neq {\mathfrak a}$. If ${\mathfrak g}/{\mathfrak a}$ is not abelian, then $J$ must fall into one and exactly one of the three cases: generic, half-generic, or degenerate. By Lemma \ref{lemma8}, the presence of a balanced metric on $({\mathfrak g}, J)$ would rule out the half-generic case, and in the generic case $\sigma b>0$ while in the degenerate case $d'=0$. On the other hand, by Lemma \ref{lemma11} and Lemma \ref{lemma12}, we know that in either of these cases the presence of a pluriclosed metric would lead to a contradiction. So for any given $J$ on ${\mathfrak g}$, it cannot admit both a balanced metric and a pluriclosed metric simultaneously. This establishes the Fino-Vezzoni Conjecture for the `$J{\mathfrak a}\neq {\mathfrak a}$ and ${\mathfrak g}/{\mathfrak a}$ non-abelian' case. The  `$J{\mathfrak a}\neq {\mathfrak a}$ and ${\mathfrak g}/{\mathfrak a}$ abelian' case is analogous, and will be given in Appendix B, while the  `$J{\mathfrak a}={\mathfrak a}$' case was due to \cite{LiZ} and will be outlined in Appendix A. Putting all three parts together, we have established the proof of Theorem \ref{thm2} and Corollary \ref{cor3}. 
\end{proof}

\vspace{0.3cm}
\section{Appendix A: The $J{\mathfrak a} = {\mathfrak a}$ case}

The proof of Theorem \ref{thm1} was given in \cite{LiZ}. However, the reference is in Chinese, and we will include the proof here in this appendix for readers convenience.

Throughout this appendix,  ${\mathfrak g}$ will be a unimodular Lie algebra containing an abelian ideal ${\mathfrak a}$ of codimension $2$, and $J$ will be a complex structure on ${\mathfrak g}$ satisfying $J{\mathfrak a} = {\mathfrak a}$. Suppose  $g = \langle , \rangle $ is a Hermitian metric on $({\mathfrak g},J)$, namely an inner product on ${\mathfrak g}$ compatible with $J$. 

Since $J{\mathfrak a} = {\mathfrak a}$, as in \cite{GuoZ} we have {\em admissible frames}, which are unitary frame $e=\{ e_1, \ldots , e_n\}$ such that 
$$ {\mathfrak a} = \mbox{span}_{\mathbb R} \{ e_i+\overline{e}_i, \sqrt{-1}(e_i-\overline{e}_i); \ 2\leq i\leq n \}. $$
Under such a frame, the only possibly none-zero structure constants $C$ and $D$ are given by
$$ C^j_{1i}=X_{ij}, \ \ \ D^1_{11}=\lambda, \
 \ \ D^j_{i1}=Y_{ij}, \ \ \ \ D^1_{ij}=Z_{ij}, \ \ \ D^1_{i1}=v_i; \ \  \ \ \ \  1\leq i,j\leq n, $$
where $\lambda \geq 0$, $v\in {\mathbb C}^{n-1}$ is a column vector, and  $X$, $Y$, $Z$ are $(n-1)\times (n-1)$ complex matrices. The Jacobi identities become
\begin{equation}  \label{Jacobi1}
\left\{ \begin{array}{ll}  \lambda (X^{\ast} +Y) + [X^{\ast} ,Y] - Z\overline{Z} \, =  \, 0, & \\
\lambda Z - (Z\,^t\!X+YZ) \ = \ 0. &
\end{array} \right.
\end{equation} 
Here $X^{\ast}$ means the conjugate transpose of $X$. By \cite{GuoZ}, we have
\begin{equation} \label{character}
\left\{ \begin{array}{llll} {\mathfrak g} \ \mbox{unimodular} \ \Longleftrightarrow \  \lambda - \mbox{tr}(X) + \mbox{tr}(Y)  = 0 , &&&\\
{\mathfrak g} \  \mbox{K\"ahler} \ \Longleftrightarrow \  v=0, \ X=Y, \ ^t\!Z=Z , &&&\\
{\mathfrak g} \  \mbox{balanced} \ \Longleftrightarrow \  v=0, \ \mbox{tr}(X) =\mbox{tr}(Y)   , &&&\\
{\mathfrak g} \  \mbox{pluriclosed} \ \Longleftrightarrow \  \lambda B + B^{\ast}B + [X^{\ast},B] + \,^t\!Z\overline{Z} - Z \overline{Z}  = 0 , &&&
 \end{array}\right.
\end{equation}
where $B=Y-X$. We have

\begin{lemma} \label{lemmaA}
Let ${\mathfrak g}$ be a unimodular Lie algebra with abelian ideal ${\mathfrak a}$ of codimension $2$, and $J$ be a complex structure on ${\mathfrak g}$ saisfying $J{\mathfrak a}={\mathfrak a}$. If $g$, $\tilde{g}$ are $J$-compatible inner products on ${\mathfrak g}$, then there exist  $g$-admissible frame $e$ and $\tilde{g}$-admissible frame $\tilde{e}$ such that
$$ \tilde{e}_1 = pe_1+\sum_{k=2}^n a_ke_k, \ \ \ \tilde{e}_i=p_i e_i, \ \ \  2\leq i\leq n, $$
where $p, p_2, \ldots , p_n$ are positive constants, and $a_2, \ldots , a_n$ are complex constants. Furthermore, under these frames, the corresponding structure constants are related by
\begin{eqnarray}
&& \tilde{\lambda} = p\lambda, \ \ \ \tilde{X}=pPXP^{-1}, \ \ \ \tilde{Y}=pP^{-1}YP, \ \ \ \tilde{Z}=pPZP^{-1}, \label{eq:A2}\\
&& \tilde{v}=pP^{-1}(pv-\lambda \overline{a} +Y\overline{a} + Za ), \label{eq:A3}
\end{eqnarray}
where $P=\mbox{diag}\{ p_2, \ldots , p_n\}$ and  $a=\,^t\!(a_2, \ldots , a_n)$. 
\end{lemma}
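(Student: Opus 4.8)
\textbf{Proof proposal for Lemma \ref{lemmaA}.}

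The plan is to unravel what an admissible frame for $J{\mathfrak a}={\mathfrak a}$ actually constrains, and then track the freedom that remains. Recall that an admissible frame is a unitary frame $e=\{e_1,\dots,e_n\}$ of ${\mathfrak g}^{1,0}$ such that ${\mathfrak a}=\mathrm{span}_{\mathbb R}\{e_i+\overline e_i,\sqrt{-1}(e_i-\overline e_i);\ 2\le i\le n\}$; equivalently, the $J$-invariant subspace spanned over ${\mathbb C}$ by $e_2,\dots,e_n$ is exactly ${\mathfrak a}\cap J{\mathfrak a}={\mathfrak a}$ (since $J{\mathfrak a}={\mathfrak a}$), while $e_1$ spans the $J$-invariant orthogonal complement ${\mathfrak a}^{\perp_g}$. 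First I would fix a $g$-admissible frame $e$ once and for all. Given the second inner product $\tilde g$, the one-dimensional complex line ${\mathfrak a}^{\perp_{\tilde g}}\subset{\mathfrak g}^{1,0}$ is some complex line transverse to ${\mathfrak a}^{\mathbb C}$; choosing a $\tilde g$-unit vector in it gives $\tilde e_1=pe_1+\sum_{k\ge2}a_k e_k$ for suitable scalars, and after multiplying $e_1$ by a unit complex number we may assume $p>0$. On the ${\mathfrak a}$-side both frames must be unitary bases of $({\mathfrak a}^{\mathbb C})^{1,0}={\mathrm{span}}\{e_2,\dots,e_n\}$, so $\tilde e_i=\sum_j u_{ij}e_j$ for a unitary matrix $(u_{ij})$, $i,j\ge2$; I would then apply the residual freedom of replacing $e_2,\dots,e_n$ by any unitary combination of themselves — which preserves $g$-admissibility — to arrange that this change-of-basis matrix is diagonal, $\tilde e_i=p_ie_i$ with $p_i>0$ after absorbing phases. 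This proves the first assertion.

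Next I would compute the transformation of the structure constants directly from their definitions $C^k_{ij}=\langle[e_i,e_j],\overline e_k\rangle$, $D^k_{ij}=\langle[\overline e_j,e_k],e_i\rangle$ (extended bilinearly), substituting $\tilde e_1=pe_1+\sum_{k\ge2}a_ke_k$, $\tilde e_i=p_ie_i$ for $i\ge2$ and using bilinearity of the bracket together with the fact that ${\mathfrak a}$ is an abelian ideal, so $[e_i,e_j]=[e_i,\overline e_j]=0$ for $i,j\ge2$ — hence every nonzero bracket must have an $e_1$ or $\overline e_1$ leg, and the cross terms $a_k$ only interact through the $e_1$-slot. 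This is a bookkeeping computation: the matrix $P=\mathrm{diag}\{p_2,\dots,p_n\}$ conjugates the "$e_1$-adjoint" matrices $X,Y,Z$, the overall factor $p$ comes from the $e_1$-leg of $[\tilde e_1,\cdot]$, and the positions of $P$ versus $P^{-1}$ (and of $P$ versus its transpose in the $Z$ and $\,^tX$ formulas) are dictated by which index is the conjugated one in $C^j_{1i}$, $D^j_{i1}$, $D^1_{ij}$. The vector $\tilde v$ picks up, in addition to the conjugation by $P$ and the factor $p$, contributions from the $a_k$-terms: expanding $\langle[\overline{\tilde e}_1,\tilde e_1],\tilde e_i\rangle$ produces the $p v$ term from the pure $e_1$-part, a $-\lambda\overline a$ term from the interaction of $\overline e_1$ in $\overline{\tilde e}_1$ with the $e_1$ in the bracket via $D^1_{11}=\lambda$, and $Y\overline a$ and $Za$ terms from $D^j_{i1}$ and $D^1_{ij}$ respectively. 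I would present this as a short direct calculation rather than grinding every index.

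The main obstacle is getting the placement of $P$, $P^{-1}$, $\,^t(\cdot)$, and the conjugations in \eqref{eq:A2}–\eqref{eq:A3} exactly right, since $X$, $Y$, $Z$ sit in the structure constants with different index patterns (one lower index fixed at $1$ in $C^j_{1i}$ and $D^j_{i1}$, two free lower indices in $D^1_{ij}$) and the Hermitian inner product mixes a vector with a conjugate, so each matrix transforms by a different combination of $P$ on the left and right. The cleanest way to control this is to write each of $X$, $Y$, $Z$ intrinsically: $X$ and $Y$ as (up to the $e_1$-factor) the matrices of $\mathrm{ad}_{e_1}$ and $\mathrm{ad}_{\overline e_1}$ restricted to ${\mathfrak a}^{\mathbb C}$ in the appropriate slots, and $Z$ as the bilinear form $(e_i,e_j)\mapsto\langle[e_i,e_j],\overline e_1\rangle$ on ${\mathfrak a}^{\mathbb C}$; then a diagonal unitary change $e_i\mapsto p_ie_i$ acts on an operator by conjugation $P(\cdot)P^{-1}$ and on a bilinear form by $P(\cdot)\,^tP$, and since $P$ is real diagonal $\,^tP=P$ and the two look alike except where a transpose is genuinely present (as in the Jacobi relation $Z\,^tX$). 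Once this dictionary is fixed, \eqref{eq:A2} is immediate and \eqref{eq:A3} follows by collecting the four sources of $\tilde v$ listed above; consistency can be cross-checked against the Jacobi identities \eqref{Jacobi1} and the unimodular/Kähler/balanced/pluriclosed characterizations \eqref{character}, which must be preserved under the change of frame.
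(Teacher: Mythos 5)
Your proposal follows essentially the same route as the paper: fix a $g$-admissible frame, adjust $e_2,\dots,e_n$ so that positive rescalings become $\tilde g$-unitary, complete to a $\tilde g$-unitary frame and normalize the phase of $\tilde e_1$, then track the structure constants under the change of frame (the paper does this bookkeeping via the dual coframes and the structure equation rather than by expanding brackets directly, but that is an equivalent computation).

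One step, as you wrote it, is not correct: you claim that $\tilde e_i=\sum_j u_{ij}e_j$ with $(u_{ij})$ \emph{unitary}. Two bases that are orthonormal for \emph{different} inner products are related by a general invertible matrix, not a unitary one; and if the matrix really were unitary, diagonalizing it by a unitary conjugation would produce unimodular diagonal entries, never positive constants $p_i\neq 1$, so your subsequent "residual freedom" step is internally inconsistent as stated. What you actually need — and what the paper uses — is the simultaneous diagonalization of the two Hermitian inner products on ${\mathfrak a}^{1,0}$: diagonalize, by a $g$-unitary change of $\{e_2,\dots,e_n\}$, the positive-definite operator $A$ defined by $\tilde g(u,v)=g(Au,v)$; its positive eigenvalues supply the $p_i$ so that $\{p_2e_2,\dots,p_ne_n\}$ is $\tilde g$-unitary. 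With that substitution the rest of your plan (the transverse line ${\mathfrak a}^{\perp_{\tilde g}}$ giving $\tilde e_1$, the phase rotation making $p>0$, and the index bookkeeping for $X$, $Y$, $Z$, $v$ using that ${\mathfrak a}$ is an abelian ideal) goes through and reproduces \eqref{eq:A2}--\eqref{eq:A3}.
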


\begin{proof}
Let $e$ be an admissible frame for $g$. Then under appropriate unitary change of $\{ e_2, \ldots , e_n\}$, there are positive constants $p_2, \ldots ,p_n$ such that  $\{ p_2e_2, \ldots , p_ne_n\}$ is $\tilde{g}$-unitary. Extend it into a unitary frame $\tilde{e}$ for $\tilde{g}$, then its first element becomes 
$$ \tilde{e}_1 = pe_1 + \sum_{k=2}^n a_k e_k $$
with $p\neq 0$. Rotate  $\tilde{e}_1$ appropriately, we may assume that $p>0$. To see the relation between the two sets of structure constants, notice that, if we denote by $\{ \varphi_1, \ldots , \varphi_n\}$ the coframe dual to $e$, and write $\varphi$ for the column vector $\,^t\!(\varphi_2, \ldots, \varphi_n)$. Then the structure equation for the Hermitian metric $g$ becomes
\begin{equation} \label{eq:structureA}
\left\{ \begin{array}{ll} d\varphi_1  = \, - \lambda \varphi_1\overline{\varphi}_1 ,& \\
d\varphi \ = \  -  \varphi_1\overline{\varphi}_1\overline{v} - \varphi_1 \,^t\!X\varphi + \overline{\varphi}_1 \overline{Y}\varphi - \varphi_1 \overline{Z}\overline{\varphi}. & \end{array}\right.  
\end{equation}
Now for our admissible frames constructed before, their dual coframes are related by 
$$ \tilde{\varphi}_1=\frac{1}{p}\varphi_1, \ \ \ \tilde{\varphi}_i=\frac{1}{p_i}(\varphi_i-a_i\varphi_1), \ \ \ 2\leq i\leq n. $$
So by the structure equation (\ref{eq:structureA}) and its counterpart for $\tilde{g}$ we get the formula (\ref{eq:A2}) and (\ref{eq:A3}) relating the corresponding structure constants. This completes the proof of the lemma.
\end{proof}

\begin{proof}[{\bf Proof of Theorem \ref{thm1}.}] \  Assume that $({\mathfrak g},J)$ is as in Theorem \ref{thm1} and $g$, $\tilde{g}$ are Hermitian metrics on it so that $g$ is pluriclosed and $\tilde{g}$ is balanced. Fix a $g$-admissible frame $e$ and a $\tilde{g}$-admissible frame $\tilde{e}$ as in Lemma \ref{lemmaA}.

Since $\tilde{g}$ is balanced, we have $\mbox{tr}(\tilde{X})=\mbox{tr}(\tilde{Y})$. But ${\mathfrak g}$ is unimodular, thus $\tilde{\lambda}=\mbox{tr}(\tilde{X})-\mbox{tr}(\tilde{Y})=0$. By the first equality of (\ref{eq:A2}) we get $\lambda =0$. From (\ref{Jacobi1}) we deduce  $\mbox{tr}(Z\overline{Z})=0$. Now by taking trace in the last equation of (\ref{character}) we get
$$ \mbox{tr} (B^{\ast}B) + \mbox{tr} (\,^t\!Z \overline{Z}) = 0.$$
This gives us $Z=0$ and $B=0$, which is $X=Y$. 

Next, again since $\tilde{g}$ is balanced, we have $\tilde{v}=0$. So by (\ref{eq:A3}) we get $v=-\frac{1}{p}Y\overline{a}$. In summary, our pluriclosed metric $g$ now has structure constants:
$\lambda =0$, $Z=0$, $X=Y$ which is normal by (\ref{Jacobi1}), and with $v$ in the range of $X$. This metric $g$ in general might not be K\"ahler as $v$ may be  non-zero. Let us take
$$ e_1'=pe_1+\sum_{k=2}^n a_ke_k, \ \ \ e_i'=e_i, \ \ \ 2\leq i \leq n.$$
Let $g'$ be the Hermitian metric on $({\mathfrak g},J)$ so that $e'$ becomes unitary. Then by Lemma \ref{lemmaA} the structure constants for $g'$ are given by
$$ \lambda'=p\lambda =0, \ \ \ X'=pX, \ \ \ Y'=pY, \ \ \ Z'=pZ=0, \ \ \ v'=p(pv-Y\overline{a})=0. $$
In particular, $X'=Y'$, $\,^t\!Z'=Z'$, and $v'=0$. So the metric $g'$ is K\"ahler. This completes the proof of Theorem \ref{thm1}. 
\end{proof}

\vspace{0.3cm}
\section{Appendix B: The $J{\mathfrak a} \neq {\mathfrak a}$ and ${\mathfrak g} / {\mathfrak a}$ abelian case}

Throughout this appendix, we will assume that {\em  ${\mathfrak g}$ is a unimodular Lie algebra of real dimension $2n$, ${\mathfrak a}\subseteq {\mathfrak g}$ an abelian ideal of codimension $2$ with ${\mathfrak g} / {\mathfrak a}$  abelian, and $J$ a complex structure on ${\mathfrak g}$ with $J{\mathfrak a} \neq {\mathfrak a}$.} 

Write ${\mathfrak g}'=[{\mathfrak g},{\mathfrak g}]$. Since ${\mathfrak g} / {\mathfrak a}$ is abelian, we have ${\mathfrak g}'\subseteq {\mathfrak a}$ so ${\mathfrak g}$ is $2$-step solvable. In \cite{FSwann22} and \cite{FSwann}, Freibert and Swann systematically studied Hermitian structures on all $2$-step solvable Lie algebras, and in particular they proved the following:

\begin{theorem}[{\bf Freibert-Swann}]
Given any $2$-step solvable Lie algebra ${\mathfrak g}$ and any complex structure $J$ on ${\mathfrak g}$,  then the Fino-Vezzoni Conjecture holds if $J$ belongs to one  of the following three pure types:
\begin{itemize}
\item  Pure Type I: \ \, $J{\mathfrak g}'\cap {\mathfrak g}'=0$;
\item  Pure Type II: \, $J{\mathfrak g}'={\mathfrak g}'$;
\item  Pure Type III: $J{\mathfrak g}'+{\mathfrak g}'={\mathfrak g}$.
\end{itemize}
\end{theorem}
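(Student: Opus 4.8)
The plan is to reduce the assertion, for each of the three pure types, to the structural analysis of $2$-step solvable Hermitian Lie algebras and then exploit the specific interaction between $J$ and $\mathfrak{g}'=[\mathfrak{g},\mathfrak{g}]$. First I would fix a Hermitian structure $(J,g)$ on $\mathfrak g$ and decompose $\mathfrak g$ adapted to $\mathfrak g'$ and $J\mathfrak g'$: since $\mathfrak g$ is $2$-step solvable, $\mathfrak g'$ is an abelian ideal and $\operatorname{ad}_x|_{\mathfrak g'}$ generates a family of commuting operators as $x$ ranges over $\mathfrak g$; the unimodularity hypothesis forces $\operatorname{tr}(\operatorname{ad}_x)=0$ for all $x$. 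In each pure type the subspace $J\mathfrak g'$ sits in a rigidly prescribed position relative to $\mathfrak g'$, which lets one choose a unitary frame in which the structure constants $C,D$ from \eqref{CD} have a block form analogous to Lemma~\ref{lemma4}. I would then write down the balanced condition \eqref{balanced} and the pluriclosed condition \eqref{SKT} in this frame.

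The core of each case is a trace identity, in the spirit of Lemma~\ref{lemma11} and Lemma~\ref{lemma12}: one extracts from \eqref{SKT} a relation of the form (Hermitian bracket term) $-\,2S S^{\ast}$ type expression, takes the trace, and finds that the pluriclosed hypothesis forces the vanishing of a sum of squared norms, hence of certain skew-Hermitian blocks, while the balanced hypothesis (together with unimodularity) forces a nonzero value for the trace of the very same block. For Pure Type~II ($J\mathfrak g'=\mathfrak g'$) this is essentially the $J\mathfrak a=\mathfrak a$ analysis of Appendix~A transported to $\mathfrak g'$ in place of $\mathfrak a$: coexistence of the two metrics yields $\lambda=0$, $Z=0$, $X=Y$ normal, and $v$ in the range of $X$, after which an explicit change of frame produces a K\"ahler metric. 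For Pure Type~I ($J\mathfrak g'\cap\mathfrak g'=0$) and Pure Type~III ($J\mathfrak g'+\mathfrak g'=\mathfrak g$), the two subspaces are in ``general position,'' so no K\"ahler metric exists on $(\mathfrak g,J)$ unless $\mathfrak g'=0$; here the goal is instead to show the two special metrics cannot coexist, again via the trace argument, concluding that the balanced-plus-pluriclosed hypothesis forces $\mathfrak g'=0$, i.e.\ $\mathfrak g$ abelian and the metric flat K\"ahler.

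The steps, in order, would be: (i) establish the adapted unitary frame and the block form of $C,D$ in each pure type; (ii) specialize the Jacobi identities \eqref{Jacobi} to obtain the commutation relations among the blocks, together with nilpotency of the relevant blocks via Lemma~\ref{lemma2}; (iii) rewrite \eqref{balanced} and \eqref{SKT} as matrix equations in these blocks; (iv) take traces to derive the sign/vanishing contradiction (Types I, III) or the explicit normal-form reduction (Type II); (v) in Type II, perform the frame change producing a K\"ahler metric. The main obstacle I anticipate is step (i)--(ii): unlike the codimension-two abelian ideal case, here $\mathfrak g'$ can be higher-dimensional and $\dim\mathfrak g/\mathfrak g'$ is unconstrained, so the frame adapted to $J\mathfrak g'$ versus $\mathfrak g'$ must be chosen carefully and the resulting block structure is genuinely larger; keeping the bookkeeping of which structure constants survive, and verifying that the trace identities still close, is where the real work lies. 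A secondary subtlety is handling the ``mixed'' part of $\mathfrak g$ not seen by $\mathfrak g'$ or $J\mathfrak g'$ (present in Types I and III), which contributes additional blocks to \eqref{SKT} that must be shown not to spoil the trace computation.
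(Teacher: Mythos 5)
This statement is not proved in the paper at all: it is quoted as an external result of Freibert and Swann (\cite{FSwann22}, \cite{FSwann}) and is used in Appendix~B only as context, to explain why the authors must handle the non-pure-type case themselves. So there is no internal proof to compare your proposal against, and your outline should be judged on its own merits as an attempted reconstruction of the Freibert--Swann argument.

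On those merits there are two genuine gaps. First, the entire computational machinery you propose to transport (the admissible frames of Lemma~\ref{lemma4}, the block matrices $E_\alpha, Y_\alpha, V_\alpha$, the single trace identities of Lemmas~\ref{lemma11} and \ref{lemma12}, and the $\lambda, X, Y, Z, v$ normal form of Appendix~A) is built on the hypothesis that the abelian ideal has codimension $2$, so that the complementary directions contribute only one or two matrix blocks. For a general $2$-step solvable $\mathfrak g$, the commutator $\mathfrak g'$ is an abelian ideal of arbitrary codimension $k$, and one gets $k$ commuting families of blocks with a full system of Jacobi identities among them; nothing in the paper tells you that the trace computation ``closes'' in that setting, and this is precisely where Freibert and Swann need their shear formalism and a simultaneous normal form for the whole commuting family $\{\operatorname{ad}_x|_{\mathfrak g'}\}$. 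You name this as ``the real work'' but do not supply it, so the proposal is a plan rather than a proof. Second, your intended conclusion for Pure Types I and III --- that coexistence of a balanced and a pluriclosed metric forces $\mathfrak g'=0$ --- is false as stated. Unimodular $2$-step solvable Lie algebras carrying flat K\"ahler metrics with $\mathfrak g'\neq 0$ exist (e.g.\ $\mathbb R^k\ltimes\mathbb R^m$ with skew-adjoint action, such as $\mathfrak e(2)\oplus\mathbb R$), and such a K\"ahler metric is simultaneously balanced and pluriclosed; depending on the position of $J$ relative to $\mathfrak g'$ these can fall into Type I or III. The correct target in those cases is the existence of a K\"ahler metric, not the vanishing of $\mathfrak g'$, so the contradiction you aim for cannot be the right endpoint and the trace argument would have to be replaced by a normal-form reduction there as well.
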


In our case $J$ might not of pure types, so we cannot directly applying their result here, even though their method is much more powerful and is dealing with a much larger class of Lie algebras. Write ${\mathfrak a}_J={\mathfrak a} \cap J{\mathfrak a}$ as before. Let us introduce the following:

\begin{definition}[{\bf The rank of $({\mathfrak g},J)$}]
It is the integer: $r_0=\dim_{\mathbb R}\{({\mathfrak g}'+{\mathfrak a}_J)/{\mathfrak a}_J\} \in \{ 0,1,2\} $. 
\end{definition}

Take any $V\cong {\mathbb R}^2$ so that ${\mathfrak a}={\mathfrak a}_J \oplus V$ and any basis $\{ x,y\}$ in $V$. Then ${\mathfrak g}={\mathfrak a} \oplus JV$, and all the non-trivial Lie bracket info are given by $Jx$ and $Jy$. Since $[x,y]=0$, the integrability condition for $J$ gives us
$$ [Jx,Jy] = J([Jx,y]-[Jy,x]) \in {\mathfrak a}_J. $$ 
As in (\ref{abcd}), let us write

\begin{equation} \label{abcdA}
\left\{ \begin{split} 
[Jx, x] =  ax + by + {\mathfrak a}_J ;\ \,\\ 
[Jx, y] =  cx + dy + {\mathfrak a}_J ;\ \,\\ 
[Jy, x] =  cx + dy + {\mathfrak a}_J ; \ \,\\
\ [Jy, y] =  c'x + d'y + {\mathfrak a}_J,
\end{split}
\right.
\end{equation}
where $a,b,c,d,c',d'$ are real constants, with matrices
$$ A_x=\left[ \begin{array}{ll} a & b \\ c & d \end{array} \right] \ \ \ \mbox{and} \ \ \ A_y=\left[ \begin{array}{ll} c & d \\ c' & d' \end{array} \right] $$
representing the induced action of $\mbox{ad}_{Jx}$ and $\mbox{ad}_{Jy}$ on $V\cong {\mathfrak a}/{\mathfrak a}_J$. Clearly, $[A_x, A_y]=0$. Replace $\{ x,y\}$ by another basis of $V$ if necessary, we may always assume that
\begin{equation} \label{traceA}
\mbox{tr}(A_x) =0.
\end{equation}
Namely, we have $d=-a$ from now on. Note that this is not the most convenient choice, but we made it here simply to be in consistence with \S 3 so the computations can be carried over. 

\begin{remark}
When $r_0=2$, the choice (\ref{traceA}) will uniquely determine $x\in V$  up to a constant multiple. In particular, the sign of $\,\det (A_x)$ depends only on $J$.
\end{remark}

This is because ${\mathfrak h}=\mbox{span}\{ A_x, A_y\}$ forms a $2$-dimensional abelian Lie subalgebra in ${\mathfrak g}{\mathfrak l}(2,{\mathbb R})$, so it must  contain a basis of the form $\{ I, X\}$ with $\mbox{tr}(X)=0$. The commutativity $[A_x,A_y]=0$ becomes
\begin{equation} \label{abcdB}
\left\{ \begin{split} \,ac+ bc' = 0; \ \ \ \ \ \, \\
\ b(d' -c)= 2a^2 ;\ \ \,\\
 c (d'-c) = -2ac' . 
\end{split} \right.
\end{equation}
This is parallel to (\ref{abcd2}), except now our $\sigma =0$ while $A_x$ might not be nilpotent, in other words $a^2+bc$ may not be $0$ anymore. 

Next, let $g=\langle , \rangle $ be an inner product on ${\mathfrak g}$ compatible with $J$. Let us choose $V$ to be the $g$-orthogonal complement of ${\mathfrak a}_J$ in ${\mathfrak a}$ and $\{ x,y\}$ be an orthonormal basis in $V$, with (\ref{traceA}) satisfied. Note that when $r_0=2$ this will uniquely determine $x$ (up to $\pm$ sign). Let $\delta =\langle Jx, y\rangle \in (-1,1)$ as before, which measures the angle between $V$ and $JV$, and let $\delta'=\sqrt{1-\delta^2} \in (0,1]$.  

\begin{definition} [{\bf Admissible frames for $({\mathfrak g},J,g)$}] A unitary basis $e$ of ${\mathfrak g}^{1,0}$ is called an admissible frame for ${\mathfrak g}$, if   $\{ e_3, \ldots , e_n\}$ spans ${\mathfrak a}_J^{1,0}$, $e_1=\frac{1}{\sqrt{2}}(x-iJx)$, and $Y=i\delta e_1+\delta'e_2$ where $Y=\frac{1}{\sqrt{2}}(y-iJy)$. Here $\{ x,y\}$ is an orthonormal basis of $\,V:={\mathfrak a}_J^{\perp} \cap {\mathfrak a}$ with $\mbox{tr}(A_x)=0$.
\end{definition}

Note that the situation here is strictly analogous to \S 3, with the only exception that we have $\sigma =0$ here but possibly $a^2+bc\neq 0$, as $A_x$ may no longer be nilpotent. Under an admissible frame $e$, Lemma \ref{lemma4} still holds, and we have $C^1_{12}=C^2_{12}=0$. Also, the formula (\ref{D1}), (\ref{D2}), (\ref{D3}), (\ref{D4}) are still valid (with $\sigma =0$). In summary, we have the following:

\begin{lemma} \label{lemmaB}
Let $J$ be a complex structure on a Lie algebra ${\mathfrak g}$ which contains   an abelian ideal ${\mathfrak a}$ of codimension $2$. Assume that $J{\mathfrak a}\neq {\mathfrak a}$ and ${\mathfrak g}/{\mathfrak a}$ is abelian. If $e$ is an admissible frame, then the structure constants $C$ and $D$ satisfy the conclusion of Lemma \ref{lemma4}, with $D$ given by (\ref{11}) and 
\begin{equation} \label{eq:lemmaB}
\left\{ \begin{array}{llll}
 D^1_{11}=\frac{1}{\sqrt{2}} (b\delta +ia), \ \ \  D^1_{21}=\frac{1}{\sqrt{2}} ib\delta', &&& \\
 D^2_{11}=\frac{1}{\sqrt{2}\delta'} \{ -2a\delta +i(c +b\delta^2)\}, \ \ \  D^2_{21}=\frac{1}{\sqrt{2}} (-b\delta -ia), &&& \\
 D^1_{12}=\frac{1}{\sqrt{2}\delta’} i(c -b\delta^2), \ \ \  D^1_{22}=\frac{1}{\sqrt{2}} (b\delta -ia), &&& \\
 D^2_{12}=\frac{1}{\sqrt{2}\delta'^2} \{ \delta(d'+b\delta^2) +i(c'+a\delta^2)\}, \ \ \  D^2_{22}=\frac{1}{\sqrt{2}\delta'} i(d'+b\delta^2). &&&
\end{array}\right.
\end{equation}
\end{lemma}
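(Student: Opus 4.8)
The plan is to re-run the computations of \S 3 essentially verbatim, observing that the only role played there by the constant $\sigma$ was through the relation $[Jx,Jy]=\sigma Jx+{\mathfrak a}_J$, and that in the present setting this relation holds with $\sigma=0$. First I would note that the proof of Lemma \ref{lemma4} used nothing beyond the facts that ${\mathfrak a}$ is abelian and is an ideal of ${\mathfrak g}$; since both hypotheses persist here, that lemma applies word for word, which gives all the vanishing statements for the $C$- and $D$-components with indices $\geq 3$, the expressions of the remaining $C$-components in terms of the $D$-components, and in particular the block form (\ref{11}) of $D$.

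Next I would establish that $[Jx,Jy]\in{\mathfrak a}_J$, i.e.\ that the constant $\sigma$ of \S 3 is $0$ here. Indeed, since ${\mathfrak g}/{\mathfrak a}$ is abelian we have $[Jx,Jy]\in{\mathfrak a}$, while the integrability condition (\ref{integrability}) applied to the pair $(Jx,Jy)$ together with $[x,y]=0$ gives $[Jx,Jy]=J\big([Jx,y]-[Jy,x]\big)$; by the choice (\ref{abcdA}) the difference $[Jx,y]-[Jy,x]$ lies in ${\mathfrak a}_J$, so $[Jx,Jy]\in J{\mathfrak a}_J={\mathfrak a}_J$. In particular $C^1_{12}=C^2_{12}=0$ follows from the computation of Lemma \ref{lemma6} specialized to $\sigma=0$.

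It then remains to compute the eight entries $D^1_{11},D^1_{21},D^2_{11},D^2_{21},D^1_{12},D^1_{22},D^2_{12},D^2_{22}$ making up $E_1$ and $E_2$. For this I would carry out the same chain of bracket computations as in \S 3: evaluate $[e_1,\overline{e}_1]=-i[Jx,x]$, then $[Y,\overline{e}_1]$ from $2[Y,\overline{e}_1]=[y-iJy,\,x+iJx]$ and solve for $[e_2,\overline{e}_1]$ via $\sqrt{2}\,[Y,\overline{e}_1]=i\delta\sqrt{2}\,[e_1,\overline{e}_1]+\delta'\sqrt{2}\,[e_2,\overline{e}_1]$, and finally $[Y,\overline{Y}]=-i[Jy,y]$ together with the identity $\delta'^2[e_2,\overline{e}_2]=[Y,\overline{Y}]+\delta^2[e_1,\overline{e}_1]+i\delta\{[Y,\overline{e}_1]+[\overline{Y},e_1]\}$; in each case one compares with (\ref{CD}) and reads off the structure constants using (\ref{abcdA}) with $d=-a$. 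This reproduces exactly the identities (\ref{D1})--(\ref{D4}) evaluated at $\sigma=0$, which is precisely (\ref{eq:lemmaB}). The one point to keep in mind — and the only genuine difference from \S 3 — is that here $A_x$ need not be nilpotent, so $a^2+bc$ need not vanish and the commutativity $[A_x,A_y]=0$ yields the relations (\ref{abcdB}) rather than (\ref{abcd2}); but none of the derivations of (\ref{D1})--(\ref{D4}) invoked the nilpotency of $A_x$ or the Jacobi identity, so this causes no difficulty. I do not expect any real obstacle here: the lemma is bookkeeping, and its substance has already been carried out in \S 3.
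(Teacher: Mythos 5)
Your proposal is correct and follows essentially the same route as the paper, which likewise disposes of this lemma by observing that Lemma \ref{lemma4} only used that ${\mathfrak a}$ is an abelian ideal and that the bracket computations yielding (\ref{D1})--(\ref{D4}) carry over verbatim with $\sigma=0$, without invoking nilpotency of $A_x$ or the Jacobi identity. The only cosmetic point is that your justification of $[Jx,y]-[Jy,x]\in{\mathfrak a}_J$ ``by the choice (\ref{abcdA})'' reads slightly circular (the equality of the second and third lines of (\ref{abcdA}) is itself a consequence of this fact); the clean argument is that this difference lies in ${\mathfrak a}$ and its $J$-image equals $[Jx,Jy]\in[{\mathfrak g},{\mathfrak g}]\subseteq{\mathfrak a}$, so it lies in ${\mathfrak a}\cap J{\mathfrak a}={\mathfrak a}_J$.
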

For convenience of our later discussions, let us introduce the notation:
$$ Z_1=Y_1-Y_1^{\ast}, \ \ \ \ \ \ Z_2=Y_2-Y_2^{\ast} +2t Y_1^{\ast}, $$
where $t=i\delta /\delta'$. By (\ref{unimodular}), Lemma \ref{lemma4} and (\ref{eq:lemmaB}), we see that:  
\begin{equation} \label{unimodularB}
{\mathfrak g} \ \,\mbox{is unimodular }  \ \ \Longleftrightarrow \ \ \, \mbox{tr}(Z_1)= \, \mbox{tr}(Z_2)+\frac{1}{\sqrt{2}\delta'}i(c+d')=0.
\end{equation}
Also, under the assumption that ${\mathfrak g}$ is unimodular, the metric $g$ will be balanced if and only if $\sum_{s=1}^n D^s_{as}=0$ for all $1\leq a\leq n$. From  Lemma \ref{lemma4} and (\ref{eq:lemmaB}), we see that this is equivalent to
$$ v^1_1+v^2_2=0, \ \ \ a+c'=0, \ \ \ b+d'=0. $$
Recall that $v^{\alpha}_{\beta}= (D^{\alpha}_{i\beta})_{\{3\leq i\leq n\}}$. Plug into (\ref{abcdB}), we get $a=c'=0$ and $c=d'=-b$. In summary, we have

\begin{lemma} \label{lemmaB2}
Let $({\mathfrak g}, J)$ be as in Lemma \ref{lemmaB}, with  ${\mathfrak g}$ unimodular. Then a Hermitian metric $g$ on  $({\mathfrak g}, J)$ is balanced if and only if
\begin{equation*}
v^1_1+v^2_2=0, \ \ \  A_x = \left[ \begin{array}{cc} 0 & b \\ -b & 0 \end{array}\right], \ \ \ A_y = \left[ \begin{array}{cc} -b & 0 \\ 0 & -b  \end{array}\right]
\end{equation*}
In particular, the rank $r_0$ of $({\mathfrak g},J)$ is either $0$ or $2$, depending on whether $b=0$ or not. Also, when $r_0=2$, the sign $\det (A_x) >0$. 
\end{lemma}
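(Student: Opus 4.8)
The plan is to verify the stated characterization of balanced metrics directly from the formulas for the structure constants in Lemma \ref{lemmaB}, exactly in the spirit of the computation already done in Section 4 for the $\sigma \neq 0$ case, but now setting $\sigma = 0$ and allowing $A_x$ to be non-nilpotent. Concretely, by Lemma \ref{lemma1} and Lemma \ref{lemma4}, the metric $g$ is balanced if and only if $D^1_{a1} + D^2_{a2} = 0$ for every index $a$ with $1 \le a \le n$. For $a \ge 3$ this reads $v^1_1 + v^2_2 = 0$ (using the notation $v^\alpha_\beta = (D^\alpha_{i\beta})_{3\le i\le n}$). For $a = 1$ and $a = 2$ we substitute the explicit entries of $E_1$ and $E_2$ from \eqref{eq:lemmaB}: using $D^1_{11} = \frac{1}{\sqrt{2}}(b\delta + ia)$, $D^1_{21} = \frac{i b\delta'}{\sqrt{2}}$, $D^2_{12} = \frac{1}{\sqrt{2}\delta'^2}(\delta(d'+b\delta^2) + i(c'+a\delta^2))$, $D^2_{22} = \frac{i}{\sqrt{2}\delta'}(d'+b\delta^2)$, together with $\delta^2 + \delta'^2 = 1$. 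The vanishing of $D^1_{21} + D^2_{22}$ forces $d' + b = 0$ (after simplification $\frac{i}{\sqrt{2}\delta'}(b\delta^2 \delta'^{-1}\cdot\delta' + \ldots)$ collapses cleanly since $\delta^2 + \delta'^2 = 1$), and then the vanishing of $D^1_{11} + D^2_{12}$ forces $a + c' = 0$. So the balanced condition is equivalent to $v^1_1 + v^2_2 = 0$, $d' = -b$, and $c' = -a$.

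Next I would feed the relations $c' = -a$ and $d' = -b$ back into the commutativity equations \eqref{abcdB}, which encode $[A_x, A_y] = 0$ under the normalization $\mathrm{tr}(A_x) = 0$ (so $d = -a$). The first equation $ac + bc' = 0$ becomes $ac - ab = 0$, i.e. $a(c - b) = 0$. The second equation $b(d' - c) = 2a^2$ becomes $b(-b - c) = 2a^2$. The third equation $c(d' - c) = -2ac'$ becomes $c(-b - c) = 2a^2$. Subtracting the third from the second gives $(b - c)(-b - c) = 0$, hence $b = c$ or $b = -c$; combined with $a(c-b) = 0$, a short case analysis (if $b = -c$ with $b \neq c$ then $a = 0$, and then $b(-b-c) = b(-b+b) = 0 = 2a^2$ is consistent but forces... ) yields $a = 0$ and $c = -b$. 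I expect the clean outcome to be $a = c' = 0$, $c = d' = -b$, so that $A_x = \begin{bmatrix} 0 & b \\ -b & 0 \end{bmatrix}$ and $A_y = \begin{bmatrix} -b & 0 \\ 0 & -b \end{bmatrix}$, precisely as claimed. The minor obstacle here is making sure the case analysis of \eqref{abcdB} is exhaustive and that no spurious branch (e.g. $b \ne 0$ but $a \ne 0$) survives; I would handle it by observing $2a^2 = b(-b-c) = c(-b-c)$ forces $(b-c)(b+c) = 0$ and then eliminating the $b = -c$ branch via $a(c-b) = 0$ and $2a^2 = 0$.

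Finally, the statements about the rank $r_0$ and the sign of $\det(A_x)$ follow immediately: the image of $\mathrm{ad}_{Jx}$ and $\mathrm{ad}_{Jy}$ in $V \cong \mathfrak{a}/\mathfrak{a}_J$ is spanned by the columns of $A_x$ and $A_y$, which by the above is the zero subspace when $b = 0$ (so $r_0 = 0$) and all of $V$ when $b \neq 0$ (so $r_0 = 2$), since $A_y = -b\,I$ is invertible. And when $b \neq 0$, $\det(A_x) = b^2 > 0$. I would also remark, for consistency with the earlier Remark, that in the $r_0 = 2$ case the normalization $\mathrm{tr}(A_x) = 0$ pins down $x$ up to scaling, so the sign of $\det(A_x)$ is an invariant of $J$ alone; here that invariant is forced to be positive by the existence of a balanced metric. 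The whole argument is routine linear algebra once \eqref{eq:lemmaB} and \eqref{abcdB} are in hand; the only place requiring care is the bookkeeping of $\delta, \delta'$ factors when simplifying $D^1_{11} + D^2_{12}$ and $D^1_{21} + D^2_{22}$, which I would do by clearing denominators and using $\delta^2 + \delta'^2 = 1$ at the end.
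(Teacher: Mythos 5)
Your proposal is correct and follows essentially the same route as the paper: read off the balanced condition $\sum_k D^k_{ak}=0$ via Lemma \ref{lemma1} and Lemma \ref{lemma4}, use the entries in (\ref{eq:lemmaB}) together with $\delta^2+\delta'^2=1$ to reduce it to $v^1_1+v^2_2=0$, $a+c'=0$, $b+d'=0$, and then feed these into the commutativity relations (\ref{abcdB}) to force $a=c'=0$, $c=d'=-b$. Your case analysis of (\ref{abcdB}) (the $b=c$ branch collapsing to $a=b=c=0$ and the $b=-c$ branch forcing $a=0$) is exactly the verification the paper leaves implicit, and the rank and determinant conclusions follow as you say.
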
 

Similarly, the metric $g$ is K\"ahler if Chern torsion vanishes, namely, $T^b_{ac}=-C^b_{ac}-D^b_{ac}+D^b_{ca} =0$ for all $1\leq a,b,c\leq n$. By Lemma \ref{lemma4}, (\ref{abcdB}) and (\ref{eq:lemmaB}), we deduce that

\begin{lemma} \label{lemmaB3}
Let $({\mathfrak g}, J)$ be as in Lemma \ref{lemmaB}, with  ${\mathfrak g}$ unimodular. Then a Hermitian metric $g$ on  $({\mathfrak g}, J)$ is K\"ahler if and only if
\begin{equation*}
Z_1=Z_2=0, \ \ \ \ r_0=0, \ \ \ \ \mbox{and} \ \ \ \ v^{\alpha}_{\beta} =0 \ \ \forall \ 1\leq \alpha, \beta \leq 2.
\end{equation*}
\end{lemma}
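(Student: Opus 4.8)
The plan is to use the fact that, for the Chern connection, the metric $g$ is K\"ahler exactly when the full torsion tensor vanishes. By (\ref{torsion}) one always has $T(e_i,\overline{e}_j)=0$, so $g$ is K\"ahler if and only if $T^k_{ij}=-C^k_{ij}-D^k_{ij}+D^k_{ji}=0$ for all $i,j,k$. We will evaluate these components under an admissible frame by means of Lemma \ref{lemmaB}: the block form (\ref{11}), the explicit entries (\ref{eq:lemmaB}), the $C$--$D$ relations of Lemma \ref{lemma4} (which hold here, with the extra identities $C^1_{12}=C^2_{12}=0$), and then read off which of the quantities $E_\alpha$, $V_\alpha$ (equivalently the $v^\alpha_\beta$), $Y_\alpha$ each structure constant encodes, organizing the discussion by whether the indices lie in $\{1,2\}$ (``small'') or in $\{3,\dots,n\}$ (``big'').

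First I would dispose of the two easy situations. When $i,j\ge 3$, Lemma \ref{lemmaB} forces $C^k_{ij}=D^k_{ij}=D^k_{ji}=0$, so $T^k_{ij}=0$ with no condition. Next, in the mixed case $i=\alpha\in\{1,2\}$, $j=\ell\ge 3$, I use $C^k_{\alpha\ell}=\overline{D^\ell_{k\alpha}}-2t\,\delta_{\alpha2}\,\overline{D^\ell_{k1}}$. For $k\ge 3$ the remaining terms $D^k_{\alpha\ell}$ vanish and $D^k_{\ell\alpha}$ is a $Y_\alpha$-entry, so $T^k_{\alpha\ell}$ becomes the $(k,\ell)$-entry of $Z_1$ when $\alpha=1$ and of $Z_2$ when $\alpha=2$; hence the torsion conditions give $Z_1=Z_2=0$. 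For $k\le 2$ the terms $D^k_{\alpha\ell}$, $D^k_{\ell\alpha}$ drop out (top-right blocks of $D_\ell$ and $D_\alpha$) and $T^k_{\alpha\ell}$ is, up to conjugation, the $\ell$-th entry of $v^k_\alpha$; running over $\ell$ yields $v^\alpha_\beta=0$ for all $\alpha,\beta\in\{1,2\}$. This already produces the three conditions in the statement.

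It then remains to extract the content of $T^k_{12}$. Using $C^k_{12}=0$: for $k\ge 3$ this only reproduces $v^1_2=v^2_1$, which is already known; for $k=1,2$ it becomes $D^k_{21}=D^k_{12}$, two complex relations among the entries of $E_1,E_2$. Substituting (\ref{eq:lemmaB}) and using $\delta^2+\delta'^2=1$, these reduce to the real scalar relations $b=c$ and $c'=-a$ (recall also $d=-a$ by the normalization (\ref{traceA})), together with $\delta(b+d')=0$. Feeding $b=c$, $c'=-a$ into the commutativity identities (\ref{abcdB}) collapses them to the single equation $b(d'-b)=2a^2$; and since $Z_2=0$, the unimodularity criterion (\ref{unimodularB}) forces $c+d'=0$, i.e. $d'=-b$. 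Combining gives $-2b^2=2a^2$, hence $a=b=0$ over $\mathbb{R}$, and then $c=c'=d'=d=0$: that is $A_x=A_y=0$, equivalently $r_0=0$. For the converse, assume $Z_1=Z_2=0$, $r_0=0$ and all $v^\alpha_\beta=0$; then (\ref{eq:lemmaB}) gives $E_1=E_2=0$, so the only surviving $D$-entries are those of $Y_1,Y_2$, and the computation above shows every $T^k_{ij}$ is then either trivially zero or equals an entry of $Z_1$ or $Z_2$, hence vanishes and $g$ is K\"ahler.

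The routine but delicate part of the argument is the bookkeeping of index ranges in the $C$--$D$ relations together with the block decomposition (\ref{11}) --- i.e. correctly matching each structure constant with an entry of $E_\alpha$, $V_\alpha$, or $Y_\alpha$. The only genuinely algebraic step is the collapse $b(d'-b)=2a^2$, $d'=-b$ $\Rightarrow$ $a=b=0$, and this is precisely where unimodularity enters in an essential way: without the hypothesis that ${\mathfrak g}$ is unimodular, the matrices $A_x,A_y$ need not vanish, so $r_0=0$ would fail.
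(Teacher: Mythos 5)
Your proof is correct and follows the same route as the paper's (essentially one-line) argument: vanishing of the Chern torsion, evaluated index-range by index-range via Lemma \ref{lemmaB}, the commutation relations (\ref{abcdB}) and unimodularity (\ref{unimodularB}); your identification of the torsion components with $Z_1,Z_2$, the $v^\alpha_\beta$, and the scalar relations $b=c$, $c'=-a$, $\delta(b+d')=0$, followed by the collapse $-2b^2=2a^2\Rightarrow a=b=0$, all check out. One wording slip only: in the mixed case with $k\le 2$ the surviving term is $D^k_{\ell\alpha}$ itself (an entry of the bottom-left block $V_\alpha$), which \emph{is} the $\ell$-th entry of $v^k_\alpha$ rather than something that ``drops out'' — the terms that vanish there are $C^k_{\alpha\ell}$ and $D^k_{\alpha\ell}$ — but your conclusion is the right one.
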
 

Notice that $r_0=0$ is equivalent to $a=b=c=c'=d'=0$ here. Our next goal would be to get the condition for pluriclosedness, which is characterized by the equation (\ref{SKT}). As in \S 5, by letting $k=\alpha$ and $\ell =\beta$ be in $\{ 1,2\}$ and $i,j\in \{ 3, \ldots , n\}$ in (\ref{SKT}), we end up with (\ref{SKT2}), which in our notation $Z_1$, $Z_2$ is just
\begin{equation} \label{SKTB}
-Z_{\alpha}Z_{\beta} + [Y^{\ast}_{\beta}, Z_{\alpha}] + \overline{D^{\alpha}_{1\beta} }Z_1 +  \overline{ D^{\alpha}_{2\beta} } Z_2 =0, \ \ \ \ 1\leq \alpha , \beta \leq 2. 
\end{equation}
Letting $\alpha =\beta =1$, taking trace, and utilizing (\ref{unimodularB}), we get
$$  -\mbox{tr}(Z_1Z_1) - \overline{D^1_{21} } \frac{1}{\sqrt{2}\delta'} i(c+d') =0. $$
Therefore by (\ref{lemmaB}) we get
$$ \mbox{tr}(Z_1Z_1^{\ast}) = -\mbox{tr}(Z_1Z_1) = \overline{D^1_{21} } \frac{1}{\sqrt{2}\delta'} i(c+d') = -\frac{1}{\sqrt{2}} ib\delta'\frac{1}{\sqrt{2}\delta'} i(c+d') = \frac{1}{2} b(c+d'). $$
By the middle line of (\ref{abcdB}), which is $b(d'-c)=2a^2$, we obtain
$$ -2\det (A_x) = 2bc + 2a^2 = 2bc + b(d'-c) = b(c+ d') = 2\mbox{tr}(Z_1Z_1^{\ast}) \geq 0,$$
therefore we  get the following:

\begin{lemma} \label{lemmaB4}
Let $({\mathfrak g}, J)$ be as in Lemma \ref{lemmaB}, with  ${\mathfrak g}$ unimodular. If a Hermitian metric $g$ on  $({\mathfrak g}, J)$ is pluriclosed, then
\begin{equation*}
\det (A_x) \leq 0 
\end{equation*}
for any $x\in {\mathfrak a} \setminus {\mathfrak a}_J$ with $\mbox{tr}(A_x)=0$. 
\end{lemma}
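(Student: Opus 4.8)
The proof rests on the trace of the pluriclosed relation~(\ref{SKTB}) in the case $\alpha=\beta=1$, essentially isolated already in the computation preceding the statement; the remaining work is to organize that computation and to pass from the distinguished $x$ of an admissible frame to an arbitrary $x\in{\mathfrak a}\setminus{\mathfrak a}_J$ with $\mbox{tr}(A_x)=0$.

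First I would carry out this reduction. The matrix $A_x$ represents $\mbox{ad}_{Jx}$ acting on $V\cong{\mathfrak a}/{\mathfrak a}_J$, so it depends only on the class of $x$ modulo ${\mathfrak a}_J$ and is linear in $x$; moreover $[Jx,{\mathfrak a}]\subseteq[{\mathfrak g},{\mathfrak g}]\cap{\mathfrak a}$ because ${\mathfrak a}$ is an abelian ideal, hence $\Ran(A_x)\subseteq({\mathfrak g}'+{\mathfrak a}_J)/{\mathfrak a}_J$. Consequently, if $r_0\le 1$ then $A_x$ has rank at most one, so $\det(A_x)=0$ and the asserted inequality is immediate. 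If $r_0=2$, the Remark after~(\ref{traceA}) shows that the condition $\mbox{tr}(A_x)=0$ determines $x$ modulo ${\mathfrak a}_J$ up to a nonzero real multiple; since $\det(A_x)$ then scales by the square of that multiple, it suffices to prove $\det(A_x)\le 0$ for the $x$ entering one fixed admissible frame $e$, which I henceforth assume given.

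Next I would take the trace of~(\ref{SKTB}) with $\alpha=\beta=1$. The commutator term $[Y_1^{\ast},Z_1]$ is traceless, and unimodularity in the form~(\ref{unimodularB}) gives $\mbox{tr}(Z_1)=0$ and $\mbox{tr}(Z_2)=-\tfrac{i}{\sqrt2\,\delta'}(c+d')$; inserting $\overline{D^1_{21}}=-\tfrac{i}{\sqrt2}b\delta'$ from~(\ref{eq:lemmaB}) then gives $\mbox{tr}(Z_1^2)=-\tfrac12 b(c+d')$. Since $Z_1=Y_1-Y_1^{\ast}$ is skew-Hermitian we have $-\mbox{tr}(Z_1^2)=\mbox{tr}(Z_1Z_1^{\ast})\ge 0$, so $b(c+d')\ge 0$. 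Finally the middle relation $b(d'-c)=2a^2$ of~(\ref{abcdB}) lets me rewrite $b(c+d')=2a^2+2bc$; combining this with $\det(A_x)=\det\!\left[\begin{array}{cc}a&b\\c&-a\end{array}\right]=-a^2-bc$ gives $-2\det(A_x)=2\,\mbox{tr}(Z_1Z_1^{\ast})\ge 0$, that is, $\det(A_x)\le 0$.

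I do not expect a serious obstacle: all the manipulations are linear algebra, and the only genuinely conceptual point is the first paragraph — ensuring the conclusion holds for every traceless $A_x$ and not just for the one attached to a chosen admissible frame — which is precisely why I split off the $r_0\le 1$ case and appeal to the uniqueness statement in the Remark when $r_0=2$.
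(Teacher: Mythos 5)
Your argument is correct and is essentially the paper's own proof: both hinge on taking the trace of the pluriclosed identity (\ref{SKTB}) at $\alpha=\beta=1$, using unimodularity (\ref{unimodularB}) and $\overline{D^1_{21}}$ from (\ref{eq:lemmaB}) to get $2\,\mbox{tr}(Z_1Z_1^{\ast})=b(c+d')$, and then invoking $b(d'-c)=2a^2$ from (\ref{abcdB}) to identify this with $-2\det(A_x)\ge 0$. Your opening reduction (splitting off $r_0\le 1$ and appealing to the Remark when $r_0=2$) just makes explicit the passage from the frame's distinguished $x$ to an arbitrary traceless $x$, which the paper leaves implicit.
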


Now suppose that $J{\mathfrak a}\neq {\mathfrak a}$ and ${\mathfrak g}/{\mathfrak a}$ is abelian. Assume that $({\mathfrak g},J)$ admits  both a balanced metric and a pluriclosed metric. By Lemma \ref{lemmaB2} we know that we must have $r_0=0$ or $r_0=2$, and in the $r_0=2$ case we must have $\det (A_x)>0$. On the other hand, the presence of a pluriclosed metric would imply $\det (A_x)\leq 0$ by  Lemma \ref{lemmaB4}. By Remark 2, we know that when $r_0=2$ the sign of $\det (A_x)$ is uniquely determined by $J$, thus the same $J$ cannot simultaneously admit both types of metrics. So in order to verify the Fino-Vezzoni Conjecture, we may restrict ourselves to the $r_0=0$ case. 

{\em For now on, we will assume that $r_0=0$, which means $D^{\beta}_{\alpha \gamma }=0$ for all $1\leq \alpha , \beta , \gamma \leq 2$.} In this case, the only possibly non-zero $D$ components  are 
$$Y_1=(D^j_{i1}), \ \ \ Y_2=(D^j_{i2}), \ \ \ v^{\alpha}_{\beta}=(D^{\alpha}_{i\beta}), \ \ \ \ 1\leq \alpha , \beta \leq 2; \ \ \ 3\leq i,j\leq n, $$
satisfying
\begin{equation} \label{r0=0}
[Y_1,Y_2]=0, \ \ \ \ \ Y_1v^{\alpha}_2=Y_2v^{\alpha}_1 \ \ \ (1\leq \alpha \leq 2),
\end{equation}
and the only possibly non-zero $C$ components are
$$ C_{\alpha} =(C^j_{\alpha i})= Y_{\alpha}^{\ast} - 2t \delta_{\alpha 2} Y_1^{\ast}, \ \ \ \ (C^i_{12})=\overline{v^2_1}- \overline{v^1_2} + 2t \overline{v^1_1}. $$

Now let us assume that the metric $g$ is pluriclosed. Since all $D^{\beta}_{\alpha \gamma }=0$, by  taking trace in (\ref{SKTB}), we get
$$ \mbox{tr} ( Z_{\alpha} Z_{\beta})=0. $$
In particular, $\mbox{tr} ( Z_1Z_1^{\ast})= -\mbox{tr} ( Z_1Z_1)=0$ leads to $Z_1=0$. So $Y_1=Y_1^{\ast}$ is Hermitian symmetric, hence $Z_2^{\ast}=-Z_2$ as $t=i\delta /\delta'$ is pure imaginary. This leads to $Z_2=0$ as well. 

In order to prove the Fino-Vezzoni Conjecture for this $r_0=0$ case, we will need to squeeze out more information from (\ref{SKT}). By letting $i=k=1$ and $j=\ell =2$ in (\ref{SKT}), we obtain
\begin{equation} \label{SKTv}
|v^2_1|^2 + |v^1_2|^2 + |v^2_1-v^1_2-2t v^1_1|^2 - 2\mbox{Re} \langle v^1_1, \overline{v^2_2} \rangle = 0. 
\end{equation}
Also, if we let $j=1$, $\ell =2$, $k=\alpha \in \{ 1,2\}$ and $3\leq i\leq n$ in (\ref{SKT}), we get
\begin{equation} \label{SKTvY}
Y_1^{\ast} v^2_{\alpha} = Y^{\ast}_2 v^1_{\alpha} \ \ \ \ (1\leq \alpha \leq 2). 
\end{equation}
In summary, we have the following:

\begin{lemma} \label{lemmaB5}
Let $({\mathfrak g}, J)$ be as in Lemma \ref{lemmaB}, with  ${\mathfrak g}$ unimodular and the rank of $({\mathfrak g}, J)$ be  $r_0=0$. If a Hermitian metric $g$ on  $({\mathfrak g}, J)$ is pluriclosed, then $Z_1=Z_2=0$, and the equations (\ref{r0=0}), (\ref{SKTv}) and (\ref{SKTvY})
hold. 
\end{lemma}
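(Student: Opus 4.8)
The plan is to derive every assertion of the lemma from the pluriclosedness equation (\ref{SKT}), specialized to an admissible frame in the $r_0=0$ regime, using the explicit structure-constant formulas of Lemma \ref{lemmaB} together with the Jacobi identity.

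First I would dispose of (\ref{r0=0}), which in fact needs no metric hypothesis. In the ${\mathfrak g}/{\mathfrak a}$ abelian case one has $\sigma=0$, and under an admissible frame $C^1_{12}=C^2_{12}=0$, so the Jacobi identity (\ref{Jacobi}) collapses to $[D_1,D_2]=0$. Writing $D_\alpha$ in the block form (\ref{11}) and using that $r_0=0$ forces $E_1=E_2=0$, the identity $[D_1,D_2]=0$ reads exactly $[Y_1,Y_2]=0$ together with $Y_1v^\alpha_2=Y_2v^\alpha_1$ for $\alpha=1,2$, which is (\ref{r0=0}).

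Next, assuming $g$ pluriclosed, I would invoke the reduction already recorded before the lemma: letting $k=\alpha$ and $\ell=\beta$ range over $\{1,2\}$ and $i,j$ over $\{3,\dots,n\}$ in (\ref{SKT}) produces (\ref{SKT2}), i.e. (\ref{SKTB}); since $r_0=0$ kills $D^\alpha_{1\beta}$ and $D^\alpha_{2\beta}$, this simplifies to $-Z_\alpha Z_\beta+[Y^\ast_\beta,Z_\alpha]=0$. Taking traces removes the commutator and leaves $\mathrm{tr}(Z_\alpha Z_\beta)=0$ for all $\alpha,\beta$. Now $Z_1=Y_1-Y_1^\ast$ is skew-Hermitian, so $\mathrm{tr}(Z_1Z_1^\ast)=-\mathrm{tr}(Z_1^2)=0$ forces $Z_1=0$, hence $Y_1=Y_1^\ast$. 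Since $t=i\delta/\delta'$ is purely imaginary (so $\bar t=-t$), the matrix $Z_2=Y_2-Y_2^\ast+2tY_1^\ast$ is then also skew-Hermitian, and $\mathrm{tr}(Z_2Z_2^\ast)=-\mathrm{tr}(Z_2^2)=0$ gives $Z_2=0$ as well.

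Finally I would extract (\ref{SKTv}) and (\ref{SKTvY}) by feeding the remaining index ranges into (\ref{SKT}): taking $\{i,k\}$ and $\{j,\ell\}$ inside $\{1,2\}$ and substituting $D^\beta_{\alpha\gamma}=0$ along with $C^i_{12}=\overline{v^2_1}-\overline{v^1_2}+2t\,\overline{v^1_1}$ (and, where convenient, the identities $Z_1=Z_2=0$ just obtained) yields the scalar relation (\ref{SKTv}); taking $j=1$, $\ell=2$, $k=\alpha\in\{1,2\}$ and $3\le i\le n$ yields the vector relations (\ref{SKTvY}). I expect the only delicate point to be the index bookkeeping in this last step --- tracking which $C$ and $D$ components are forced to vanish in each index regime and repackaging the resulting bilinear sums into the compact matrix/vector notation --- rather than anything conceptual; the trace argument giving $Z_1=Z_2=0$ is the heart of the matter and is immediate once (\ref{SKTB}) is available.
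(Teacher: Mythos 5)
Your proposal is correct and follows essentially the same route as the paper: equation (\ref{r0=0}) comes from the Jacobi identity $[D_1,D_2]=0$ in block form (no metric needed), $Z_1=Z_2=0$ comes from tracing (\ref{SKTB}) with all $D^{\beta}_{\alpha\gamma}=0$ and the skew-Hermitian observation, and (\ref{SKTv}), (\ref{SKTvY}) come from the remaining index specializations of (\ref{SKT}). The only cosmetic difference is that the paper obtains (\ref{SKTv}) by direct substitution without invoking $Z_1=Z_2=0$.
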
 

Note that in this case the pluriclosed metric $g$ may not be K\"ahler, as the $v$ part might not vanish. Analogous to the situation in Theorem \ref{thm1}, we will show that if $({\mathfrak g}, J)$ admits another metric $\tilde{g}$ which is balanced, then those $v^{\alpha}_{\beta}$ for the pluriclosed metric $g$ will fall inside the combined range of $Y_1$ and $Y_2$, thus by an appropriate modification of $g$ we will be able to produce a K\"ahler metric on $({\mathfrak g}, J)$. To achieve that goal, we will need to sort out the relationship between the two sets of strucutre constants for any two given metrics on  $({\mathfrak g}, J)$.

Let $({\mathfrak g}, J)$ be as in Lemma \ref{lemmaB} with ${\mathfrak g}$ unimodular. Assume that $r_0=0$ and $g$, $\tilde{g}$ are Hermitian metrics on $({\mathfrak g}, J)$. We claim that there exist $g$-admissible frame $e$ and $\tilde{g}$-admissible frame $\tilde{e}$ so that
\begin{equation} \label{framechange}
 \tilde{e}_1= p_1e_1 + \sum_{k=3}^n a_k e_k, \ \ \   \tilde{e}_2= \mu e_1+ p_2e_2 + \sum_{k=3}^n b_k e_k, \ \ \  \tilde{e}_i= p_ie_i  \ \ \ (3\leq i\leq n),
\end{equation}
where $p_1, \ldots , p_n$ are positive constants, and $\mu$, $a_k$, $b_k$ are complex constants. 

To see this, let us start with any $g$-admissible frame $e$. By an appropriate unitary change of $\{ e_3, \ldots , e_n\}$, we may assume that $\{ p_3e_3, \ldots , p_n e_n\}$ is $\tilde{g}$-unitary for some positive constants $p_3, \ldots , p_n$. On the other hand, since $r_0=0$, any orthonormal basis $\{ x,y\}$ of the orthogonal complement $V={\mathfrak a}_J^{\perp} \cap {\mathfrak a}$ will work in forming an admissible frame. In particular, we may choose $\{ \tilde{x}, \tilde{y}\}$ so that $\tilde{x}$ lies in ${\mathbb R}x+{\mathfrak a}_J$. Thus the resulting  admissible frames will be related by the above equations. Using (\ref{framechange}) and the definition
$$[\overline{e}_i,e_j] = \sum_{k=1}^n \{ D^i_{kj} \overline{e}_k -  \overline{D^j_{ki} } e_k \} \ \ \ \ (1\leq i,j\leq n), $$
a straight-forward computation yields the following:
\begin{equation} \label{framechange2}
\left\{ \begin{array}{llllll}  \tilde{Y}_1 = P^{-1}(p_1Y_1)P, &&&&& \\
\tilde{Y}_2 = P^{-1}(\mu Y_1+p_2Y_2)P, &&&&& \\
\tilde{v}^1_1 = P^{-1}(p_1^2v^1_1 + p_1Y_1\overline{a}), &&&&&\\
\tilde{v}^2_1 = P^{-1}(p_1p_2v^2_1 + p_1\overline{\mu} v^1_1 - p_1Y_1\overline{b}), &&&&&\\
\tilde{v}^1_2 = P^{-1}\{ p_1p_2v^1_2 + p_1\mu  v^1_1 + (\overline{\mu }Y_1 + p_2Y_2) \overline{a}\}, &&&&&\\
\tilde{v}^2_2 = P^{-1}\{ p_2^2 v^2_2 + p_2\mu  v^2_1 + p_2\overline{\mu }v^1_2 + |\mu |^2 v^1_1  + (\mu Y_1+p_2Y_2) \overline{b}\} ,
\end{array} \right.
\end{equation}
where $P=\mbox{diag}\{ p_3, \ldots , p_n\}$, $a=\,^t\!(a_3, \ldots , a_n)$, $b=\,^t\!(b_3, \ldots , b_n)$. 

Let us write $U={\mathfrak a}_J\otimes {\mathbb C} \cong {\mathbb C}^{n-2}$. Since $Y_1$ is Hermitian symmetric, it can be diagonalized. Also, both $Y_2$ and $Y_2^{\ast}$ commute with $Y_1$ and $Z_2=0$, thus $Y_1$ and $Y_2$ can be simultaneously diagonalized. Let us denote the  sum space of the ranges of $Y_1$ and $Y_2$ by $W\subseteq U$, and its orthogonal complement in $U$ by $W^{\perp}$. By a generic $SO(2)$ change of $\{ x,y\}$, we may assume that both $Y_1$ and $Y_2$ are of rank $s=\dim_{\mathbb C}(W)$. So $W^{\perp}$ is the kernel space for both $Y_1$ and $Y_2$. 

For any vector $w\in U$, we will write $w=w'+w^{\perp}$ for the decomposition in $U=W\oplus W^{\perp}$. Since $Y_1$, $Y_2$ are non-degenerate on $W$, by (\ref{r0=0}) and (\ref{SKTvY}), we get in $W$ that
$$ (v^1_2)' = A\xi , \ \ \ \ \ (v^2_1)' =A^{\ast} \xi , \ \ \ \ \  (v^2_2)' = AA^{\ast}\xi  ,$$
where $\xi = (v^1_1)'$ and $A=(Y_1|_W)^{-1}(Y_2|_W)$.  Since $Z_1=Z_2=0$, we have $Y_1^{\ast}=Y_1$ and $Y_2^{\ast} = Y_2+2tY_1$, hence $Y_2Y_2^{\ast} = Y_2^{\ast} Y_2$, so $AA^{\ast} = A^{\ast}A$. Therefore
$$ 2\mbox{Re} \langle (v^1_1)', \overline{(v^2_2)'} \rangle  = 2|A\xi |^2 = 2|A^{\ast} \xi |^2. $$
This means that in the formula (\ref{SKTv}), the $W$-portion of the left hand side  is non-negative, thus the $W^{\perp}$-portion of the left hand side needs to be non-positive, yielding
\begin{equation} \label{perp}
|(v^2_1)^{\perp}|^2 + |(v^1_2)^{\perp}|^2 - 2\mbox{Re} \langle (v^1_1)^{\perp}, \overline{(v^2_2)^{\perp}} \rangle  \leq 0. 
\end{equation}
Now since $\tilde{g}$ is a balanced metric, we have by Lemma \ref{lemmaB2} that $\tilde{v}^1_1+ \tilde{v}^2_2=0$. By (\ref{framechange2}), this means that
$$ p_1^2v^1_1 + p_1Y_1\overline{a} + p_2^2 v^2_2 + p_2\mu v^2_1 + p_2\overline{\mu} v^1_2 + |\mu |^2 v^1_1 + (\mu Y_1+p_2Y_2)\overline{b} = 0. $$
Taking the $W^{\perp}$-part, we get
$$ (v^2_2)^{\perp} = - \frac{p_1^2+|\mu |^2}{p_2^2} (v^1_1)^{\perp} - \frac{|\mu|}{p_2} (\rho (v^2_1)^{\perp} + \overline{\rho }(v^1_2)^{\perp}), $$
where we wrote $\mu = |\mu |\rho$. Write $w=\rho (v^2_1)^{\perp} + \overline{\rho }(v^1_2)^{\perp} $ and plug it into (\ref{perp}), we get
\begin{equation} \label{eq:ineq}
 |(v^2_1)^{\perp}|^2 + |(v^1_2)^{\perp}|^2 + 2\frac{p_1^2+|\mu|^2 }{p_2^2 }|(v^2_1)^{\perp}|^2 + 2\frac{|\mu|}{p_2}\mbox{Re} \langle (v^1_1)^{\perp}, \overline{w} \rangle  \leq 0 . 
\end{equation}
If $\mu=0$, the above leads to all $(v^{\alpha}_{\beta})^{\perp}=0$ already. Assume $\mu \neq 0$, replace the last term on the left hand side by 
$$ 2\mbox{Re} \langle (v^1_1)^{\perp}, \overline{w} \rangle \geq - \frac{2|\mu|}{p_2} |(v^1_1)^{\perp}|^2 - \frac{p_2}{2|\mu|} |w|^2, $$
the inequality (\ref{eq:ineq}) becomes 
$$  |(v^2_1)^{\perp}|^2 + |(v^1_2)^{\perp}|^2 - \frac{1}{2}|w|^2 + 2\frac{p_1^2}{p_2^2 }|(v^1_1)^{\perp}|^2  \leq 0. $$
Clearly this gives us $(v^1_1)^{\perp} = 0$, and by (\ref{eq:ineq}) again we get $(v^{\alpha}_{\beta})^{\perp}=0$ for all $\alpha, \beta \in \{ 1,2\}$. In summary, the presence of a balanced metric allows us to conclude that $(v^{\alpha}_{\beta})^{\perp}=0$ for all $1\leq \alpha, \beta \leq 2$. 

Finally, let us modify our pluriclosed metric $g$ into a K\"ahler metric. Let us choose $\mu =0$, $p_1=\cdots =p_n=1$,  and choose $a$ and $b$ so that
$$ v^1_1+Y_1\overline{a}=0, \ \ \ v^2_1-Y_1 \overline{b}=0. $$
In other words we form a new metric (which we still denote as $\tilde{g}$ for convenience) such that $\tilde{e}_1=e_1+\sum_k a_ke_k$, $\tilde{e}_2=e_2+\sum_kb_ke_k$, $\tilde{e}_i=e_i$ ($3\leq i\leq n$) form its unitary frame. By  the coefficient change formula (\ref{framechange2}), we get $\tilde{Y}_1=Y_1$, $\tilde{Y}_2=Y_2$, and 
$$  \tilde{v}^1_1 = \tilde{v}^2_1=0. $$
By the second part of (\ref{r0=0}), which is valid for any Hermitian metric on $({\mathfrak g},J)$, we know that $ \tilde{v}^1_2 = \tilde{v}^2_2=0$ as well. So the metric $\tilde{g}$ satisfies $\tilde{Z}_1=\tilde{Z}_2=0$ and $\tilde{v}=0$, hence is K\"ahler by Lemma \ref{lemmaB3}. Now we have completed the proof of Theorem \ref{thm2} in the $J{\mathfrak a}\neq {\mathfrak a}$ and ${\mathfrak g}/{\mathfrak a}$ abelian case.

\vspace{0.3cm}

\vs

\noindent\textbf{Acknowledgments.} The second named author would like to thank Bo Yang and Quanting Zhao for their interests and/or helpful discussions. We would also like to take this opportunity to thank the referee for a number of valuable corrections and suggestions, which improved the readability and completeness of the paper.

\vs

\end{document}